\begin{document}

\newtheorem{definition}{Definition}[subsection]
\newtheorem{definitions}[definition]{Definitions}
\newtheorem{deflem}[definition]{Definition and Lemma}
\newtheorem{lemma}[definition]{Lemma}
\newtheorem{pro}[definition]{Proposition}
\newtheorem{theorem}[definition]{Theorem}
\newtheorem{cor}[definition]{Corollary}
\newtheorem{cors}[definition]{Corollaries}
\theoremstyle{remark}
\newtheorem{remark}[definition]{Remark}
\theoremstyle{remark}
\newtheorem{remarks}[definition]{Remarks}
\theoremstyle{remark}
\newtheorem{notation}[definition]{Notation}
\theoremstyle{remark}
\newtheorem{example}[definition]{Example}
\theoremstyle{remark}
\newtheorem{examples}[definition]{Examples}
\theoremstyle{remark}
\newtheorem{dgram}[definition]{Diagram}
\theoremstyle{remark}
\newtheorem{fact}[definition]{Fact}
\theoremstyle{remark}
\newtheorem{illust}[definition]{Illustration}
\theoremstyle{remark}
\newtheorem{rmk}[definition]{Remark}
\theoremstyle{definition}
\newtheorem{que}[definition]{Question}
\theoremstyle{definition}
\newtheorem{ques}[definition]{Questions}
\theoremstyle{definition}
\newtheorem{conj}[definition]{Conjecture}
\newtheorem{por}[definition]{Porism}

\newcommand\A{\mathcal{A}}
\newcommand\B{\mathcal{B}}
\newcommand\C{\mathcal{C}}
\newcommand\D{\mathcal{D}}
\newcommand\E{\mathcal{E}}
\newcommand\F{\mathcal{F}}
\newcommand\G{\mathcal{G}}
\newcommand\J{\mathcal{J}}
\newcommand\K{\mathcal{K}}
\newcommand\LL{\mathcal{L}}
\newcommand\M{\mathcal{M}}
\newcommand\T{\mathcal{T}}
\newcommand\U{\mathcal{U}}
\newcommand\V{\mathcal{V}}

\newcommand\EX{\mathbb{EX}}
\newcommand\REG{\mathbb{REG}}
\newcommand\DEF{\mathbb{DEF}}

\newcommand\ABEX{\mathbb{ABEX}}
\newcommand\COH{\mathbb{COH}}
\newcommand\DEFA{\mathbb{DEF}^+}

\newcommand\Ab{\mathrm{Ab}}
\newcommand\Ex{\mathrm{Ex}}
\newcommand\Abs{\mathrm{Abs}}
\newcommand\Mod[1]{\operatorname{Mod}\mbox{-}#1}
\newcommand\fpmod[1]{\mathrm{mod}\mbox{-}#1}
\newcommand\Modl[1]{#1\mbox{-}\operatorname{Mod}}
\newcommand\fpmodl[1]{#1\mbox{-}\mathrm{mod}}

\newcommand\Set{\operatorname{Set}}
\newcommand\Cat{\operatorname{Cat}}
\newcommand\Reg{\operatorname{Reg}}
\newcommand\Fun{\operatorname{Fun}}
\newcommand\fun{\operatorname{fun}}
\newcommand\Pts{\operatorname{Pts}}
\newcommand\reg[1]{#1^{\operatorname{reg}}}
\newcommand\fp[1]{#1_{\operatorname{fp}}}
\newcommand\PSh[1]{\operatorname{PSh}(#1)}
\newcommand\Sh[2]{\operatorname{Sh}(#1,#2)}
\newcommand\Str{\operatorname{Str}}
\newcommand\Lex{\operatorname{Lex}}
\newcommand{\colim}{\operatorname{colim}}

\newcommand\Ind{\operatorname{Ind}}
\newcommand\Id{\operatorname{Id}}

\renewenvironment{proof}{\noindent {\bf{Proof.}}}{\hspace*{3mm}{$\Box$}{\vspace{9pt}}}
\title{Definable Categories}
\author{Amit Kuber}
\address{
A. Kuber\newline
Department of Mathematics and Statistics\newline
Masaryk University, Faculty of Sciences\newline
Kotl\'{a}\v{r}sk\'{a} 2, 611 37 Brno, Czech Republic
}
\email{expinfinity1@gmail.com}
\author{Ji\v{r}\'{i} Rosick\'{y}}
\address{
J. Rosick\'{y}\newline
Department of Mathematics and Statistics\newline
Masaryk University, Faculty of Sciences\newline
Kotl\'{a}\v{r}sk\'{a} 2, 611 37 Brno, Czech Republic
}
\email{rosicky@math.muni.cz}
\thanks{Both authors were supported by the Grant Agency of the Czech Republic under the grant P201/12/G028.}
\keywords{definable category, exact category, injectivity, regular, duality}
\subjclass[2010]{18C35, 18E10, 03G30, 18C10}

\begin{abstract}
We introduce the notion of a definable category--a category equivalent to a full subcategory of a locally finitely presentable category that is closed under products, directed colimits and pure subobjects. Definable subcategories are precisely the finite-injectivity classes. We prove a $2$-duality between the $2$-category of small exact categories and the $2$-category of definable categories, and provide a new proof of its additive version. We further introduce a third vertex of the $2$-category of regular toposes and show that the diagram of $2$-(anti-)equivalences between three $2$-categories commutes; the corresponding additive triangle is well-known.
\end{abstract} 
\maketitle

\section{Introduction}
This paper belongs to the realm of categorical logic where one studies the interplay between syntax and semantics using the language of category theory, that is usually presented in the form of a dual adjunction as below: 
\begin{equation}\label{SynSemAdj}
(-)\mbox{-}\mathrm{Mod}(\Set):\mathrm{Theories}^{op}\leftrightarrows\mathrm{Exactness\ properties}:\mathrm{Exact}(-,\Set)
\end{equation}
In Makkai's terminology, for a given fragment of first-order logic,  such an adjunction is a consequence of a `logical doctrine', namely the fact that certain limits, colimits or combinations thereof commute with/distribute over others in the base category $\Set$ of sets. The above notion of `exactness property' on the semantics side precisely captures these commutativity/distributivity conditions, and the exact functors preserve all combinations of limits and colimits present in the exact categories.

For lex (i.e., finitely complete) categories, the Gabriel-Ulmer duality \cite{GU} is a full $2$-duality between the $2$-category $\mathrm{LEX}$ of small lex (i.e., finitely complete) categories, lex functors and natural transformations on the one hand, and the $2$-category $\mathrm{LFP}$ of locally finitely presentable categories, finitary right adjoint functors and natural transformations on the other:
\begin{equation*}
\Lex(-,\Set):\mathrm{LEX}^{op}\leftrightarrows\mathrm{LFP}:\fp{(-)}
\end{equation*}
Using \cite[Corollary~4.7]{ALR} it can be shown that, for a locally finitely presentable category $\K$, a functor $F:\K\to\Set$ preserves all limits and directed colimits (in notation, $F\in(\K,\Set)^{\lim\to}$) iff there is $K\in\fp\K$ such that $F\simeq\K(K,-)$ (also see \cite[p.101]{MakStone}). Thus the above duality can be rewritten in the following form:
\begin{equation}\label{CartDual}
\Lex(-,\Set):\mathrm{LEX}^{op}\leftrightarrows\mathrm{LFP}:(-,\Set)^{\lim\to}
\end{equation}

Cartesian logic is the internal logic of lex categories (see \cite[Definition~D1.3.4]{Ele} for the full syntactic definition). For this fragment of first-order logic the $2$-adjunction of \eqref{SynSemAdj} restricts to the full $2$-duality of \eqref{CartDual}. In order to achieve this, one needs to characterize the categories in the image of the $2$-functors in the adjunction given by \eqref{SynSemAdj}. The `logical doctrine' for cartesian logic states that finite limits commute with all limits as well as with directed colimits in $\Set$. The `exactness property' for this logic is captured by precontinuous categories of \cite{ALR}, namely the categories containing all limits and directed colimits in which directed colimits commute with finite limits and products distribute over directed colimits. The locally finitely presentable categories which are models of cartesian theories are precisely the precontinuous categories satisfying a smallness conditions (see \cite[Theorem~5.8]{ALR}). On the syntactic side, a cartesian theory can be recovered from the category of its models up to Morita equivalence; such Morita equivalence classes are in one-to-one correspondence with small lex categories.

\subsection{The regular case: syntax-semantics duality}
One of the main contributions of this paper is Theorem \ref{EXDEFDual}, the following analogue of \eqref{CartDual} for regular logic--a fragment of first-order logic where formulas are constructed only using truth, finitary conjunctions and existential quantifiers (Definition \ref{regtheory}).
\begin{equation}\label{RegDual}
\Reg(-,\Set):\EX^{op}\leftrightarrows\DEF:(-,\Set)^{\prod\to}
\end{equation}
The most important new concept in this paper is that of a \emph{definable category} which we borrowed from the additive world. This notion is `relative' in nature, i.e., such categories are (equivalent to) subcategories of locally finitely presentable categories closed under products, directed colimits and pure subobjects. It is an open question to find an `absolute' characterization of definable categories, where one provides a complete list of verifiable category-theoretic properties. The $2$-category $\DEF$ has definable categories as objects, interpretation functors (i.e., functors preserving products and directed colimits) as $1$-morphisms and natural transformations as $2$-morphisms. The doctrine for regular logic is the statement that finite limits and coequalizers commute with products and directed colimits in $\Set$. The exactness properties for regular logic are captured by the concept of predefinable categories introduced in Section \ref{Predef}; they are categories with products and directed colimits, and where products distribute over directed colimits.

On the left side of the duality is the $2$-category $\EX$ of small (Barr-)exact categories, regular functors and natural transformations. Morita equivalence classes of regular theories are in one-to-one correspondence with exact categories. The concept of an exact category was introduced by Barr in \cite{Barr} with the intention to codify what is common in $\Set$ and the category $\Ab$ of abelian groups. The existence of the exact completion of a lex category was shown by Carboni and Celia Magno in \cite{CM}. One half of the duality \eqref{RegDual} was proved by Makkai as (the finitary version of) \cite[Theorem~5.1]{MakInf} under the name of `strong conceptual completeness theorem' for regular logic, where one recovers (the Morita equivalence class of) a theory from its category of models. For any regular cardinal $\kappa$, Hu \cite[Theorem~5.10]{HuDual} generalized Makkai's result and proved the duality between $\kappa$-accessible categories with products and $\kappa$-Barr-exact categories with enough projectives (there such categories are called $\kappa$-Barr-exact accessible).

Definable categories are accessible with variable rank of accessibility, contain directed colimits, but their absolute characterization is unknown. On the other hand, the categories on the semantics side of Hu's duality, namely the $\kappa$-accessible categories with products, have an absolute definition but such categories may not have directed colimits. Thus, in some sense, our duality covers a cross-section of Hu's duality as $\kappa$ varies.

\subsection{The triangle of $2$-(anti-)equivalences}
We add a third $2$-category of regular toposes--the classifying toposes of regular theories--in the picture to obtain (anti-)equivalences between three $2$-categories. The fact that a theory in a well-studied fragment of first-order logic can be recovered, up to Morita equivalence, from its classifying topos is well-known (see \cite[\S D3]{Ele}). This gives a full $2$-duality between the $2$-category of (equivalence classes of) theories and the $2$-category of Grothendieck toposes of appropriate kind. Thus a theory manifests in three different yet equivalent forms. The theory itself represents `syntax', the category of models represents `semantics', whereas the classifying topos represents a topos containing a model of a given theory which is generic amongst its models in toposes.

The notion of injectivity gives rise to regular theories. In fact it is possible to characterize definable categories as (finite-)injectivity classes. Such a characterization allows one to extract the classifying topos of the underlying theory as the category of certain functors on the definable category. On the other hand, the points of the classifying topos (i.e., geometric morphisms from the base topos $\Set$) are precisely the $\Set$-models of the theory. This $2$-equivalence between the $2$-category of categories of models and the $2$-category of classifying toposes can be derived from \cite[Lemma~15.1]{Car} proved by Caramello. This discussion can be summarized as the commutativity of the triangle in Figure \eqref{Scheme}.

\begin{figure}[h]
\centering
\begin{tikzpicture}
\matrix (m) [matrix of math nodes,row sep=4em,column sep=4em,minimum width=6em]
  {\EX & \ & \DEF \\ \ & \REG & \ \\};
\path[-stealth]
    (m-1-1) edge[transform canvas={yshift=0.4ex}] node [above] {$\simeq^{op}$} (m-1-3)
    (m-1-3) edge[transform canvas={yshift=-0.4ex}] node [below] {} (m-1-1)
    	(m-1-3)	edge[transform canvas={yshift=0.4ex},pos=0.75] node {} (m-2-2)
    	(m-2-2) edge[transform canvas={yshift=-0.4ex}] node [sloped,below] {$\simeq$} (m-1-3)
    	(m-1-1)	edge[transform canvas={yshift=0.4ex},pos=0.75] node {} (m-2-2)
    	(m-2-2) edge[transform canvas={yshift=-0.4ex}] node [sloped,below] {$\simeq^{op}$} (m-1-1);
\end{tikzpicture}
\caption{The non-additive regular case: Scheme}\label{Scheme}
\end{figure}
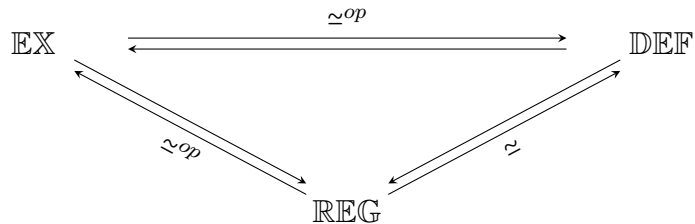

Coherent logic is a fragment of first-order logic richer than the regular one in the sense that it also uses falsity and finitary disjunctions for the construction of formulas. The strong conceptual completeness for coherent logic was proved by Makkai in \cite[Theorem~4.1]{MakStone}. In categorical terms, one replaces small exact categories by small pretoposes which also contain finite coproducts. The doctrine for coherent logic is the statement that finite limits, finite coproducts and coequalizers commute with directed colimits and ultraproducts in $\Set$ while the exactness properties for this logic are captured by `ultracategories' of Makkai.

\subsection{The additive version of the triangle}
The additive coherent analogue of the diagram in Figure \eqref{Scheme} is known due to Prest and others (see \cite[Theorem~1.1]{Pre32}) and served as one of the motivations for this paper. A small exact category is replaced by a small abelian category, a definable additive category is what it should be, and the additive analogue of a regular topos is a locally coherent additive category. Prest anticipated \cite[\S 4]{Pre32} what the non-additive coherent analogue of their diagram should look like where one considers small pretoposes instead of small exact categories. The correspondence between additive coherent case and non-additive regular case might look surprising at a first glance but it is natural because additive pretopos = additive exact category = abelian category due to the presence of biproducts in additive categories. For similar reasons, the notion of injectivity with respect to a finite cone will coincide with the notion of injectivity with respect to a single morphism. This means that on the additive side, the regular and the coherent cases coincide whereas they are distinct on the non-additive side.

A distinctive feature of the additive case is the presence of internal duality at each vertex in the diagram that stems from the fact that the dual of a small abelian category is again abelian. This fact was heavily exploited in their proofs of $2$-dualities. Motivated by the lack of internal dualities in non-additive case, we reprove as Theorem \ref{ABEXDEFADual} the duality between small abelian categories and definable additive categories using methods developed for the non-additive case. One of the basic tools for this is Freyd's \cite{Freyd} free abelian completion of a small preadditive category which precedes \cite{CM} and motivated the exact completion of a small lex category.

We freely use the terminology and notations from \cite{AR} throughout this manuscript.

\section*{Acknowledgments}
The first author would like to express sincere thanks to Olivia Caramello and Mike Prest. Caramello brought this project to his attention, pointed to the work of Makkai and Hu, and we had many discussions at the preliminary stage of the project. Prest explained the additive case to him and had many discussions on the topic.

\section{Definable categories in locally finitely presentable categories}
\subsection{Definition and characterizations}
Recall from \cite{AR} that, for a regular cardinal $\lambda$, a locally $\lambda$-presentable category is a cocomplete category with a set $\mathcal{A}$ of $\lambda$-presentable objects such that every object is a $\lambda$-directed colimit of objects of $\mathcal{A}$.

Below we give the most important definition; the terminology is inspired by and related to the model-theoretic notion of definability.
\begin{definitions}
A full subcategory $\LL$ of a locally finitely presentable category $\K$ is a \emph{definable subcategory} if it is closed in $\K$ under products, directed colimits and pure subobjects. Furthermore, a category is \emph{definable} if it is equivalent to a definable subcategory of a locally finitely presentable category.

A functor between two definable categories is an \emph{interpretation functor} if it preserves products and directed colimits.
\end{definitions}

Let $\DEF$ denote the $2$-category of definable categories whose $1$-morphisms are interpretation functors and $2$-morphisms are natural transformations.

Definable subcategories coincide with certain small-injectivity classes in locally finitely presentable categories. A class $\mathcal{A}$ of objects in a locally presentable category $\K$ is called a (small-)injectivity class provided there is a (set) collection $\mathcal{M}$ of morphisms in $K$ such that $\mathcal{A}$ is precisely the collection of all objects in $\K$ that are $h$-injective for all $h\in\mathcal{M}$.

Recall that a category is called \emph{weakly locally ($\lambda$-)presentable} if it is ($\lambda$-)accessible and has weak colimits.
\begin{theorem}\cite[Theorem~4.11]{AR}\label{CharWeakLocPres}
The following are equivalent for a category $\LL$:
\begin{enumerate}
\item $\LL$ is equivalent to a small-injectivity class in a locally presentable category $\K$.
\item $\LL$ is weakly locally presentable.
\item $\LL$ is an accessible category with products.
\item $\LL$ is equivalent to a weakly reflective, accessibly embedded subcategory of a presheaf category.
\end{enumerate}
\end{theorem}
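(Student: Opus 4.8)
The plan is to prove the cycle of implications $(3)\Rightarrow(2)\Rightarrow(4)\Rightarrow(1)\Rightarrow(3)$, drawing throughout on the basic theory of accessible categories and on the canonical embedding of an accessible category into a presheaf category. For $(3)\Rightarrow(2)$ I would invoke the standard fact that an accessible category possessing all products is already complete; a complete category always admits weak colimits, since the weak colimit of a diagram can be cut out of a suitable product by equalizers. Thus $\LL$ is accessible with weak colimits, i.e.\ weakly locally presentable.

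For $(2)\Rightarrow(4)$ I would fix $\lambda$ with $\LL$ being $\lambda$-accessible, write $\A=\LL_\lambda$ for the full subcategory of $\lambda$-presentable objects, and use the canonical full embedding $E\colon\LL\to\PSh{\A}$ sending $L$ to its restricted representable $\LL(-,L)|_{\A}$, which preserves $\lambda$-directed colimits and is accessibly embedded. The content of this step is to promote the weak colimits of $\LL$ to a weak reflection of $\PSh{\A}$ onto (the image of) $\LL$: writing an arbitrary presheaf as a $\lambda$-directed colimit of representables and forming the corresponding weak colimit in $\LL$ produces a candidate weak reflection, whose weak universal property I would verify from the defining property of weak colimits together with the closure of $\LL$ under $\lambda$-directed colimits.

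For $(4)\Rightarrow(1)$ I would take the ambient presheaf category $\PSh{\C}$ as the witnessing locally presentable category $\K$ and extract the test set $\M$ from the weak reflection. Concretely, for each object $C$ in a dense generating set (the representables, or the $\lambda$-presentable presheaves) the unit $\eta_C\colon C\to RC$ of the weak reflection is a morphism of $\K$, and I would put $\M=\{\eta_C\}$. Every object of $\LL$ is $\eta_C$-injective precisely because $\eta_C$ is a weak reflection, so that any map out of $C$ into an object of $\LL$ factors through $RC$; the reverse inclusion, that every $\M$-injective presheaf already lies in $\LL$, is the technical heart of the argument, where one uses the accessibly embedded condition to reduce the injectivity conditions to the presentable level and thereby identify an $\M$-injective $X$ with its own weak reflection.

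Finally, for $(1)\Rightarrow(3)$ I would use that a small-injectivity class is closed under products in the ambient locally presentable category, since products of injectives are injective, and is accessibly embedded, hence closed under $\lambda$-directed colimits for a suitable $\lambda$; this renders $\LL$ accessible and equipped with products, which is exactly $(3)$. The main obstacle throughout is the passage between genuine and weak universal properties, most acutely in $(4)\Rightarrow(1)$: turning weak reflectivity into injectivity against a single \emph{set} of morphisms requires a small-object-argument style construction of the weak reflections and careful size bookkeeping to guarantee that $\M$ remains small.
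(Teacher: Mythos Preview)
The paper does not give its own proof of this statement: it is quoted verbatim as \cite[Theorem~4.11]{AR} and used as a black box. So there is no in-paper argument to compare your proposal against.

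That said, your cycle has a genuine mathematical gap at the step $(3)\Rightarrow(2)$. You assert that ``an accessible category possessing all products is already complete''. This is false, and if it were true the theorem would collapse: for accessible categories one has the standard trichotomy that complete $\Leftrightarrow$ cocomplete $\Leftrightarrow$ locally presentable, so your claim would force every accessible category with products to be locally presentable, making conditions (1)--(4) equivalent to local presentability rather than to \emph{weak} local presentability. Concretely, small-injectivity classes that are only weakly (not genuinely) reflective give accessible categories with products that lack equalizers. Your follow-up sentence, that weak colimits can be ``cut out of a suitable product by equalizers'', also does not describe a valid construction: limits do not manufacture weak colimits in general. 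The actual route in \cite{AR} from (3) to (2) builds weak colimits directly from products and the accessibility data (essentially by taking a product indexed over a sufficiently fat set of objects receiving cocones and using the solution-set style argument), not by passing through completeness.

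Your sketches of $(2)\Rightarrow(4)$, $(4)\Rightarrow(1)$, and $(1)\Rightarrow(3)$ are broadly along the right lines, though in $(2)\Rightarrow(4)$ you should be aware that an arbitrary presheaf is a colimit of representables over its category of elements, which is not $\lambda$-filtered in general; the construction of the weak reflection therefore requires more than just ``take the weak colimit of the same diagram in $\LL$'' and uses the weak-colimit hypothesis in an essential way.
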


A sharper cardinal-specific characterization is provided by the following theorem.
\begin{theorem}\cite[Theorem~2.2]{RAB}
Let $\K$ be a locally $\lambda$-presentable category. A full subcategory of $\K$ is a $\lambda$-injectivity class (i.e., injectivity class w.r.t. a set of morphisms having $\lambda$-presentable domains and codomains) iff it is closed under products, $\lambda$-directed colimits and $\lambda$-pure subobjects.
\end{theorem}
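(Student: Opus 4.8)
The statement is an `if and only if', and I would treat the two directions separately. For the direction that a $\lambda$-injectivity class $\LL=\mathrm{Inj}_\lambda(\M)$ is closed under the three operations, I would argue against a single test morphism $h\colon P\to Q$ of $\M$, using throughout that $P$ and $Q$ are $\lambda$-presentable. Closure under products is the standard observation that a lifting can be assembled componentwise. Closure under $\lambda$-directed colimits uses only $\lambda$-presentability of the domain: a morphism $P\to\colim_i L_i$ factors through some stage $L_i\in\LL$, where $h$-injectivity supplies a lift, which is then postcomposed into the colimit. Closure under $\lambda$-pure subobjects is where $\lambda$-presentability of both $P$ and $Q$ is essential: given a $\lambda$-pure monomorphism $m\colon A\to B$ with $B\in\LL$ and a morphism $f\colon P\to A$, I would compose with $m$, lift $mf$ along $h$ using $h$-injectivity of $B$ to get $g\colon Q\to B$ with $gh=mf$, and then apply the defining diagonal-filling property of $\lambda$-pure monomorphisms to the resulting commutative square with $\lambda$-presentable top corners to obtain $w\colon Q\to A$ with $wh=f$; thus $A$ is $h$-injective.

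For the converse I would produce the defining set of morphisms explicitly. Let $\M$ be a set of representatives of all morphisms $h\colon P\to Q$ between $\lambda$-presentable objects of $\K$ to which every object of $\LL$ is injective. By construction $\LL\subseteq\mathrm{Inj}_\lambda(\M)$, so the entire content is the reverse inclusion: every $\M$-injective object lies in $\LL$. The plan is to realise each $\M$-injective object $A$ as a $\lambda$-pure subobject of an object of $\LL$ and then invoke closure under $\lambda$-pure subobjects to conclude $A\in\LL$.

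To obtain such an object I would first isolate, as a lemma, that closure under $\lambda$-directed colimits and $\lambda$-pure subobjects makes $\LL$ accessible and accessibly embedded in $\K$; combined with closure under products and the resulting solution-set condition, this yields weak reflections $\eta_A\colon A\to RA$ with $RA\in\LL$ for every object $A$ (this is the structural input behind the equivalences in Theorem~\ref{CharWeakLocPres}). The key claim is then that, when $A$ is $\M$-injective, the unit $\eta_A$ is a $\lambda$-pure monomorphism. To verify the $\lambda$-purity lifting property I would take a test square with $\lambda$-presentable corners $u\colon P\to A$, $h\colon P\to Q$, $v\colon Q\to RA$ satisfying $\eta_A u=vh$, present $RA$ as a $\lambda$-directed colimit of $\lambda$-presentable objects so as to replace $v$ by an approximation through a $\lambda$-presentable stage, and use the weak universality of $\eta$ together with this approximation to manufacture, from $h$, a morphism between $\lambda$-presentable objects to which every object of $\LL$ is injective, that is, a morphism of $\M$; the required filler $w\colon Q\to A$ with $wh=u$ is then delivered by $\M$-injectivity of $A$.

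The main obstacle will be precisely this last reduction: turning an arbitrary purity test square into an injectivity problem against a morphism of $\M$. The delicate points are that $RA$ need not be $\lambda$-presentable, so the morphism $v$ and the universal property of $\eta$ must both be routed through a $\lambda$-directed colimit presentation of $RA$ and through the $\lambda$-presentability of $P$ and $Q$, and that one must check the manufactured morphism genuinely has all of $\LL$ injective to it rather than merely $A$. Establishing accessibility of $\LL$ from closure under $\lambda$-directed colimits and $\lambda$-pure subobjects, which is what guarantees the weak reflections exist with controllable data, is the other technical input I would prove separately as a lemma.
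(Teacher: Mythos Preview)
The paper does not give its own proof of this theorem; it is quoted verbatim as \cite[Theorem~2.2]{RAB} and used as a black box, so there is nothing in the paper to compare your argument against. That said, your outline is essentially the proof given in the cited source: the forward direction is routine, and for the converse one takes $\M$ to be all morphisms between $\lambda$-presentables to which $\LL$ is injective, builds a weak reflection $\eta_A:A\to RA$ into $\LL$, and shows $\eta_A$ is $\lambda$-pure whenever $A$ is $\M$-injective.

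One point to tighten. Your description of the purity step---``manufacture, from $h$, a morphism between $\lambda$-presentable objects to which every object of $\LL$ is injective''---is not quite how the argument runs, and as stated it is not clear this can be done (the test morphism $h$ need not lie in $\M$). The actual mechanism in \cite{RAB} is to construct $RA$ explicitly by a transfinite small object argument: iterate pushouts of all morphisms in $\M$ starting from $A$. Because $A$ is $\M$-injective, each partial map $A\to A_\alpha$ admits a retraction $r_\alpha:A_\alpha\to A$. Given a purity test square $(u,h,v)$ with $\eta_A u=vh$, the $\lambda$-presentability of $Q$ lets you factor $v$ through some stage $A_\alpha$, and composing with $r_\alpha$ yields the required filler $w:Q\to A$ with $wh=u$. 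So the weak reflection should be produced concretely via the small object argument on $\M$, not abstractly via accessibility of $\LL$; the explicit construction is what delivers the retractions you need.
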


We obtain the following when $\lambda=\aleph_0$.
\begin{cor}
A category is definable iff it is equivalent to a finite-injectivity class in a locally finitely presentable category.
\end{cor}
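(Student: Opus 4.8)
The plan is to obtain the corollary directly by specializing the immediately preceding cardinal-specific characterization \cite[Theorem~2.2]{RAB} to the case $\lambda=\aleph_0$ and reconciling the resulting statement with the definition of a definable category.

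First I would unwind the definitions. By definition a category is definable exactly when it is equivalent to a full subcategory $\LL$ of a locally finitely presentable category $\K$ that is closed under products, directed colimits and pure subobjects. Hence it is enough to show that, inside a fixed locally finitely presentable $\K$, the full subcategories closed under these three operations are precisely the finite-injectivity classes.

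Next I would apply \cite[Theorem~2.2]{RAB} with $\lambda=\aleph_0$. One checks that each $\lambda$-prefixed notion collapses to its unprefixed counterpart at $\lambda=\aleph_0$: a locally $\aleph_0$-presentable category is a locally finitely presentable one, $\aleph_0$-directed colimits are directed colimits, $\aleph_0$-pure subobjects are pure subobjects, and the $\aleph_0$-presentable objects are exactly the finitely presentable ones. With these identifications the cited theorem reads: a full subcategory of $\K$ is an injectivity class with respect to a set of morphisms having finitely presentable domains and codomains---that is, a finite-injectivity class---if and only if it is closed under products, directed colimits and pure subobjects. Combining this biconditional with the reformulation of definability from the previous paragraph yields both implications of the corollary at once.

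Since the entire mathematical content is supplied by \cite[Theorem~2.2]{RAB}, there is no substantive obstacle here; the only point demanding attention is the terminological bookkeeping in the specialization $\lambda=\aleph_0$, namely confirming that ``finite-injectivity class'' in the statement matches the $\aleph_0$-injectivity classes of the cited theorem.
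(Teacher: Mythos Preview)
Your proposal is correct and matches the paper's approach exactly: the paper presents this corollary without a separate proof, introducing it simply with ``We obtain the following when $\lambda=\aleph_0$,'' i.e., by specializing \cite[Theorem~2.2]{RAB} precisely as you describe.
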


It is possible to replace directed colimits by reduced products in the definition of definable categories in the presence of the remaining two conditions.
\begin{pro}
A full subcategory $\LL$ of a locally finitely presentable category $\K$ is definable iff it is closed in $\K$ under products, reduced products and pure subobjects.
\end{pro}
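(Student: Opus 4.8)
The plan is to prove the two implications separately; the observation tying them together is that a reduced product is a directed colimit of products. Precisely, for a family $(X_i)_{i\in I}$ in $\K$ and a proper filter $\F$ on $I$, the reduced product is $\prod_{\F}X_i=\colim_{F\in\F}\prod_{i\in F}X_i$, the colimit over $\F$ directed by reverse inclusion (a directed poset since $\F$ is closed under finite intersection), with connecting morphisms the product projections $\prod_{i\in F}X_i\to\prod_{i\in F'}X_i$ for $F'\subseteq F$. Granting this, the forward implication is immediate: if $\LL$ is definable, hence closed under products and directed colimits, then each reduced product of objects of $\LL$ is a directed colimit of products $\prod_{i\in F}X_i\in\LL$, so it lies in $\LL$; the remaining two conditions are part of the hypothesis.

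For the converse, assume $\LL$ is closed under products, reduced products and pure subobjects. Since definability is, by definition, closure under products, directed colimits and pure subobjects, it suffices to deduce closure under directed colimits. I would isolate the following as the key lemma: \emph{for every directed diagram $(X_i,f_{ij})_{i\le j}$ in $\K$ over a directed poset $I$, the colimit $\colim_i X_i$ is a pure subobject of the reduced product $\prod_{\F}X_i$, where $\F$ is the proper filter on $I$ generated by the up-sets $\uparrow i=\{j:j\ge i\}$.} Granting the lemma, a directed diagram in $\LL$ has $\prod_{\F}X_i\in\LL$ by closure under reduced products, whence $\colim_i X_i\in\LL$ by closure under pure subobjects; this is exactly closure under directed colimits and finishes the proof.

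To prove the lemma I would first construct a comparison morphism $\phi:\colim_i X_i\to\prod_{\F}X_i$. For each $i$ the family $(f_{ij})_{j\ge i}$ gives a morphism $X_i\to\prod_{j\in\uparrow i}X_j$, which I compose with the colimit coprojection to get $\psi_i:X_i\to\prod_{\F}X_j$; since for $i\le i'$ the families $(f_{ij})_{j\ge i}$ and $(f_{i'j}f_{ii'})_{j\ge i'}$ agree on $\uparrow i'\in\F$, the $\psi_i$ form a cocone and induce $\phi$. The core is then to verify that $\phi$ is pure, using the fill-in characterization of \cite{AR}: $\phi$ is a pure monomorphism provided that, whenever $\phi u=vp$ for a morphism $p:P\to Q$ of finitely presentable objects, $u:P\to\colim_iX_i$ and $v:Q\to\prod_{\F}X_i$, there is a diagonal $d:Q\to\colim_iX_i$ with $dp=u$. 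Given such a square, finite presentability of $P$ factors $u$ as $f_{i_0}u_0$ through some $X_{i_0}$, finite presentability of $Q$ factors $v$ through a stage $\prod_{i\in F_0}X_i$, and the equality $\phi u=vp$ in the colimit already holds after restriction to $\prod_{i\in F_1}X_i$ for some $F_1\in\F$ with $F_1\subseteq\uparrow i_0\cap F_0$. Projecting to any single coordinate $j_0\in F_1$ (so $j_0\ge i_0$) produces $w:Q\to X_{j_0}$ with $wp=f_{i_0j_0}u_0$, and $d:=f_{j_0}w$ then satisfies $dp=f_{j_0}f_{i_0j_0}u_0=f_{i_0}u_0=u$, as required.

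I expect the main obstacle to be exactly this purity verification: one must reconcile the two unrelated finite-presentability factorizations---$u$ living at a single index $i_0$ and $v$ at a finite stage of the filter---and then realise the fill-in on the nose inside $\colim_i X_i$ by descending to a single coordinate of $F_1$. By contrast, the remaining points are routine: that $\phi$ is monic follows automatically by applying the fill-in to the codiagonal $P\sqcup P\to P$ (which forces any two maps equalised by $\phi$ to coincide), and both the forward implication and the reduction of the converse to the key lemma are formal.
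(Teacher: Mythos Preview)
Your proof is correct and follows the same overall strategy as the paper: both reduce the converse to the key lemma that the canonical comparison $\phi:\colim_i X_i\to\prod_\F X_i$ (for $\F$ the filter generated by the principal up-sets) is a pure monomorphism, and both handle the forward implication by writing a reduced product as a directed colimit of products. The only difference is in how purity of $\phi$ is established. You verify the finitary lifting property directly, factoring $u$ and $v$ through finite stages and then projecting to a single coordinate $j_0\in F_1$ to extract the diagonal. The paper instead observes that $\phi$ is the directed colimit, in the arrow category $\K^2$, of the maps $h_i:X_i\to\prod_{j\geq i}X_j$, $x\mapsto(f_{ij}(x))_{j\geq i}$; each $h_i$ is split (the projection to the $i$-th coordinate is a retraction), hence pure, and one then invokes \cite[Proposition~2.30]{AR} that directed colimits of pure monomorphisms are pure. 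The paper's route is shorter and more conceptual, outsourcing the work to a standard citation; your argument is self-contained and exposes the mechanism explicitly. Both are fine.
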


\begin{proof}
Necessity is clear since reduced products can be written as directed colimits of certain products.

For sufficiency we only need to show that a class closed under products, reduced products and pure subobjects is also closed under directed colimits. Let $(I,\leq)$ be a directed set and let $\D:I\to\LL$ be a diagram; equivalently we have $((D_i)_{i\in I};(d_{ij})_{i\leq j})$. We know that $D=\colim\D$ exists in $\K$ since $\K$ is cocomplete. Denote the canonical maps $D_i\to D$ by $d_i$.

Consider the embedding $h_i:D_i\to\prod_{j\geq i}D_j$ given by $h_i(x):=(d_{ij}(x))_{j\geq i}$. Let $\F$ denote the smallest filter containing the principal upper sets $\{j:j\geq i\}$ for each $i\in I$. Since $I$ is directed every element of $\F$ contains a principal upper set and hence the principal upper sets form a cofinal set in $\F^{op}$. Let $\prod_\F D_i$ denote the reduced product $\prod_{i\in I}D_i/\F$.

There are natural maps $H_{ik}:\prod_{j\geq i}D_j\to\prod_{l\geq k}D_l$ for $i\leq k$ and $H_i:\prod_{j\geq i}D_j\to\prod_\F D_i$ which satisfy $H_{ik}h_i=h_k d_{ik}$ and $H_kH_{ik}=H_i$. By cofinality of principal upper sets in $\F^{op}$, we see that $\prod_\F D_i$ is in fact the colimit of the system $((\prod_{j\geq i}D_j)_{i\in I};(H_{ik})_{i\leq k})$. By the universal property of the colimit, there exists a unique map $h:D\to\prod_\F D_i$ such that $H_ih_i=h d_i$ for each $i\in I$. Consequently $h$ is the colimit of the maps $h_i$ in the arrow category $\K^2$.

Now each $h_i$ is split and hence a pure embedding. Since a directed colimit of pure embeddings is pure \cite[Proposition~2.30]{AR}, we obtain that $h$ is pure. Since $\LL$ contains reduced products and is closed under pure subobjects, we conclude that $D\in\LL$.
\end{proof}

Observe that all the above characterizations of definable categories are relative with respect to an embedding in a locally finitely presentable category. In general it is hard to check whether a given category is definable.
\begin{que}(cf. \cite[Ch.~17]{PreAMS})
Is there an ``intrinsic'' characterization of definable categories?
\end{que}
We will provide a representation theorem for definable categories in Theorem \ref{DefExDef}. Unfortunately the verification of the categorical equivalence provided by that theorem is not intrinsic in nature.

In the next two sections we provide a partial answer to this question by studying two classes of categories containing definable categories.

\subsection{Predefinable categories}\label{Predef}
Let $\LL$ be a category with products and directed colimits. Recall that \emph{products distribute over directed colimits} in $\LL$ when, given a set $(\D_i:D_i\to\A)_{i\in I}$ of directed diagrams in $\LL$, the canonical morphism
$\colim \prod_{i\in I}\D_i\to\prod_{i\in I}\colim\D_i$ is an isomorphism where the ``product diagram" $\prod_{i\in I}\D_i:\prod_{i\in I}D_i\to\A$ is defined by $(d_i)\longmapsto\prod\D_id_i$.

Let $\Ind\LL$ be the free completion of a category $\LL$ under directed colimits and $\eta_\LL:\LL\to\Ind\LL$ be the embedding. If $\LL$ has directed colimits, we get a functor $C_\LL:\Ind\LL\to\LL$ preserving directed colimits and satisfying $C_\LL\circ\eta_\LL\simeq\Id_\LL$.

\begin{definition}
A category is called \emph{predefinable} if it has products and directed colimits and products distribute over directed colimits.
\end{definition}

\begin{theorem}
A category $\LL$ is predefinable if and only if $C_\LL$ preserves products.
\end{theorem}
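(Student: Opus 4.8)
The plan is to show that, under the correct description of products in $\Ind\LL$, the canonical comparison morphism witnessing that $C_\LL$ preserves a given product is \emph{literally} the distributivity morphism attached to a corresponding family of directed diagrams; the two conditions in the statement then coincide term by term. First I would recall that each object of $\Ind\LL$ is a directed colimit of objects of $\LL$, so a family $(X_i)_{i\in I}$ in $\Ind\LL$ may be written as $X_i\simeq\colim\,\eta_\LL\D_i$ for directed diagrams $\D_i:D_i\to\LL$, and conversely every family of directed diagrams in $\LL$ arises this way. Since $C_\LL$ preserves directed colimits and $C_\LL\circ\eta_\LL\simeq\Id_\LL$, applying $C_\LL$ gives $C_\LL X_i\simeq\colim\D_i$ computed in $\LL$.

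The key step is to compute the product $\prod_{i\in I}X_i$ in $\Ind\LL$ as the formal colimit of the product diagram, namely $\prod_{i\in I}X_i\simeq\colim\,\eta_\LL\big(\prod_{i\in I}\D_i\big)$ indexed by the directed category $\prod_{i\in I}D_i$. To verify this universal property it suffices to test against objects $\eta_\LL L$ with $L\in\LL$, because every object of $\Ind\LL$ is a directed colimit of such objects and the contravariant hom-functor turns these colimits into limits, which commute with the products on the right. For such a test object one uses that $\eta_\LL L$ is finitely presentable in $\Ind\LL$, so that $\Ind\LL(\eta_\LL L,-)$ preserves directed colimits, together with the fullness of $\eta_\LL$, to reduce the claim to the distributivity of products over directed colimits in $\Set$ applied to the hom-sets $\LL(L,\D_i(d_i))$. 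This last fact about $\Set$ is exactly the ambient combinatorics underlying the whole statement, and is the point I expect to need the most care.

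Granting the product formula, applying $C_\LL$ yields $C_\LL\big(\prod_i X_i\big)\simeq\colim\prod_i\D_i$, while $\prod_i C_\LL X_i\simeq\prod_i\colim\D_i$. A routine check identifies the comparison morphism induced by $C_\LL$ on the projections $\prod_i X_i\to X_j$ with the canonical distributivity morphism $\colim\prod_i\D_i\to\prod_i\colim\D_i$. Therefore $C_\LL$ preserves the product of $(X_i)$ precisely when this distributivity morphism is invertible; and since the passages between families $(X_i)$ in $\Ind\LL$ and families of directed diagrams $(\D_i)$ in $\LL$ are mutually inverse, $C_\LL$ preserves all products if and only if products distribute over directed colimits in $\LL$.

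The main obstacle I anticipate is pinning down the product in $\Ind\LL$ via the product diagram and checking that its comparison map agrees on the nose with the distributivity morphism, rather than merely up to an unidentified isomorphism; once the product formula and the identification of the two canonical maps are in hand, the remainder is bookkeeping with the identities $C_\LL\circ\eta_\LL\simeq\Id_\LL$ and the preservation of directed colimits by $C_\LL$.
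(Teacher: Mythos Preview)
Your proposal is correct. The paper does not give its own argument for this statement: it simply cites the proof of \cite[Theorem~2.1]{ARV}, where the claim appears as the intermediate step labelled~$(\ast)$. Your sketch is essentially the standard argument one finds there---computing products in $\Ind\LL$ via the product diagram $\prod_i\D_i$ (using that products distribute over directed colimits in $\Set$), and then identifying the comparison map for $C_\LL$ with the distributivity morphism in~$\LL$. One small remark: you might make explicit early on that an arbitrary product $\prod_i D_i$ of directed posets is again directed, since this is needed both to form the product diagram and to invoke the $\Set$-level distributivity; otherwise the argument is complete as outlined.
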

This is $(\ast)$ in the proof of \cite[Theorem~2.1]{ARV}.

Accessible categories with directed colimits are precisely reflective and accessibly embedded full subcategories of finitely accessible categories. In fact, if $\LL$ is accessible then $\LL\simeq\Ind_\lambda\C$, the free completion of a small category $\C$ under $\lambda$-directed colimits for some regular cardinal $\lambda$. Then the inclusion $E:\LL\to\Ind\C$ is the desired accessible full embedding into a finitely accessible category with a left adjoint $H:\Ind\C\to\Ind_\lambda\C$ such that $H\circ E\simeq\Id_\LL$.

Reflective full subcategory whose reflector preserves limits is called a \emph{complete localization}. We say that it is a \emph{complete prelocalization} if the reflector preserves products.

\begin{lemma}\label{le2.3}
Complete prelocalizations of predefinable categories are predefinable.
\end{lemma}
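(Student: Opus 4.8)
The plan is to present $\LL$ as a reflective full subcategory $\iota\colon\LL\hookrightarrow\M$ of a predefinable category $\M$ whose reflector $R\colon\M\to\LL$ preserves products---this is exactly the hypothesis that $\LL$ is a complete prelocalization of $\M$---and then to transport the distributivity isomorphism from $\M$ to $\LL$ along $R$.

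First I would record how limits and directed colimits in $\LL$ relate to those in $\M$. Being a right adjoint, $\iota$ preserves all limits, so $\LL$ is closed in $\M$ under products and these are computed as in $\M$; in particular $\LL$ has products. Dually, $\LL$ inherits directed colimits from $\M$ by reflecting them: for a directed diagram $\D$ in $\LL$ one has $\colim^\LL\D\simeq R(\colim^\M\D)$. Thus $\LL$ has products and directed colimits, the first requirement of predefinability. I would also note the two preservation properties of $R$ that drive the argument: as a left adjoint $R$ preserves all colimits, in particular directed colimits, and by hypothesis $R$ preserves products.

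The heart of the proof is the distributivity. Fix a set $(\D_i)_{i\in I}$ of directed diagrams in $\LL$ and let $c_\LL\colon\colim\prod_{i\in I}\D_i\to\prod_{i\in I}\colim\D_i$ be the canonical comparison morphism formed in $\LL$. Using the facts above I would rewrite both sides through $R$. Since products in $\LL$ agree with those in $\M$, the product diagram is the same in either category, so $\colim^\LL\prod_{i\in I}\D_i\simeq R(\colim^\M\prod_{i\in I}\D_i)$; and since $R$ preserves products, $\prod^\LL_{i\in I}\colim^\LL\D_i\simeq\prod^\LL_{i\in I}R(\colim^\M\D_i)\simeq R(\prod^\M_{i\in I}\colim^\M\D_i)$. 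Under these identifications $c_\LL$ is carried to $R(c_\M)$, where $c_\M$ is the canonical comparison morphism in $\M$ for the same diagrams (viewed in $\M$ via $\iota$). Since $\M$ is predefinable, $c_\M$ is an isomorphism, and a functor preserves isomorphisms, so $c_\LL\simeq R(c_\M)$ is an isomorphism. This is precisely distributivity of products over directed colimits in $\LL$, and it completes the verification that $\LL$ is predefinable.

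The step needing genuine care---and the main obstacle---is the identification of $c_\LL$ with $R(c_\M)$: one must check that applying $R$ to the canonical comparison morphism of $\M$ reproduces exactly the canonical comparison morphism of $\LL$, not merely some isomorphism between the correct objects. I would verify this componentwise, using that a morphism into a product is determined by its composites with the projections: for each $i\in I$ the $i$-th component of $R(c_\M)$ is the map induced on directed colimits by the $i$-th projection $\prod_{i\in I}\D_i\to\D_i$, because $R$ preserves both the projections (products) and the colimit cocones (directed colimits), and this is exactly the defining property of the $i$-th component of $c_\LL$. Alternatively one could invoke the criterion that $\LL$ is predefinable iff $C_\LL$ preserves products, together with the facts that $\Ind$ sends the adjunction $R\dashv\iota$ to an adjunction and that $C$ is natural with respect to functors preserving directed colimits; but this route requires analysing products in $\Ind\LL$, which is less transparent than the direct transport above.
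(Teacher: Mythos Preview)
Your argument is correct. The paper itself does not give a proof of this lemma; it simply refers the reader to \cite[Example~2.3(2)]{ARV} and \cite[\S 5]{ALR}. Your direct transport of the comparison morphism along the reflector $R$---using that $R$ preserves directed colimits as a left adjoint and products by hypothesis---is exactly the natural elementary argument, and your care in identifying $c_\LL$ with $R(c_\M)$ componentwise via the projections is the right way to close that gap. One small quibble: the sentence ``$\iota$ preserves all limits, so $\LL$ is closed in $\M$ under products'' is not quite the right justification (preservation of limits by $\iota$ does not by itself give closure); the closure of a reflective subcategory under limits is the standard fact you want here, and you are clearly aware of it. The alternative route you sketch via the criterion that $C_\LL$ preserves products is likely closer in spirit to the cited source, since the paper attributes that criterion to the same reference \cite{ARV}, but your direct argument is cleaner and entirely self-contained.
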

See \cite[Example~2.3(2)]{ARV} for a proof. Also see \cite[\S 5]{ALR}.

\begin{theorem}  
Accessible predefinable categories are precisely the accessible complete prelocalizations of weakly locally finitely presentable categories.
\end{theorem}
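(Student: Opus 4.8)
The plan is to prove the two inclusions separately, relying on three ingredients already available: Lemma \ref{le2.3}, the characterization of predefinability through the functor $C_\LL$ (equivalently, through product-diagrams), and the structural description of accessible categories with directed colimits as reflective, accessibly embedded subcategories of finitely accessible categories recalled above. Throughout I use the $\lambda=\aleph_0$ form of Theorem \ref{CharWeakLocPres}, so that ``weakly locally finitely presentable'' means ``finitely accessible with products''.

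First, the easier direction. Let $\LL$ be an accessible complete prelocalization of a weakly locally finitely presentable category $\M$. I would begin by showing that $\M$ itself is predefinable. Write $\M\simeq\Ind\C$ with $\C$ its (small, dense) subcategory of finitely presentable objects, and consider the nerve $N\colon\M\to\PSh{\C}$, $N(M)=\M(-,M)|_{\C}$. Since the objects of $\C$ are finitely presentable, $N$ preserves directed colimits; being a hom-functor it preserves products; and since $\C$ is dense $N$ is fully faithful, hence reflects isomorphisms. As products distribute over directed colimits in $\Set$, they do so pointwise in $\PSh{\C}$; applying $N$ to the canonical comparison map $\colim\prod\to\prod\colim$ attached to a product-diagram in $\M$ therefore yields an isomorphism, so the comparison map is itself an isomorphism. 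Thus $\M$ is predefinable. Since $\LL$ is a complete prelocalization of $\M$, Lemma \ref{le2.3} gives that $\LL$ is predefinable, and $\LL$ is accessible by hypothesis.

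For the converse, let $\LL$ be accessible and predefinable; in particular $\LL$ has products and directed colimits. Using the structural description, fix $\lambda$ large enough that $\LL\simeq\Ind_\lambda\C$ with $\C$ small, the inclusion $E\colon\LL\to\Ind\C$ preserves $\lambda$-directed colimits, and $E$ admits a reflector $H\colon\Ind\C\to\LL$ with $H\circ E\simeq\Id_\LL$; the category $\Ind\C$ is finitely accessible. I then need two facts: that $\Ind\C$ has products (so that it is weakly locally finitely presentable) and that $H$ preserves them (so that $E$ exhibits $\LL$ as a complete prelocalization). For products in $\Ind\C$, I would compute them by product-diagrams: given $(X^s)_{s\in S}$ with $X^s=\colim_{i\in I_s}P^s_i$ and $P^s_i\in\C$, the product in $\Ind\C$ of the $\C$-objects $P^s_{i_s}$ exists and equals $E(\prod^\LL_s P^s_{i_s})$, which lies in $\Ind\C$ as a $\lambda$-directed colimit of $\C$-objects because $E$ preserves $\lambda$-directed colimits; then $\colim_{(i_s)\in\prod_s I_s}\prod^{\Ind\C}_s P^s_{i_s}$ is the desired product $\prod^{\Ind\C}_s X^s$, which one checks by testing against finitely presentable objects and invoking distributivity of products over directed colimits in $\Set$. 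Finally, since $H$ preserves directed colimits and $H\circ E\simeq\Id$, one has $H(\prod^{\Ind\C}_s X^s)\simeq\colim_{(i_s)}\prod^\LL_s P^s_{i_s}$, whereas $\prod^\LL_s H(X^s)\simeq\prod^\LL_s\colim_{i_s}P^s_{i_s}$; these coincide precisely because products distribute over directed colimits in $\LL$, i.e. because $\LL$ is predefinable. Hence $H$ preserves products, $E$ is accessible, and $\LL$ is an accessible complete prelocalization of the weakly locally finitely presentable category $\Ind\C$.

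The hard part is the computation of products in $\Ind\C$ together with the verification that the reflector $H$ preserves them; everything else is formal. Establishing that $\Ind\C$ has products is the more delicate point, since finitely accessible categories need not be closed under products in general---the construction above works only because $\LL$ already has products and these descend to product-diagrams of $\C$-objects in $\Ind\C$. The verification that $H$ preserves these products is where the predefinability hypothesis on $\LL$ enters in an essential (and, by the above characterization via $C_\LL$, necessary) way. I would keep $\lambda$ fixed and large enough throughout, so that $E$ preserves $\lambda$-directed colimits and the identification of the relevant $\lambda$-directed colimits with the ambient directed colimits in $\Ind\C$ is legitimate.
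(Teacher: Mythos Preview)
Your argument is correct, and the easier direction is essentially the paper's: both show the ambient weakly locally finitely presentable category is predefinable by embedding it into a presheaf category where distributivity holds pointwise, then invoke Lemma~\ref{le2.3}.

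For the harder direction your route differs from the paper's. The paper does not construct products in $\Ind\C$ by hand; instead it argues that since $\LL$ is weakly locally $\lambda$-presentable, $\C$ has weak $\lambda$-small colimits, and then appeals to the proof of \cite[Theorem~4.13]{AR} to see that $\Ind\C$ is sketchable by a limit-epi sketch and hence weakly locally finitely presentable. Product-preservation of $H$ is then obtained by citing \cite[Theorem~2.7]{ARV}. Your approach is more self-contained: you build the product $\prod_s X^s$ in $\Ind\C$ explicitly as the directed colimit of the partial products $E(\prod^\LL_s P^s_{i_s})$ (using that $E$, being a right adjoint, preserves the products $\LL$ already has), verify the universal property by testing against the dense subcategory $\C$ and using distributivity in $\Set$, and then read off product-preservation of $H$ directly from the predefinability of $\LL$. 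This makes the role of the hypothesis more transparent and avoids the detour through sketches, at the cost of a small amount of bookkeeping (that the comparison map $H(\prod X^s)\to\prod H(X^s)$ is the isomorphism you produce, not merely some isomorphism). The paper's route, on the other hand, situates the result in the broader framework of limit-epi sketches and explains structurally why $\Ind\C$ is weakly locally finitely presentable rather than merely exhibiting the products. Your remark about fixing $\lambda$ large is harmless but not really needed: $H$ preserves all colimits as a left adjoint, and the directed colimit defining the product is taken in $\Ind\C$, not pushed through $E$.
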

\begin{proof}
Let $\LL$ be a $\lambda$-accessible predefinable category and denote by $\C$ its small full subcategory representing all $\lambda$-presentable objects. We show that $\Ind\C$ is weakly locally finitely presentable and $E:\LL\to\Ind\C$ above is a complete prelocalization. Following \cite[Theorem~4.11]{AR}, $\LL$ is weakly locally $\lambda$-presentable and, following part I of the proof of \cite[Theorem~4.13]{AR}, $\C$ has weak $\lambda$-small colimits. Moreover, following this proof, $\Ind\C$ is sketchable by a limit-epi sketch. Thus it is weakly locally finitely presentable. Following the proof of \cite[Theorem~2.7]{ARV}, left adjoint $H$ to $E$ preserves products. Thus $E$ is a complete prelocalization.

Conversely, let $\LL$ be an accessible complete prelocalization of a weakly locally finitely presentable category $\K$. Let $\A$ be a small full subcategory of $\K$ representing all finitely presentable objects. Then $\K\simeq\Ind\A$ and consider the full embedding $G:\K\to\Set^{\A^{op}}$. Following \cite[Theorem~4.11]{AR}, $G$ makes $\K$ a weakly reflective subcategory of $\Set^{\A^{op}}$ closed under directed colimits. Hence $\K$ is closed under products in $\Set^{\A^{op}}$ (see \cite[Remark~4.5(3)]{AR}) and thus $\K$ is predefinable. The category $\LL$ is accessible as a reflective full subcategory of an accessible category and predefinable following Lemma \ref{le2.3}.
\end{proof}

\begin{rmk}\label{re2.5}
It follows from \cite[Remark~2.5,\,Theorem~2.7]{ARV} that any complete localization of a locally finitely presentable category is an accessible predefinable category.
\end{rmk}

Recall that $\Ind\C$ is weakly locally finitely presentable if and only if $\C$ has weak finite colimits. The following result provides us with a subclass of the class of definable categories where absolute characterization is known.
\begin{pro}
Every weakly locally finitely presentable category is definable.
\end{pro}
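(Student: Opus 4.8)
The plan is to realize a weakly locally finitely presentable category $\LL$ as a definable subcategory of a presheaf category. First I would use the equivalence recalled just above the statement: being weakly locally finitely presentable, $\LL$ is finitely accessible, so $\LL\simeq\Ind\C$ where $\C$ is the (essentially small) full subcategory of finitely presentable objects, and by that recalled fact $\C$ has weak finite colimits. The ambient locally finitely presentable category will be the presheaf category $\Set^{\C^{op}}$, into which $\LL$ embeds via the restricted Yoneda functor $E:\Ind\C\to\Set^{\C^{op}}$. This $E$ is fully faithful, preserves directed colimits, and its essential image is the class of \emph{flat} functors, that is, the closure of the representables under directed colimits. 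It then remains to check that this image is closed in $\Set^{\C^{op}}$ under the three operations defining a definable subcategory.

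Closure under directed colimits is immediate from the description of the image as the directed-colimit closure of the representables. For closure under products I would argue exactly as in the proof of the preceding theorem: by Theorem~\ref{CharWeakLocPres}(4), $E$ exhibits $\LL$ as a weakly reflective, accessibly embedded subcategory of $\Set^{\C^{op}}$ closed under directed colimits, whence it is closed under products by \cite[Remark~4.5(3)]{AR}. (It is precisely the weak finite colimits in $\C$ that guarantee the existence of these products.)

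The crux, and the step I expect to be the main obstacle, is closure under pure subobjects: given a flat $N$ and a pure monomorphism $m:M\to N$ in $\Set^{\C^{op}}$ with $M$ arbitrary, one must show $M$ is flat. The plan is to use the characterization ``$F$ is flat if and only if its category of elements $\mathrm{el}(F)$ is filtered''. Since $m$ is monic, $\mathrm{el}(M)$ is a full subcategory of the filtered category $\mathrm{el}(N)$, and it suffices to show that every finite diagram in $\mathrm{el}(M)$ admits a cocone. Such a finite diagram is encoded by a morphism $A\to M$ with $A$ a finite colimit of representables, hence finitely presentable; pushing it into $\mathrm{el}(N)$ and using that $\mathrm{el}(N)$ is filtered produces a cocone, i.e.\ a representable $B=\C(-,d)$, a morphism $u:A\to B$ of finitely presentable objects, and a map $B\to N$ making the evident square with $m$ commute. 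Purity of $m$ now yields a diagonal $B\to M$ with $(B\to M)\circ u=(A\to M)$, which is exactly a cocone for the original diagram inside $\mathrm{el}(M)$. Hence $\mathrm{el}(M)$ is filtered and $M$ is flat.

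Assembling these facts, the image of $\LL$ under $E$ is closed in the locally finitely presentable category $\Set^{\C^{op}}$ under products, directed colimits and pure subobjects, so it is a definable subcategory and $\LL$ is definable. The only delicate point is the translation of purity into the cocone-lifting in $\mathrm{el}(M)$; everything else is bookkeeping with the standard embedding of a finitely accessible category into its presheaf category. One could alternatively phrase the conclusion through the Corollary, presenting $\LL$ directly as a finite-injectivity class, but the three closure conditions are the most economical route.
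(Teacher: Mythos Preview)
Your argument is correct. The flat-functor computation for closure under pure subobjects is sound: a finite diagram in $\int M$ amounts to a morphism $a:A\to M$ with $A$ a finite colimit of representables (hence finitely presentable), flatness of $N$ lets you factor $m\circ a$ through a representable $\C(-,d)$, and purity of $m$ then produces the diagonal $\C(-,d)\to M$, which is precisely a cocone in $\int M$. Together with closure under directed colimits (tautological) and under products (via \cite[Theorem~4.11]{AR} and \cite[Remark~4.5(3)]{AR}, as in the preceding theorem), this exhibits $\Ind\C$ as a definable subcategory of $\Set^{\C^{op}}$.

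The paper proceeds differently and much more tersely: it observes that both $\Ind\C$ and $\Set^{\C^{op}}$ are finitely accessible and that the inclusion $G$ preserves directed colimits and finitely presentable objects, and then invokes the proof of (ii)$\Rightarrow$(i) in \cite[Theorem~4.8]{AR} to conclude directly that $\Ind\C$ is a finite-injectivity class in $\Set^{\C^{op}}$, hence definable by the preceding corollary. So the paper never touches the three closure conditions separately; it outsources the entire argument to the injectivity machinery in \cite{AR}. Your approach trades that black box for an explicit verification of each closure property, with the category-of-elements argument for pure subobjects as the genuinely new ingredient. The payoff is a self-contained proof that makes the role of purity transparent; the cost is length, and you still lean on \cite{AR} for closure under products. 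If you wanted to align more closely with the paper's strategy, you could note at the end that your three closure properties already force $\Ind\C$ to be a finite-injectivity class by the corollary preceding the proposition---but what you have is fine as it stands.
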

\begin{proof}
Let $\Ind\C$ be weakly locally finitely presentable and consider the inclusion $G:\Ind\C\to\Set^{\C^{op}}$. Since $\Ind\C$ and $\Set^{\C^{op}}$ are finitely accessible and $G$ preserves directed colimits and finitely presentable objects, $\Ind\C$ is a finite-injectivity class in $\Set^{\C^{op}}$ (see the proof of (ii)$\Rightarrow$(i) of \cite[Theorem~4.8]{AR}). 
\end{proof}
 
\subsection{Weakly definable categories}\label{WeakDef}
\begin{definition}
We say that a full subcategory of a locally finitely presentable category is \emph{weakly definable} if it is closed under products and directed colimits. Furthermore, a category is \emph{weakly definable} if it is equivalent to a weakly definable subcategory of a locally finitely presentable category.
\end{definition}

\begin{definition}
Suppose $\K$ is a locally finitely presentable category. An $(\omega,\lambda)$-\emph{injectivity class} in $\K$ is a full subcategory of $\K$ consisting of objects injective to a set of morphisms having finitely presentable domain and $\lambda$-presentable codomain.
\end{definition}

\begin{lemma}\label{le3.4} 
Any $(\omega,\lambda)$-injectivity class is an accessible weakly definable category.
\end{lemma}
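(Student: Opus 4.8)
The plan is to separate the claim into two independent parts: accessibility, which I expect to follow formally from the characterization already recorded as Theorem \ref{CharWeakLocPres}, and weak definability, which amounts to checking closure under products and directed colimits directly from the injectivity conditions. Write $\mathcal{M}=\{h_s\colon A_s\to B_s\}_{s\in S}$ for the defining set of morphisms, with each $A_s$ finitely presentable and each $B_s$ being $\lambda$-presentable, and let $\LL\subseteq\K$ be the full subcategory of objects injective to every $h_s$.

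For accessibility I would simply observe that, since $\mathcal{M}$ is a set, $\LL$ is by definition a small-injectivity class in the locally finitely presentable---hence locally presentable---category $\K$. The implication (1)$\Rightarrow$(3) of Theorem \ref{CharWeakLocPres} then yields that $\LL$ is accessible (indeed accessible with products). This step uses none of the cardinality bounds on the domains or codomains beyond the mere fact that we have a \emph{set} of morphisms.

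For weak definability I would verify the two closure properties inside $\K$. Closure under products is the generic fact about injectivity classes: given $(X_i)_i$ in $\LL$ and $h\colon A\to B$ in $\mathcal{M}$, any $f\colon A\to\prod_i X_i$ has components $\pi_i f$ that factor through $h$ by injectivity of each $X_i$, and assembling these factorizations produces a factorization of $f$ through $h$; hence $\prod_i X_i\in\LL$. Closure under directed colimits is the place where a hypothesis genuinely enters. Let $X=\colim_j X_j$ be a directed colimit of objects of $\LL$ with colimit injections $x_j$, and let $h\colon A\to B$ lie in $\mathcal{M}$. Given $f\colon A\to X$, finite presentability of $A$ lets me factor $f$ as $x_j f'$ through some stage $X_j$; injectivity of $X_j$ gives $f'=g h$ for some $g$, and then $f=(x_j g)h$ exhibits the required factorization, so $X\in\LL$.

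I do not anticipate a genuine obstacle: the argument reduces to assembling the right citation for accessibility and performing two routine factorization arguments. The only point demanding care is to notice which hypotheses are actually used---closure under products is automatic, closure under directed colimits requires precisely the finite presentability of the domains $A_s$, and the $\lambda$-presentability of the codomains $B_s$ plays no role in this particular lemma (it will matter only for later, sharper estimates of the accessibility rank). Combining the two parts shows that $\LL$ is an accessible category closed under products and directed colimits in $\K$, that is, an accessible weakly definable category.
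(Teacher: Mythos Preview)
Your proposal is correct and follows essentially the same approach as the paper: accessibility via a citation to the general theory of small-injectivity classes, and closure under products and directed colimits by direct (routine) verification. The only cosmetic difference is that the paper invokes \cite[Theorem~4.8]{AR} for accessibility rather than Theorem~\ref{CharWeakLocPres} (\cite[Theorem~4.11]{AR}), and simply declares the closure properties ``obvious'' where you spell them out; your remark that the $\lambda$-bound on codomains is unused here is accurate and worth keeping.
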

\begin{proof}
Let $\LL$ be an $(\omega,\lambda)$-injectivity class in a locally finitely presentable category $\K$. Then $\LL$ is accessible by \cite[Theorem~4.8]{AR} and it is obviously closed under products and directed colimits. 
\end{proof}

In this way we get plenty of examples of accessible predefinable categories.

\begin{definition}
We say that a morphism $f:K\to L$ in a locally finitely presentable category $\K$ is $(\omega,\lambda)$-\emph{pure} if, given morphisms $g:A\to B,\ u:A\to K,\ v:B\to L$ such that $f u=v g$ where $A$ is finitely presentable and $B$ is $\lambda$-presentable, there exists $t:B\to K$ such that $t g=u$.
\end{definition}

Any $(\omega,\lambda)$-pure morphism is pure and thus it is a monomorphism.

\begin{rmk}
Any $(\omega,\lambda)$-injectivity class in a locally finitely presentable category is closed under products, directed colimits and $(\omega,\lambda)$-pure subobjects. We do not know whether the converse is true.
\end{rmk}

The following implications are obvious.

Definable $\Rightarrow$ Weakly definable $\Rightarrow$ Accessible predefinable

\begin{ques}
\begin{enumerate}
\item Is every weakly definable category definable?
\item Is a complete definable category locally finitely presentable?
\end{enumerate}
\end{ques}
We expect a negative answer to the first problem because Lemma \ref{le3.4} yields a lot of weakly definable categories $\LL\subseteq\K$ which are not definable in this embedding to $\K$. But it seems to be hard to prove that they are not definable.

Now we give an example of an accessible predefinable category that fails to be weakly definable.
\begin{example}\label{ex4.8}
The lattice $[0,1]$ is an accessible predefinable category (see \cite[Remark~2.6]{ARV} and Remark \ref{re2.5}).

Assume that $[0,1]$ is a full subcategory of a locally finitely presentable category $\K$ closed under products and filtered colimits. The category $\K\simeq\Lex(\fp\K^{op},\Set)$ is a finite-orthogonality class in $(\fp\K^{op},\Set)$, which in turn is a finite-orthogonality class in $\Str\Sigma$ for a finitary $S$-sorted relational signature $\Sigma$ for some set $S$ as described in \cite[Example~1.41]{AR}. Thus $\K$ is a finite orthogonality class in $\Str\Sigma$.

Consider the full embedding $F:[0,1]\to\K\to\Str\Sigma$. Since $Fa=F(a\wedge a)=Fa\times Fa$, $\mathrm{id}_{Fa\times Fa}$ is the only morphism $Fa\times Fa\to Fa\times Fa$. Since the identity on $Fa\times Fa$ equals to the symmetry of  $Fa\times Fa$, the $\Sigma$-structure $Fa$ is a subobject of a terminal $\Sigma$-structure, i.e., the underlying $S$-sorted set $(X_s)_{s\in S}$ of $Fa$ has all $X_s$ either empty or singletons. Thus all compositions $G:[0,1]\to\K\to\Str\Sigma \to\Set^S\to\Set$ are subfunctors of the constant functor on the one-element set $1$.

Consider the set $J$ of all $0\leq a\leq 1$ such that $Ga=\emptyset$. Since $G$ preserves products, the complement of $J$ in $[0,1]$ has the smallest element $j$. Since $G$ preserves directed colimits, we have $j=0$. Thus $G$ is the constant on $1$. Hence the composition $H:[0,1]\to\K\to\Str\Sigma\to\Set^S$ is the constant on the terminal object of $\Set^S$. Any relation symbol $R$ of $\Sigma$ induces a subfunctor of the constant functor $[0,1]\to\Set$ on $1$ which preserves products and filtered colimits. In the same way as above, this subfunctor is the constant functor on $1$. Thus $F$ is the constant functor on the terminal object of $\Str\Sigma$, which is a contradiction. Thus we have proved that $[0,1]$ is not weakly definable.
\end{example}

\section{Duality between $\DEF$ and $\EX$}
The goal of this section is to prove Theorem \ref{DefExDef} which states that definable categories ``are the same as'' the categories of regular functors on small exact categories. This statement gives a $2$-duality theorem.
\subsection{Exact categories and exact completions}
We define exact categories and introduce the concept of the exact completion of a small lex category that was originally introduced in \cite{CM}.
\begin{definition}
In a lex category $\C$ an (internal) equivalence relation on an object $X$ is a subobject $(p_1,p_2):R\rightarrowtail X\times X$ equipped with the following morphisms:
\begin{itemize}
\item (reflexivity) $r:X\to R$ such that $p_1\circ r=p_2\circ r=\mathrm{id}_X$.
\item (symmetry) $s:R\to R$ such that $p_1\circ s=p_2$ and $p_2\circ s=p_1$.
\item (transitivity) $t:R\times_XR\to R$ with projection maps $q_1,q_2:R\times_XR\to R$ such that $p_1\circ q_1=p_1\circ t$ and $p_2\circ q_2=p_2\circ t$.
\end{itemize}
\end{definition}

The kernel pair of a morphism (i.e., the pullback along itself) in a lex category gives an equivalence relation on its domain.

\begin{definitions}
A category $\B$ is regular if it is lex, it admits coequalizers of kernel pairs of all its morphisms, and in which regular epimorphisms are stable under pullbacks.

A functor between $b:\B_1\to\B_2$ two regular categories is regular if it is lex and preserves regular epimorphisms.

A regular category $\B$ is exact (in the sense of Barr) if every equivalence relation is a kernel pair of some morphism.
\end{definitions}
We denote by $\EX$ the $2$-category of small exact categories whose $1$-morphisms are regular functors and $2$-morphisms are natural transformations.

\begin{definition}
The free exact completion of a small lex category $\C$ is an exact category $\C_{ex/lex}$ together with a lex functor $h:\C\to\C_{ex/lex}$ such that $h$ has the following universal property: for any exact category $\B$, there is an equivalence of categories $-\circ h:\Reg(\C_{ex/lex},\B)\to\Lex(\C,\B)$.
\end{definition}

\begin{theorem}\label{ExdefIsDef}
For a small exact category $\B$, the category $\Reg(\B,\Set)$ of regular functors on $\B$ is definable. Moreover, $\B\simeq(\Reg(\B,\Set),\Set)^{\prod\to}$, the category of functors from $\Reg(\B,\Set)$ to $\Set$ that preserve products and directed colimits.
\end{theorem}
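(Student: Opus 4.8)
The plan is to prove the two assertions separately: the definability of $\Reg(\B,\Set)$ I would establish by an injectivity argument, while the reconstruction of $\B$ is a conceptual completeness statement that carries the real weight.

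For the first assertion I would start from the Gabriel--Ulmer picture recalled in the introduction. Since $\B$ is small and lex (being exact), $\K:=\Lex(\B,\Set)$ is locally finitely presentable and its finitely presentable objects are exactly the representables $\B(B,-)$, so $\K\simeq\Ind(\B^{op})$. Inside $\K$ the subcategory $\Reg(\B,\Set)$ consists precisely of those lex functors that send each regular epimorphism of $\B$ to a surjection of sets. The key observation is that, for a regular epi $e\colon X\to Y$, the Yoneda lemma identifies the condition ``$F(e)$ is surjective'' with ``$F$ is injective with respect to $\B(e,-)\colon\B(Y,-)\to\B(X,-)$'', a morphism whose domain and codomain are finitely presentable objects of $\K$. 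As $\B$ is small, its regular epimorphisms form a set, so $\Reg(\B,\Set)$ is the class of objects injective to a set of maps between finitely presentable objects, i.e.\ a finite-injectivity class in a locally finitely presentable category. By the corollary characterizing definable categories as finite-injectivity classes, $\Reg(\B,\Set)$ is definable; in particular it is closed in $\K$ under products and directed colimits.

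For the second assertion write $\LL:=\Reg(\B,\Set)$ and consider the evaluation functor $\mathrm{ev}\colon\B\to(\LL,\Set)^{\prod\to}$, $B\mapsto(F\mapsto F(B))$. First I would check it is well defined: products and directed colimits in $\K$ are computed pointwise, and by the first assertion $\LL$ is closed under them in $\K$, so each $\mathrm{ev}_B$ preserves products and directed colimits. Faithfulness of $\mathrm{ev}$ then amounts to the regular functors $\B\to\Set$ being jointly conservative on the small exact category $\B$, which is the completeness theorem for regular logic (Barr's embedding together with enough $\Set$-valued models). The substantial content is that $\mathrm{ev}$ is full and essentially surjective.

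The hard part will be essential surjectivity together with fullness: one must show that every product- and directed-colimit-preserving functor $G\colon\LL\to\Set$ is isomorphic to some $\mathrm{ev}_B$, and that every natural transformation between evaluation functors is induced by a morphism of $\B$. This is exactly Makkai's strong conceptual completeness for regular logic, recovering $\B$ (as the Morita-equivalence class of its regular theory) from its category $\LL$ of $\Set$-models. I would attack it by exploiting that the functors $\mathrm{ev}_B$ are the restrictions to $\LL$ of the corepresentables $\K(\B(B,-),-)$ attached to the finitely presentable generators of $\K$, and that a $\prod\to$-preserving $G$ is pinned down by its behaviour on these generators once one tracks the ultraproduct/definability data carried by $\LL$. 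Showing that this data forces $G\cong\mathrm{ev}_B$, and analogously that natural transformations descend to morphisms of $\B$, is the crux; it is where the conceptual-completeness machinery (ultraproducts of models, or equivalently the classifying topos whose points are $\LL$) must do the real work.
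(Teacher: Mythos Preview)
Your proposal is correct and follows essentially the same route as the paper. For the first assertion your injectivity argument via Yoneda in $\Lex(\B,\Set)$ is exactly what the paper does; for the second assertion the paper is even terser than you are, simply invoking the finitary case of Makkai's strong conceptual completeness theorem \cite[Theorem~5.1]{MakInf}, which is precisely the result you identify as carrying the real weight.
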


\begin{proof}
Let $\B$ denote a small exact category. The embedding of $\Reg(\B,\Set)$ into the category $\Lex(\B,\Set)$ of lex(= finite limit preserving) functors is fully faithful. Recall from the Gabriel-Ulmer duality that the latter category is locally finitely presentable (see \cite[Theorem~1.46]{AR}).

Let $B_1\xrightarrow{h}B_2$ be a regular epimorphism in $\B$. It is easy to see that a lex functor $F:\B\to\Set$ takes the regular epimorphism $h$ to a regular epimorphism(=surjection) in $\Set$ iff $F$ is injective to the natural transformation $(-)\circ h:\B(B_2,-)\to\B(B_1,-)$ in $\Lex(\B,\Set)$. Taking the set $\M$ of all such natural transformations corresponding to regular epimorphisms in $\B$, we obtain that the regular functors are precisely those which are injective with respect to each morphism in $\M$. Moreover since all representable functors are finitely presentable, we have described $\Reg(\B,\Set)$ as a finite-injectivity class in the locally finitely presentable category $\Lex(\B,\Set)$ and hence it is definable.

The second part is the finitary version of \cite[Theorem~5.1]{MakInf}.
\end{proof}

\begin{cor}\label{ExComplLex}
For a small lex category $\C$, $\C_{ex/lex}\simeq(\Lex(\C,\Set),\Set)^{\prod\to}$.
\end{cor}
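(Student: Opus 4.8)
The plan is to deduce Corollary \ref{ExComplLex} from Theorem \ref{ExdefIsDef} by combining the universal property of the exact completion with the duality exhibited there. First I would recall the defining universal property of $\C_{ex/lex}$: for any exact category $\B$ there is an equivalence $-\circ h:\Reg(\C_{ex/lex},\B)\to\Lex(\C,\B)$. The key observation is that this universal property is stated for an \emph{arbitrary} exact target $\B$, and in particular we may take $\B=\Set$, which is itself exact. This immediately yields an equivalence of categories
\begin{equation*}
\Reg(\C_{ex/lex},\Set)\simeq\Lex(\C,\Set).
\end{equation*}

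Next I would apply Theorem \ref{ExdefIsDef} to the small exact category $\B=\C_{ex/lex}$. That theorem furnishes the second half of the recovery, namely
\begin{equation*}
\C_{ex/lex}\simeq(\Reg(\C_{ex/lex},\Set),\Set)^{\prod\to}.
\end{equation*}
The plan is then simply to substitute the first equivalence into the second. Since the functor $(-,\Set)^{\prod\to}$ is applied to a category and the two categories $\Reg(\C_{ex/lex},\Set)$ and $\Lex(\C,\Set)$ are equivalent, the resulting categories of product- and directed-colimit-preserving functors are likewise equivalent. Chaining these gives
\begin{equation*}
\C_{ex/lex}\simeq(\Reg(\C_{ex/lex},\Set),\Set)^{\prod\to}\simeq(\Lex(\C,\Set),\Set)^{\prod\to},
\end{equation*}
which is exactly the claimed statement.

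The main point requiring care — and the only place where anything beyond a formal substitution is needed — is the verification that an equivalence of categories $\D_1\simeq\D_2$ induces an equivalence $(\D_1,\Set)^{\prod\to}\simeq(\D_2,\Set)^{\prod\to}$. This is a $2$-functoriality statement: precomposition with an equivalence is itself an equivalence on functor categories, and one must check that it restricts correctly to the full subcategories cut out by the conditions of preserving products and directed colimits. Since an equivalence preserves and reflects both products and directed colimits, a functor $\D_2\to\Set$ preserves them if and only if its composite with the equivalence $\D_1\to\D_2$ does; hence precomposition sends $(\D_2,\Set)^{\prod\to}$ into $(\D_1,\Set)^{\prod\to}$ and, together with precomposition along a quasi-inverse, exhibits the desired equivalence. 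This is routine but is the substantive content hidden behind the word ``hence''; everything else is an appeal to the universal property of $\C_{ex/lex}$ with $\B=\Set$ and to Theorem \ref{ExdefIsDef}.
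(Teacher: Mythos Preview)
Your proposal is correct and is exactly the derivation the paper intends: the corollary is stated immediately after Theorem \ref{ExdefIsDef} with no separate proof, and the implicit argument is precisely to instantiate the universal property of $\C_{ex/lex}$ at $\B=\Set$ to obtain $\Reg(\C_{ex/lex},\Set)\simeq\Lex(\C,\Set)$ and then apply the second clause of Theorem \ref{ExdefIsDef} to $\B=\C_{ex/lex}$. Your extra paragraph verifying that $(-,\Set)^{\prod\to}$ respects equivalences is a welcome bit of hygiene but is indeed routine.
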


\subsection{Exactly definable categories}
The aim of this section is to show that a definable category can be recovered from its definable structure, i.e., the $\Set$-valued functors which preserve products and directed colimits.

\begin{definition}(cf. Definition \ref{AddExdef})
A category $\LL$ is exactly definable if there exists a small exact category $\B$ such that $\LL$ is equivalent to the category $\Reg(\B,\Set)$ of regular functors on $\B$.
\end{definition}

Let $\A$ be any category with product and directed colimits. Since products and directed colimits commute with finite limits and regular epimorphisms in $\Set$, the category $\A^+:=(\A,\Set)^{\prod\rightarrow}$ of functors preserving products and directed colimits is an exact category. We have adopted notation $\A^+$ from \cite{HuFlat}.

We work with a definable subcategory $\LL$ of a locally finitely presentable category $\K$. Recall that $\LL$ is $\kappa$-accessible with products and directed colimits for some regular cardinal $\kappa$. \cite[Proposition~5.5]{HuDual} gives that the category of product and $\kappa$-directed colimit preserving functors on $\LL$ is small. Thus $\LL^+$ is a small exact category being a subcategory of this small category.

We begin with a crucial result which is an extension of \cite[Proposition~5.5]{HuDual}. The difference in our version is that the representing object may be external to the definable category.
\begin{pro}\label{key}
Suppose $X\in\LL^+$. There is $K\in\fp\K$ and a regular epimorphism $\eta:\K(K,-)|_\LL\to X$.
\end{pro}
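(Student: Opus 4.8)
The plan is to exhibit every $X\in\LL^+$ as a pointwise-surjective quotient of a representable $\K(K,-)|_\LL$ with $K$ finitely presentable in $\K$, after recording two preliminaries. First, for $K\in\fp\K$ the restricted representable $\K(K,-)|_\LL$ does lie in $\LL^+$: products and directed colimits in $\LL$ are computed as in $\K$, and $\K(K,-)$ preserves limits and (since $K$ is finitely presentable) directed colimits, so the restriction preserves products and directed colimits. Second, a regular epimorphism in the exact category $\LL^+$ is exactly a pointwise surjection: limits in $\LL^+$ are computed pointwise (equalizers and products of product- and directed-colimit-preserving functors are again such), so each evaluation $\mathrm{ev}_M:\LL^+\to\Set$ is a regular functor and the family $(\mathrm{ev}_M)_{M\in\LL}$ reflects regular epimorphisms. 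Hence it suffices to produce $K\in\fp\K$ and a natural transformation $\eta:\K(K,-)|_\LL\to X$ surjective at every $M\in\LL$.

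Next I would build a \emph{generic element}. Since $\LL$ is $\kappa$-accessible and $X$ preserves $\kappa$-directed colimits, $X$ is determined by its restriction to the essentially small category $\LL_\kappa$ of $\kappa$-presentable objects, and each $X(M)$ is a set, so the pairs $(M,z)$ with $M\in\LL_\kappa$ and $z\in X(M)$ form a set $I$. Put $P:=\prod_{i\in I}M_i\in\LL$ and, using that $X$ preserves products, set $z:=(z_i)_{i\in I}\in\prod_{i\in I}X(M_i)=X(P)$. A short computation with the projections $\pi_i:P\to M_i$ shows that for $M\in\LL_\kappa$ every element of $X(M)$ has the form $X(g)(z)$ for some $g:P\to M$, and then, since $X$ preserves $\kappa$-directed colimits and each $M\in\LL$ is a $\kappa$-directed colimit of objects of $\LL_\kappa$, the same holds for all $M\in\LL$. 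Thus the transformation $\LL(P,-)\to X$ classifying $z$ by Yoneda is a pointwise surjection. This reproves the internal cover of \cite[Proposition~5.5]{HuDual}; the essential remaining task is to replace the internal representable $\LL(P,-)$, which need not lie in $\LL^+$, by an external one $\K(K,-)|_\LL$ with $K\in\fp\K$.

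For this step I would use the adjunction between restriction $\iota^*\colon[\K,\Set]\to[\LL,\Set]$ along $\iota\colon\LL\hookrightarrow\K$ and its right Kan extension $\mathrm{Ran}_\iota$, giving a natural bijection $\mathrm{Nat}(\K(K,-)|_\LL,X)\cong(\mathrm{Ran}_\iota X)(K)$; a transformation out of $\K(K,-)|_\LL$ is thus a compatible family $(\theta_{M,f}\in X(M))_{f:K\to M}$, and such a family is surjective precisely when it realizes $z$ generically. The aim is to choose $K\in\fp\K$ together with such a family lifting $z$. Concretely I would write $P=\colim_\gamma K_\gamma$ as a directed colimit of finitely presentable $K_\gamma\in\fp\K$ and try to descend the generic datum to a finite stage, expecting that product-preservation bounds $X$ tightly enough against the representables that some finite $K_\gamma$ already carries all of $z$.

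The hard part is exactly this descent, and the obstacle is twofold. The objects $K_\gamma$ are not in $\LL$, so $X$ is not even defined on them and one cannot naively write $z\in X(P)=\colim_\gamma X(K_\gamma)$; and the variance of $\mathrm{Ran}_\iota X$ runs against a direct factorization, since $\K(P,-)|_\LL=\lim_\gamma\K(K_\gamma,-)|_\LL$ is a \emph{codirected} limit and a surjection out of a limit need not factor through a projection. I would resolve this through the defining feature of $\LL$ as a finite-injectivity class, equivalently its closure under pure subobjects: transported through a suitable pure embedding, the generic element can be realized over the (weak) reflection of a single finitely presentable $K\in\fp\K$, and the resulting compatible family supplies the required $\eta$. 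Verifying that this $\eta$ is still pointwise surjective—that the finite stage $K$ genuinely captures all of $z$—is where purity is indispensable and is the crux of the argument.
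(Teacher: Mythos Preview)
Your proposal tracks the paper through the construction of the generic element: the product $P=\overline{L}=\prod_{B\in\B}B^{X(B)}$, the element $z=a\in X(\overline{L})$ obtained from product-preservation, and the pointwise surjection $\LL(\overline{L},-)\to X$ it classifies. This part is fine and essentially identical to the paper.

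The gap is at the descent. You isolate the formal obstacle---$\K(P,-)|_\LL=\lim_\gamma\K(K_\gamma,-)|_\LL$ is a cofiltered limit in $(\LL,\Set)$, and a map out of it need not factor through a leg---but your proposed resolution via purity and weak reflections never becomes an argument. You do not say which pure embedding, which reflection, or how either manufactures a \emph{natural} transformation $\eta:\K(K,-)|_\LL\to X$ with $\eta_{\overline L}(e)=z$. Purity constrains how objects of $\LL$ sit in $\K$; it is not clear how it extends a transformation from the image of $-\circ e_\gamma$ to all of $\K(K_\gamma,-)|_\LL$. The crux you name remains open in your own proposal.

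The paper's proof uses neither purity nor reflections. It asserts the chain
\[
X(\overline{L})\;\simeq\;(\LL,\Set)(y'\overline{L},X)\;\simeq\;(\LL,\Set)(\lim_s y'K_s,X)\;\simeq\;\colim_s(\LL,\Set)(y'K_s,X)
\]
(the last step justified only as ``from the universal property of limits''), and reads off from the last term a stage $s$ and $\eta:y'K_s\to X$ with $\eta_{\overline{L}}(e_s)=a$; surjectivity of $\eta$ then follows immediately from $X(\pi_j)(a)=j$ together with $\kappa$-accessibility. The paper even remarks parenthetically that directed-colimit preservation of $X$ is not used. The final displayed isomorphism is exactly the step you flagged as doubtful, so the comparison is stark: the paper claims the factorization you feared impossible exists by a direct Hom-computation, while your alternative route never materializes. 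If you believe the obstacle is genuine, the burden is on you either to justify that last isomorphism (for instance by identifying $(\LL,\Set)(y'K_s,X)$ with the limit of $X$ over $K_s/\LL$ and analyzing the induced colimit) or to actually carry out your purity-based construction of $\eta$; the proposal does neither.
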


\begin{proof}
(It is interesting to note that, in this proof, we do not use the assumption that $X$ preserves directed colimits.) For each object $K\in\K$, the restriction $y'K:\LL\to\Set$ of the representable functor $\K(K,-)$ to $\LL$ preserves all limits present in $\LL$ and hence, in particular, products. Moreover, if $K$ is finitely presentable, then $y'K$ preserves both products and directed colimits, and hence belongs to $\LL^+=(\LL,\Set)^{\prod\to}$.

Without loss we can assume that both $\K$ and $\LL$ are skeletal so that the full subcategory  $\B:=\mathrm{Pres}_\kappa\LL$ of $\kappa$-presentable objects in $\LL$ is a small category, where $\kappa$ is a regular cardinal such that $\LL$ is $\kappa$-accessible.

Given $X:\LL\to\Set$, consider the small product $\overline{L}:=\prod_{B\in\B}B^{X(B)}$ in $\LL$. Let $J:=\bigsqcup_{B\in\B}X(B)$. Denote for $j\in J$ the product projections by $\pi_j:\overline{L}\to B$. Since $X$ preserves products, we have $X(\overline{L})=\prod_{B\in\B}X(B)^{X(B)}$ and that $X(\pi_j)$ is the product projection in $\LL$. There is $a\in X(\overline{L})$ such that $X(\pi_j)(a)=j$ for all $j\in J$.

Since $\K$ is locally finitely presentable, we can write $\overline{L}$ as the directed colimit, $\colim K_s$, of finitely presentable objects in $\K$ with colimit maps $e_s:K_s\to\overline{L}$. 

There are natural bijections
\begin{eqnarray*}
X(\overline{L}) &\simeq & (\LL,\Set)(y'\overline{L},X)\\
                &\simeq & (\LL,\Set)(y'(\colim K_s),X)\\
                &\simeq & (\LL,\Set)(\lim(y'K_s),X)\\
                &\simeq & \colim(\LL,\Set)(y'K_s,X).
\end{eqnarray*}
The first bijection follows from Yoneda lemma, the second from the fact that the restricted Yoneda embedding $y':\K^{op}\to(\LL,\Set)$ preserves limits, and the final from the universal property of limits.

Thus there is some $s$ and a natural transformation $\eta:y'K_s\to X$ in $\LL^+$ such that $\eta_{\overline{L}}(e_s)=a$. We denote $K_s$ by $K$ and $e_s$ by $e$. Since $X(\pi_j)\circ\eta_{\overline{L}}=\eta_B\circ y'K(\pi_j)$, we have that $j=X(\pi_j)(a)=X(\pi_j)(\eta_{\overline{L}}(e))=\eta_B(\pi_j\circ e)$. Hence for each $B\in\B$ the function $\eta_B$ is surjective. Since every object of $\LL$ is a $\kappa$-directed colimit of objects of $\B$, we observe that $\eta_L$ is surjective for all $L\in\LL$.

Finally observe that $\LL^+$ is Barr-exact and the inclusion $\LL^+\to(\LL,\Set)$ is regular. In the latter $\eta$ being a regular epimorphism means that $\eta_L$ is surjective for all $L\in\LL$. Thus we conclude that $\eta$ is a regular epimorphism.
\end{proof}

We note an immediate consequence of this result.
\begin{cor}\label{Quo}
The functor $Q:\K^+\to\LL^+$ given by precomposition with the inclusion $\LL\to\K$ is an epimorphism in the category $\EX$ of small exact categories and regular functors.
\end{cor}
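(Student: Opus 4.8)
The plan is to read Proposition~\ref{key} as a \emph{covering} statement and then extract the epimorphism property from it purely at the level of $2$-cells. First I would record that $Q$ is a legitimate $1$-morphism of $\EX$: since $\LL$ is closed in $\K$ under products and directed colimits, precomposition with the inclusion sends a functor preserving products and directed colimits to one again preserving products and directed colimits, so $Q$ does land in $\LL^+$; and as both $\K^+$ and $\LL^+$ are small exact (by the discussion preceding Proposition~\ref{key}) with exact structure computed from pointwise finite limits and regular epimorphisms, the restriction functor $Q$ is regular.

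The key step is to observe that $Q$ is covering. Given $X\in\LL^+$, Proposition~\ref{key} supplies $K\in\fp\K$ and a regular epimorphism $\eta:\K(K,-)|_\LL\to X$. Now $\K(K,-)$ lies in $\K^+$, since a representable functor on a finitely presentable object preserves products and directed colimits, and $Q(\K(K,-))=\K(K,-)|_\LL=y'K$. Hence every object of $\LL^+$ receives a regular epimorphism from an object in the image of $Q$.

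From covering I would deduce that whiskering by $Q$ is faithful on $2$-cells, which is the epimorphism property in $\EX$. Concretely, let $G_1,G_2:\LL^+\to\D$ be regular functors and let $\theta,\theta':G_1\Rightarrow G_2$ be natural transformations with $\theta Q=\theta' Q$. For $X\in\LL^+$ choose a cover $q:Q(\K(K,-))\twoheadrightarrow X$ as above. Naturality gives $\theta_X\circ G_1 q=G_2 q\circ\theta_{Q(\K(K,-))}$, and likewise for $\theta'$; since $\theta_{Q(\K(K,-))}=(\theta Q)_{\K(K,-)}=(\theta' Q)_{\K(K,-)}=\theta'_{Q(\K(K,-))}$ and $G_1 q$ is a regular (hence an) epimorphism, we conclude $\theta_X=\theta'_X$. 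As $X$ was arbitrary, $\theta=\theta'$, so $Q$ is an epimorphism.

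The main obstacle is the temptation to prove the stronger \emph{descent} statement, namely that every natural isomorphism $G_1 Q\cong G_2 Q$ extends to one $G_1\cong G_2$. This would require controlling such an isomorphism on all morphisms of $\LL^+$ between objects in the image of $Q$, not merely on the image morphisms, and $Q$ is in general not full: a restricted-Yoneda map $\K(K',-)|_\LL\to\K(K,-)|_\LL$ need not arise from any morphism $K\to K'$ in $\K$, because $\LL$ may fail to see it. Attempting to descend an isomorphism along a cover $q$ reduces, via the effective kernel pair $R\rightarrowtail Q(\K(K,-)\times\K(K,-))$ and a further cover of $R$, to naturality at precisely these non-image maps, and the reduction recurses without terminating. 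This is why I would phrase the argument through faithfulness of whiskering, which the covering property of Proposition~\ref{key} delivers with no fullness hypothesis, rather than through the descent of isomorphisms.
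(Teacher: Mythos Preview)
Your argument is sound for what it proves, namely that whiskering by $Q$ is faithful on $2$-cells, but this is not the statement being claimed. The corollary, as the paper's own proof and its subsequent use in Theorem~\ref{DefExDef} make clear, asserts that $Q$ is an epimorphism in the underlying $1$-category: for regular functors $P_1,P_2:\LL^+\to\B$, if $P_1\circ Q=P_2\circ Q$ then $P_1\simeq P_2$. Faithfulness of $(-)\circ Q$ on $2$-cells only tells you that a given natural transformation $\theta:P_1\Rightarrow P_2$ is determined by $\theta Q$; it does not manufacture any isomorphism $P_1\simeq P_2$ from the bare hypothesis $P_1 Q=P_2 Q$. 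In Theorem~\ref{DefExDef} the paper really uses the $1$-categorical version: from $G\circ Q\simeq ev_K$ it concludes that $G$ itself is determined. Your result does not deliver that.

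The paper's route is to apply Proposition~\ref{key} \emph{twice}. The first application covers $X$ by a restricted representable via a regular epimorphism $\rho:y'K\twoheadrightarrow X$; taking the kernel pair $\rho_1,\rho_2:Y\rightrightarrows y'K$ and applying the proposition again covers $Y$ by some $y'K'$, so that $X$ is exhibited as the coequalizer of a parallel pair $y'K'\rightrightarrows y'K$ between restricted representables. Because regular functors preserve regular epimorphisms and finite limits, they send this configuration to a coequalizer diagram, and hence $P_1(X)$ and $P_2(X)$ are recovered from the values of $P_1,P_2$ on restricted representables together with that parallel pair. Your single application of the proposition stops at the cover and never reaches a presentation of $X$, which is exactly why you can only constrain $2$-cells at $X$ and not the objects $P_i(X)$ themselves. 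The concern you raise in your last paragraph about $Q$ failing to be full on morphisms between restricted representables is a genuine subtlety in the paper's argument, but it is a separate issue; the structural step you are missing is the second covering and the resulting coequalizer presentation, not a redefinition of ``epimorphism''.
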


\begin{proof}
Proposition \ref{key} states that for each $X\in\LL^+$, there is a $K\in\fp\K$ with a regular epimorphism $\rho:\K(K,-)|_\LL\to X$. Thus $\rho$ is a coequalizer of $\rho_1,\rho_2:Y\to\K(K,-)|_\LL$ for some $Y\in\LL^+$. Applying the proposition again for $Y$, we find $K'\in\fp\K$ and a regular epimorphism $\rho':\K(K',-)|_\LL\to Y$. Clearly $\rho$ is also the coequalizer of $\rho_1\circ\rho,\rho_2\circ\rho:\K(K',-)|_\LL\to\K(K,-)|_\LL$. Hence every object of $\LL^+$ is (the codomain of) the coequalizer of arrows between restricted representables.

Suppose $\B$ is a regular category with regular functors $P_1,P_2:\LL^+\to\B$ such that $P_1\circ Q=P_2\circ Q$. Then for each $K\in\fp\K$, we have $P_1(Q(\fp\K(K,-)))=P_2(Q(\fp\K(K,-)))$, equivalently, $P_1(\K(K,-)|_\LL) =P_2(\K(K,-)|_\LL)$. In other words, the functors $P_1$ and $P_2$ agree on the full subcategory of $\LL^+$ consisting of the restricted representables. Since $P_1$ and $P_2$ are regular functors, they preserve coequalizers of kernel pairs. The discussion in the first paragraph gives that coequalizers of the required kernel pairs coincides with the coequalizers of certain pairs of maps between restricted representables. It follows that $P_1\simeq P_2$.
\end{proof}

Now we are ready to state and prove the main result.
\begin{theorem}\label{DefExDef}
For a definable category $\LL$, there is an equivalence $\LL\simeq\Reg(\LL^+,\Set)$. In particular, a category is definable if and only if it is exactly definable. 
\end{theorem}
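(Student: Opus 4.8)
The plan is to construct the \emph{evaluation functor} $\mathrm{E}\colon\LL\to\Reg(\LL^+,\Set)$ sending an object $L$ to $\mathrm{ev}_L\colon\LL^+\to\Set$, $X\mapsto X(L)$, and a morphism $f$ to the natural transformation with components $X(f)$, and then to prove that $\mathrm{E}$ is an equivalence; the ``in particular'' clause is immediate from this. First I would check that $\mathrm{E}$ is well defined: since $\LL^+$ is closed under finite limits in $(\LL,\Set)$ and its regular epimorphisms are exactly the pointwise surjections (both facts are used already in the proof of Proposition~\ref{key}), evaluation at a fixed $L$ preserves finite limits and regular epimorphisms, so each $\mathrm{ev}_L$ is regular. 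The same pointwise computation, together with the fact that every $X\in\LL^+$ preserves products and directed colimits, shows that $\mathrm{E}$ itself preserves products and directed colimits, i.e. $\mathrm{E}$ is an interpretation functor.

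For \emph{faithfulness}: if $\mathrm{E}(f)=\mathrm{E}(g)$, then testing against the restricted representables $y'K=\K(K,-)|_\LL$ with $K\in\fp\K$ gives $\K(K,f)=\K(K,g)$ for all finitely presentable $K$, and since these objects form a dense generator of $\K$ this forces $f=g$. For \emph{fullness}: given $\alpha\colon\mathrm{ev}_L\to\mathrm{ev}_{L'}$, its components on the $y'K$ assemble into a natural transformation $\K(-,L)|_{\fp\K}\to\K(-,L')|_{\fp\K}$, which by density of $\fp\K$ in $\K$ is induced by a unique $f\colon L\to L'$ in $\K$, hence in $\LL$, with $\alpha_{y'K}=\mathrm{E}(f)_{y'K}$. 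To promote the equality $\alpha=\mathrm{E}(f)$ from restricted representables to all of $\LL^+$ I would invoke Proposition~\ref{key}: each $X$ carries a regular epimorphism $\eta\colon y'K\twoheadrightarrow X$ with $K\in\fp\K$, and naturality of both $\alpha$ and $\mathrm{E}(f)$ against $\eta$, together with surjectivity of $\eta_L$, yields $\alpha_X=\mathrm{E}(f)_X$.

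The real work is \emph{essential surjectivity}, and I would route it through the restriction functor $Q\colon\K^+\to\LL^+$ of Corollary~\ref{Quo}. By Corollary~\ref{ExComplLex} one has $\K^+\simeq\C_{ex/lex}$ for $\C=(\fp\K)^{op}$, and the universal property of the exact completion gives $\Reg(\K^+,\Set)\simeq\Lex(\C,\Set)\simeq\K$, the equivalence being evaluation $L\mapsto\mathrm{ev}_L$ (checked on the representables $\K(K,-)$); this is independent of the theorem being proved, so there is no circularity. Given $F\in\Reg(\LL^+,\Set)$, the composite $F\circ Q$ lies in $\Reg(\K^+,\Set)$, so there is an object $L\in\K$ with $F\circ Q\simeq\mathrm{ev}_L$, equivalently $F(y'K)\simeq\K(K,L)$ naturally in $K\in\fp\K$.

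The main obstacle is to show that this reconstructed $L$ actually lies in the definable subcategory $\LL$ and not merely in $\K$. Here I would use that $\LL$ is a finite-injectivity class in $\K$: there is a set $\M$ of morphisms $m\colon A_m\to B_m$ between finitely presentable objects with $\LL$ consisting of exactly those objects $L'$ for which $\K(m,L')$ is surjective for all $m\in\M$. For each such $m$ the morphism $y'(m)\colon y'B_m\to y'A_m$ is, at every object of $\LL$, the map $\K(m,-)$, which is surjective precisely because objects of $\LL$ are $m$-injective; hence $y'(m)$ is a pointwise surjection, i.e. a regular epimorphism in $\LL^+$. As $F$ is regular it sends $y'(m)$ to a surjection, and that surjection is $F(y'(m))\simeq\K(m,L)$; thus $\K(m,L)$ is surjective for every $m\in\M$, so $L\in\LL$. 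Finally $\mathrm{E}(L)\circ Q$ and $F\circ Q$ are both isomorphic to $\mathrm{ev}_L$ on $\K^+$, and since $Q$ is an epimorphism in $\EX$ (Corollary~\ref{Quo}) we conclude $F\simeq\mathrm{E}(L)$, so $\mathrm{E}$ is essentially surjective and hence an equivalence. The ``in particular'' statement follows at once: a definable $\LL$ has $\LL^+$ small and exact with $\LL\simeq\Reg(\LL^+,\Set)$, so it is exactly definable, while the converse direction is exactly Theorem~\ref{ExdefIsDef}.
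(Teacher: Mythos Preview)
Your proposal is correct and follows essentially the same route as the paper: both arguments pass through the restriction functor $Q\colon\K^+\to\LL^+$, identify $\Reg(\K^+,\Set)\simeq\K$ via the exact-completion universal property (Corollary~\ref{ExComplLex}), and then use the finite-injectivity description of $\LL$ together with the fact that $Q(-\circ m)$ is a regular epimorphism to pin the reconstructed object inside $\LL$. The only difference is packaging: the paper deduces full faithfulness in one stroke from ``precomposition with the epimorphism $Q$ gives an embedding of $\Reg(\LL^+,\Set)$ into $\Reg(\K^+,\Set)$'', whereas you unwind this into explicit faithfulness and fullness arguments via restricted representables and Proposition~\ref{key}.
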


\begin{proof}
Let $\LL$ be a definable subcategory of a locally finitely presentable category $\K$. Since $\K^+$ is the exact completion of the lex category $\fp\K^{op}$ (Corollary \ref{ExComplLex}), the universal property gives that $\K\simeq\Lex(\fp\K^{op},\Set)\simeq\Reg(\K^+,\Set)$. In other words, for each $K\in\K$, the evaluation functor $ev_K:\K^+\to\Set$ defined by $ev_K(X):=X(K)$ is regular, and each regular functor on $\K^+$ is of this form.

Let $Q:\K^+\to\LL^+$ denote the functor induced by the inclusion of $\LL$ in $\K$. Precomposition with the epimorphism $Q$ gives an embedding of $\Reg(\LL^+,\Set)$ into $\Reg(\K^+,\Set)$. Hence each regular functor $\LL^+\to\Set$ is of the form $ev_K$ for some $K\in\K$.

It is easy to verify that if $K\in\LL$, then $ev_K:\LL^+\to\Set$ is indeed a regular functor. We show the converse, namely that if $ev_K$ factors through $Q$, then $K\in\LL$. Let $G:\LL^+\to\Set$ be the factorization of $ev_K$ through $Q$, i.e., $ev_K=G\circ Q$.

Let $h:K_1\to K_2$ be a morphism in $\M$, where $\LL=\M\mbox{-}\mathrm{Inj}$. Consider the morphism $-\circ h:\K(K_2,-)\to\K(K_1,-)$. It is easy to see that $Q(-\circ h)$ is a regular epimorphism in $\LL^+$. Since $G$ is a regular functor, it maps regular epimorphisms to surjections in $\Set$. Hence $ev_K(-\circ h)=G(Q(-\circ h))$ is a surjection for each $h\in\M$. In other words, $K\in\M\mbox{-}\mathrm{Inj}=\LL$.

The second part follows from the first part and Theorem \ref{ExdefIsDef}. 
\end{proof}

This generalizes the finitary version of \cite[Theorem~5.10]{HuDual}.

We showed in Theorem \ref{ExdefIsDef} that for a small exact category $\B$, there is an equivalence $\B\simeq\Reg(\B,\Set)^+$, whereas in Theorem \ref{DefExDef} we showed that for any definable category $\LL$, there is an equivalence $\LL\simeq\Reg(\LL^+,\Set)$. Given a regular functor $b:\B_1\to\B_2$ between small exact categories, there is an obvious interpretation functor $-\circ b:\Reg(\B_2,\Set)\to\Reg(\B_1,\Set)$. Conversely since products and directed colimits commute with all finite limits and regular epimorphisms in $\Set$ we obtain that, given an interpretation functor $F:\LL_1\to\LL_2$ between definable categories, the  functor $-\circ F:\LL_2^+\to\LL_1^+$ is a regular functor. We summarize our results in the form of a duality below.
\begin{theorem}\label{EXDEFDual}
There is a $2$-equivalence between $2$-categories
\begin{equation*}
\Reg(-,\Set):\EX^{op}\leftrightarrows\DEF:(-,\Set)^{\prod\to}.
\end{equation*}
\end{theorem}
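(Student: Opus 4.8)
The plan is to exhibit the two assignments in the statement as strict $2$-functors $\Phi:=\Reg(-,\Set):\EX^{op}\to\DEF$ and $\Psi:=(-,\Set)^{\prod\to}:\DEF\to\EX^{op}$, and then to invoke the standard fact that a pair of $2$-functors together with pseudonatural equivalences $\Psi\Phi\simeq\Id_{\EX^{op}}$ and $\Phi\Psi\simeq\Id_\DEF$ constitutes a $2$-equivalence. Almost all of the mathematical content is already in hand: the object-level equivalences are precisely Theorem \ref{ExdefIsDef} (giving $\B\simeq\Reg(\B,\Set)^+$ for small exact $\B$) and Theorem \ref{DefExDef} (giving $\LL\simeq\Reg(\LL^+,\Set)$ for definable $\LL$). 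What remains is the bookkeeping that promotes these to a full $2$-dimensional statement.

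First I would define $\Phi$ and $\Psi$ on $1$- and $2$-cells by precomposition. A $1$-cell $\B_1\to\B_2$ of $\EX^{op}$ is a regular functor $b:\B_2\to\B_1$, and $\Phi(b):=-\circ b:\Reg(\B_1,\Set)\to\Reg(\B_2,\Set)$; as recorded in the paragraph preceding the theorem, this lands among interpretation functors because products and directed colimits commute with finite limits and regular epimorphisms in $\Set$. Dually $\Psi(F):=-\circ F:\LL_2^+\to\LL_1^+$ is regular for every interpretation functor $F:\LL_1\to\LL_2$. On $2$-cells both act by whiskering. Since precomposition is strictly functorial (one checks in a line that whiskering respects vertical and horizontal composition, and that the direction-reversal in $\EX^{op}$ is matched by $(F\circ b)\circ b'=F\circ(b\circ b')$), $\Phi$ and $\Psi$ are genuine strict $2$-functors, so no further coherence data beyond the two equivalences below is needed.

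Next I would promote the object-wise equivalences of Theorems \ref{ExdefIsDef} and \ref{DefExDef} to pseudonatural equivalences. The component at $\LL$ of $\eta:\Id_\DEF\Rightarrow\Phi\Psi$ is the evaluation equivalence $\LL\to\Reg(\LL^+,\Set)$, $L\mapsto ev_L$, of Theorem \ref{DefExDef}. Pseudonaturality in an interpretation functor $F:\LL_1\to\LL_2$ compares $ev_{FL}$ with $(\Phi\Psi F)(ev_L)$; but $\Phi\Psi(F)$ is precomposition by $\Psi F=-\circ F$, so for $X\in\LL_2^+$ one has $ev_L(X\circ F)=(X\circ F)(L)=X(FL)=ev_{FL}(X)$, whence $ev_{FL}=ev_L\circ(-\circ F)=(\Phi\Psi F)(ev_L)$ on the nose. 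Thus the naturality comparison is an identity and the relevant square commutes strictly; the same computation with the evaluation equivalence $\B\simeq\Reg(\B,\Set)^+$ of Theorem \ref{ExdefIsDef} yields $\epsilon:\Id_{\EX^{op}}\Rightarrow\Psi\Phi$. Feeding $\Phi,\Psi,\eta,\epsilon$ into the characterization of biequivalences then gives Theorem \ref{EXDEFDual}.

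The main obstacle is not a single deep step but the disciplined verification that the precomposition operations are well defined on the nose and that the two evaluation equivalences are simultaneously natural in the object and compatible with the direction-reversal built into $\EX^{op}$. The one point that genuinely uses the earlier theory, rather than formal nonsense, is the essential surjectivity of $\Phi$ on $1$-cells that is implicitly forced here: every interpretation functor $\Reg(\B_1,\Set)\to\Reg(\B_2,\Set)$ must be isomorphic to some $-\circ b$. In the present route this is delivered automatically by $\Psi$ together with $\epsilon$, since applying $(-)^+$ to such a functor and identifying $\Reg(\B_i,\Set)^+\simeq\B_i$ via Theorem \ref{ExdefIsDef} produces the required regular functor $b:\B_2\to\B_1$; verifying that $-\circ b$ recovers the original functor is then exactly the triangle identity relating $\eta$ and $\epsilon$, which the strict naturality computed above makes routine.
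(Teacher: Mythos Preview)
Your proposal is correct and follows essentially the same approach as the paper: the paper simply records the object-level equivalences from Theorems \ref{ExdefIsDef} and \ref{DefExDef} together with the observation that precomposition sends regular functors to interpretation functors and vice versa, and declares this a $2$-duality without spelling out the pseudonaturality or coherence. Your write-up is a faithful expansion of exactly that argument, supplying the $2$-categorical bookkeeping (strictness of precomposition, strict naturality of evaluation) that the paper leaves to the reader.
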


\section{The additive case}
\cite[Proposition~11.1]{PreAMS} states that additive definable categories ``are the same as'' the categories of exact functors on small abelian categories, which in turn gives an appropriate $2$-duality. This is the additive analogue of Theorem \ref{DefExDef}. The goal of this section is to give a new proof of this statement as Theorem \ref{AddDefExDef} using methods developed here to prove its non-additive counterpart. See \cite{Pre32} and \cite{PreAMS} for a detailed exposition of the literature in the additive case.

All the categories in this section are preadditive, and for two preadditive categories $\A,\A'$, the notation $[\A,\A']$ will denote the category of additive functors $\A\to\A'$ and natural transformations between them. Recall that the term `left $\A$-module' describes a covariant functor $\A\to\Ab$ for any small preadditive category $\A$ (i.e., a ring many objects). The notations $\Modl\A$ and $\fpmodl\A$ denote the category of left $\A$-modules and its full subcategory of finitely presented modules respectively.

\subsection{Free abelian completion}
Recall that an additive exact category is necessarily abelian. A functor between two abelian categories is exact if it preserves short exact sequences, or, equivalently, finite limits and colimits. We denote by $\ABEX$ the $2$-category of small abelian categories whose $1$-morphisms are exact functors and $2$-morphisms are natural transformations.

\begin{definition}
The \emph{free abelian completion} of a small preadditive category $\A$ is a small abelian category $Ab(\A)$ together with an additive functor $h:\A\to Ab(\A)$ such that $h$ has the following universal property: for any abelian category $\B$, there is an equivalence of categories $-\circ h:\Ex[Ab(\A),\B]\to[\A,\B]$.
\end{definition}

Since $\Ab$ is a Grothendieck category, by Grothendieck's $AB4^*$ and $AB5$ axioms, product and directed colimit of short exact sequences is again short exact; this is the additive version of the `regular logical doctrine'. Therefore if $\D$ is any additive category with products and directed colimits, then the category $\D^+:=[\D,\Ab]^{\prod\to}$ of functors preserving products and directed colimits is an abelian category.

The existence of the free abelian completion follows from a very general result of Freyd \cite{Freyd}. We present a special case in the form useful for our purposes; the proof relies on a special case of a result we prove in the next section.
\begin{pro}\label{AddExComplLex}
The free abelian completion of a small preadditive category $\A$ is equivalent to $[\Modl\A,\Ab]^{\prod\to}$, the category of functors from $\Modl\A$ to $\Ab$ that preserve products and directed colimits.
\end{pro}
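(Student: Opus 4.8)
The plan is to mirror the proof of the non-additive Corollary~\ref{ExComplLex}, replacing the exact completion by the free abelian completion and $\Set$ by $\Ab$. Two ingredients are needed. The first is the universal property of $Ab(\A)$ itself, which, applied to the (large) abelian target $\Ab$, identifies the category of exact functors on $Ab(\A)$ with the category of left $\A$-modules. The second is the additive analogue of the second assertion of Theorem~\ref{ExdefIsDef}, namely that every small abelian category $\B$ is recovered from its category of exact functors to $\Ab$ via $\B\simeq\Ex[\B,\Ab]^+=[\Ex[\B,\Ab],\Ab]^{\prod\to}$; this is the ``special case of a result we prove in the next section'' promised above, being the additive incarnation of the reconstruction underlying Theorem~\ref{AddDefExDef}.

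Granting these, the argument is a short chain of equivalences. First I would instantiate the universal property of $h:\A\to Ab(\A)$ at the target $\Ab$, obtaining an additive equivalence $\Ex[Ab(\A),\Ab]\simeq[\A,\Ab]=\Modl\A$. Next I would apply the additive counterpart of Theorem~\ref{ExdefIsDef} to the small abelian category $\B=Ab(\A)$, which gives $Ab(\A)\simeq[\Ex[Ab(\A),\Ab],\Ab]^{\prod\to}$. Substituting the first equivalence into the second yields $Ab(\A)\simeq[\Modl\A,\Ab]^{\prod\to}$, which is the claim. One should check that these equivalences are genuinely additive and sufficiently natural so that the composite is again an equivalence of abelian categories, but this is routine.

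The real content of the statement is carried by the deferred result $\B\simeq\Ex[\B,\Ab]^+$, so the principal obstacle lies in the next section rather than here. Within the present proof, the points demanding care are size-theoretic: the universal property of the free abelian completion must be applied with the large Grothendieck target $\Ab$ rather than a small abelian category, and one must confirm the resulting identification $\Ex[Ab(\A),\Ab]\simeq\Modl\A$. One also needs to know that $\Modl\A=[\A,\Ab]$ is a Grothendieck (indeed locally finitely presentable) category, so that it has products and directed colimits and the functor category $[\Modl\A,\Ab]^{\prod\to}=(\Modl\A)^+$ is defined and abelian; this is exactly the additive form of the ``regular logical doctrine'' recorded before the statement, that products and directed colimits of short exact sequences are short exact by $AB4^*$ and $AB5$. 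Given these facts, the identification $(\Modl\A)^+\simeq\Ex[Ab(\A),\Ab]^+$, and hence the final equivalence, follow formally.
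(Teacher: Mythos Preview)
Your argument has a circularity problem within the paper's logical architecture. You propose to derive Proposition~\ref{AddExComplLex} from the reconstruction $\B\simeq\Ex[\B,\Ab]^+$, identifying this as the promised ``special case of a result we prove in the next section''. But that reconstruction is Theorem~\ref{AddMakDua}, and the paper's proof of Theorem~\ref{AddMakDua} explicitly invokes Proposition~\ref{AddExComplLex} (to identify $Ab(\B)\simeq\Modl\B^+$); Theorem~\ref{AddDefExDef} likewise relies on Proposition~\ref{AddExComplLex}. The forward reference the paper actually has in mind is Proposition~\ref{Addkey}, applied in the special case $\D=\Modl\A$. One could try to sidestep the circularity by importing $\B\simeq\Ex[\B,\Ab]^+$ directly from \cite{PreAMS}, but that undermines the stated aim of this section, which is to reprove the additive duality using the methods developed here for the non-additive case; Proposition~\ref{AddExComplLex} is meant to serve as a foundation for Theorems~\ref{AddDefExDef} and~\ref{AddMakDua}, not as a consequence of them.

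The paper's actual argument is direct and more elementary. It begins from the concrete description $Ab(\A)\simeq\fp{[\fpmodl\A,\Ab]}$ (quoted from \cite{PreAMS}) and reduces the claim to showing that a functor $F:\Modl\A\to\Ab$ lies in $(\Modl\A)^+$ if and only if $F|_{\fpmodl\A}$ is finitely presentable in $[\fpmodl\A,\Ab]$. One direction is easy: a finitely presentable functor is a cokernel of a map between representables at objects of $\fpmodl\A$, such representables preserve all limits, and cokernels commute with products in $\Ab$. For the converse one applies Proposition~\ref{Addkey} to $F$ to obtain an epimorphism from a representable at some $A_1\in\fpmodl\A$, and then applies it again to the kernel to obtain a two-term presentation. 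Thus Proposition~\ref{AddExComplLex} depends only on Proposition~\ref{Addkey}, whose proof is self-contained (it mirrors Proposition~\ref{key}), and is then available as input to the later theorems rather than presupposing them.
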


\begin{proof}
\cite[Theorem~4.3]{PreAMS} describes $\fp{[\fpmodl\A,\Ab]}$ as the free abelian category on the small preadditive category $\A$. Thus it is enough to show that an additive functor $F:\Modl\A\to\Ab$ preserves products and directed colimits iff its restriction to $\fpmodl\A$ is finitely presentable.

If $F|_{\fpmodl\A}:\fpmodl\A\to\Ab$ is finitely presentable, then there is an exact sequence $\fpmodl\A(A_2,-)\to\fpmodl\A(A_1,-)\to F|_{\fpmodl\A}\to 0$ for some $A_1,A_2\in\fpmodl\A$. The corresponding sequence $\Modl\A(A_2,-)\to\Modl\A(A_1,-)\to F\to 0$ is also exact. Representables at finitely presentable objects preserve all small limits and hence, in particular, products. Since cokernels commute with products in $\Ab$, we conclude that $F$ preserves products.

Now suppose $F$ preserves products and directed colimits. Proposition \ref{Addkey} applied to $F$ with $\D=\Modl\A$ gives $A_1\in\fpmodl\A$ with a short exact sequence $\Modl\A(A_1,-)\to F\to 0$. The kernel $F_1$ of this epimorphism is again a functor preserving products, and hence applying Proposition \ref{Addkey} to $F_1$ gives a short exact sequence $\Modl\A(A_2,-)\to F_1\to 0$. Thus we have a short exact sequence $\Modl\A(A_2,-)\to\Modl\A(A_1,-)\to F\to 0$. Restricting it to $\fpmodl\A$ gives a finite presentation of $F|_{\fpmodl\A}$.
\end{proof}

\subsection{Definable additive categories}
Let $L_\omega$ denote the finitary logic. Let $R$ denote a ring with unity and $L_R$ denote the one-sorted language of left $R$-modules.

Recall that a positive primitive formula (pp-formula for short) is a formula of the form $\exists\overline{y}\ \theta(\overline{x}\ \overline{y})$ where $\theta$ is a conjunction of atomic formulas. The main result in the model theory of modules is the Baur-Monk pp-elimination theorem which states that every formula in the language $L_R$ is a finite boolean combination of pp-formulas.

\begin{definitions}\cite[\S 3.4.1]{PrePSL}\label{DefAddDef}
A full subcategory $\D$ of $\Modl R$ is \emph{definable} if there is a set $\{\phi_\lambda(x)/\psi_\lambda(x)\}_{\lambda\in\Lambda}$ of pairs of pp-formulas in one variable satisfying $\phi_\lambda(x)\rightarrow\psi_\lambda(x)$  for each $\lambda$ such that $\D$ is precisely the subcategory of modules such that $\phi_\lambda(M)=\psi_\lambda(M)$ for each $\lambda\in\Lambda$.
\end{definitions}

In other words, if $\mathbb{T}$ denotes the above set of pp-pairs, then $\D$ is equivalent to the category $\mathbb{T}\mbox{-}\mathrm{Mod}(\Ab)$ of $\Ab$-models of $\mathbb{T}$, and hence the term.

\begin{theorem}\cite[Theorem~3.4.7]{PrePSL}\label{DefAddChar}
The following conditions on an isomorphism-closed subcategory $\mathcal D$ of $\Modl R$ are equivalent:
\begin{enumerate}
\item $\D$ is definable.
\item $\D$ is closed under products, directed colimits and pure submodules.
\item $\D$ is closed under products, reduced products and pure submodules.
\item $\D$ is closed under products, ultrapowers and pure submodules.
\end{enumerate}
Such a subcategory is, in particular, closed under direct sums and direct summands.
\end{theorem}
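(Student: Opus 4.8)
The plan is to establish the cycle $(1)\Rightarrow(2)\Rightarrow(3)\Rightarrow(4)\Rightarrow(1)$ and then the final closure assertion, the whole argument resting on how positive primitive formulas behave under the three operations. First I would record three facts. (a) For a pp-formula $\phi(\bar x)=\exists\bar y\,\theta(\bar x,\bar y)$, with $\theta$ a conjunction of linear equations, solvability in a product is coordinatewise, so $\phi(\prod_iM_i)=\prod_i\phi(M_i)$; note also that a zero tuple satisfies every pp-formula. (b) A pp-formula commutes with directed colimits, since a tuple together with a witness for the existential quantifier is finitely much data and so already lives at some stage. (c) A submodule $N\le M$ is pure precisely when $\phi(N)=N^{n}\cap\phi(M)$ for every pp-formula $\phi$. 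Granting these, $(1)\Rightarrow(2)$ is immediate: if $\phi_\lambda(M)=\psi_\lambda(M)$ for all $M\in\D$, then (a) and (b) propagate the equality to products and directed colimits, while (c) gives $\phi_\lambda(N)=N\cap\phi_\lambda(M)=N\cap\psi_\lambda(M)=\psi_\lambda(N)$ for a pure submodule $N$.

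For $(2)\Rightarrow(3)$ I would invoke, exactly as in the non-additive proposition on reduced products proved earlier in the paper, that a reduced product is a directed colimit of sub-products, so closure under products and directed colimits forces closure under reduced products; purity-closure is common to both conditions. The step $(3)\Rightarrow(4)$ is trivial, an ultrapower being a reduced product of a constant family. The serious content is $(4)\Rightarrow(1)$, and here I would first upgrade ``products plus ultrapowers'' to closure under arbitrary ultraproducts. Given $(N_i)_{i\in I}$ in $\D$ and an ultrafilter $\U$ on $I$, the map sending $[(x_i)]_\U$ to $[(p_k)_{k\in I}]_\U$, where $p_k\in\prod_iN_i$ has $x_k$ in coordinate $k$ and $0$ elsewhere, is a pure embedding of the ultraproduct $\prod_iN_i/\U$ into the ultrapower $(\prod_iN_i)^{I}/\U$; checking that it preserves and reflects every pp-formula uses fact (a), the fact that $0$ satisfies every pp-formula, and the theorem of {\L}o\'{s}. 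Since $\prod_iN_i\in\D$, closure under ultrapowers and pure submodules puts $\prod_iN_i/\U$ in $\D$. This is presumably why ultrapowers alone suffice in condition~(4).

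With ultraproducts in hand, let $\mathbb{T}$ be the set of all pp-pairs $\phi/\psi$ (with $\phi\to\psi$) that are closed on $\D$, meaning $\phi(N)=\psi(N)$ for every $N\in\D$, and let $\D'$ be the subcategory they define; clearly $\D\subseteq\D'$, and the task is the reverse inclusion. Fixing $M\in\D'$, I would realize its pp-diagram --- the sentences $\phi(\underline{\bar a})$ with $M\models\phi(\bar a)$ together with $\neg\phi(\underline{\bar a})$ with $M\not\models\phi(\bar a)$, in the language with a constant per element of $M$ --- inside $\D$; a model of this diagram is exactly a module receiving a pure embedding of $M$. A finite fragment involves one tuple $\bar a$, finitely many holding pp-formulas with conjunction $\chi$, and finitely many failing formulas $\phi_1,\dots,\phi_s$. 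For each $j$ the pair $(\chi\wedge\phi_j)/\chi$ is \emph{not} closed on $\D$: were it, $M\in\D'$ would give $\chi(M)\subseteq\phi_j(M)$, contradicting $\bar a\in\chi(M)\setminus\phi_j(M)$; so some $N_j\in\D$ carries $\bar b_j\in\chi(N_j)\setminus\phi_j(N_j)$, and in $N:=\prod_jN_j\in\D$ the tuple $(\bar b_j)_j$ lies in $\chi(N)$ but, by fact~(a), outside each $\phi_j(N)$, realizing the fragment. An ultraproduct of these $N$ over the fragments, against an ultrafilter for which every diagram formula is eventually present, yields by {\L}o\'{s} a single $N^{\ast}\in\D$ realizing the whole diagram, hence a pure embedding $M\hookrightarrow N^{\ast}$; pure-submodule closure gives $M\in\D$, so $\D=\D'$ is definable, the passage from finitely-many-variable to one-variable pp-pairs of Definition~\ref{DefAddDef} being the standard reduction. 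I expect this compactness realization of the pp-diagram within $\D$ to be the main obstacle, the delicate point being to realize each finite fragment by a \emph{single} tuple in a member of $\D$ avoiding all the negated formulas simultaneously, which the finite product construction handles. Finally, direct summands are pure submodules, and $\bigoplus_iM_i$ is a pure submodule of $\prod_iM_i$ as the directed colimit of the split finite sub-products, so closure under products and pure submodules delivers the last assertion.
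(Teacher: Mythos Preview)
The paper does not prove this theorem; it is quoted verbatim from \cite[Theorem~3.4.7]{PrePSL} with no argument supplied, so there is nothing to compare your proposal against within the paper itself.

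That said, your proposal reconstructs essentially the standard model-theoretic proof found in \cite{PrePSL}. The implications $(1)\Rightarrow(2)\Rightarrow(3)\Rightarrow(4)$ are routine, and your treatment is correct. For $(4)\Rightarrow(1)$ your upgrade from ultrapowers to ultraproducts via the pure embedding $\prod_i N_i/\U\hookrightarrow(\prod_i N_i)^I/\U$, sending $[(x_i)]$ to $[(p_k)]$ with $p_k$ supported only in coordinate~$k$, is a clean device: the verification that it reflects pp-formulas hinges exactly on the fact that the zero tuple satisfies every pp-formula, as you note. The compactness realization of the pp-diagram is also correctly handled; your finite-product trick to satisfy $\chi$ while avoiding all of $\phi_1,\dots,\phi_s$ simultaneously is the right move, and it works because failure of $\phi_j$ in a single coordinate suffices for failure in the product. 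The only point you leave as a ``standard reduction'' --- passing from pp-pairs in several free variables to pp-pairs in one variable as in Definition~\ref{DefAddDef} --- is indeed standard in \cite{PrePSL} but is not entirely content-free, so a pointer to the relevant result there would strengthen the write-up. The closing remarks on direct sums and summands are fine.
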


(We do not know whether it is possible to obtain (4) of the above theorem as a characterization of non-additive definable categories.)

As a consequence each definable category has an internal theory of purity \cite[Remark~10.3]{PreAMS} which coincides with the external theory (as a subcategory).

More generally, given a small preadditive category $\A$, we say that a full subcategory $\D$ of $\Modl\A$ is definable if it satisfies either of the equivalent conditions (2)-(4) of the theorem above. Furthermore, a definable additive category is a category equivalent to a definable subcategory of $\Modl\A$ for some $\A$.

An additive functor $F:\A\to\A'$ between definable categories is an \emph{interpretation functor} if it preserves products and directed colimits; in notation $F\in[\A,\A']^{\prod\to}$. We denote by $\DEFA$ the $2$-category of definable additive categories whose $1$-morphisms are interpretation functors, and $2$-morphisms are natural transformations.

Since a locally $\lambda$-presentable additive category is locally $\lambda$-presentable, the proof of \cite[Theorem~2.2]{RAB} can be easily adapted to the additive case to obtain the following.
\begin{pro}\label{AddDefInj}
$\D$ is a definable additive category iff it is equivalent to a finite-injectivity class in a module category $\Modl\A$ for a small preadditive category $\A$.
\end{pro}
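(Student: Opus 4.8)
The plan is to transfer the non-additive characterization to the additive setting by realising $\Modl\A=[\A,\Ab]$ as a locally finitely presentable category and invoking \cite[Theorem~2.2]{RAB} in the case $\lambda=\aleph_0$. First I would record the three facts that make this transfer possible: (i) for a small preadditive category $\A$ the module category $\Modl\A$ is locally finitely presentable, its $\aleph_0$-presentable objects are exactly the finitely presented modules $\fpmodl\A$, and every morphism of $\Modl\A$ is an $\A$-module homomorphism; (ii) the directed (= $\aleph_0$-directed) colimits computed categorically agree with the module-theoretic ones; and (iii) the categorical notion of pure subobject of \cite{AR} coincides on $\Modl\A$ with the module-theoretic notion of pure submodule used in Theorem \ref{DefAddChar}. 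Granting (i)--(iii), a full subcategory of $\Modl\A$ is definable in the sense of this section (closed under products, directed colimits and pure submodules) precisely when it is a definable subcategory of the locally finitely presentable category $\Modl\A$ in the original sense of Section~2.

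With this dictionary in place the proposition becomes the $\lambda=\aleph_0$ instance of \cite[Theorem~2.2]{RAB}: a full subcategory of $\Modl\A$ is closed under products, directed colimits and pure subobjects iff it is a finite-injectivity class, i.e.\ an injectivity class with respect to a set of morphisms whose domains and codomains are finitely presentable. Since these witnessing morphisms already live in $\Modl\A$, they are automatically $\A$-module maps between objects of $\fpmodl\A$, so no separate additivity argument is required; this is precisely the ``easy adaptation'' alluded to before the statement. Combining this with the dictionary of the previous paragraph yields both implications of the proposition at once.

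The reverse direction of \cite[Theorem~2.2]{RAB} is routine and I would reproduce it directly: every injectivity class is closed under products and directed colimits, and a finite-injectivity class is closed under pure submodules, since for $h\colon A\to B$ with $A,B$ finitely presented, an $h$-injective $N$ and a pure embedding $M\rightarrowtail N$, any $u\colon A\to M$ extends along $h$ to some $t\colon B\to N$ after composition into $N$, and purity together with the finite presentability of $B$ produces a factorisation $B\to M$ extending $u$, so $M$ is $h$-injective as well. The substantive, and hence main, obstacle is the forward direction, which is the core of the proof of \cite[Theorem~2.2]{RAB}: one must manufacture, for a subcategory $\D$ satisfying the closure conditions, a \emph{set} of morphisms between finitely presented modules whose injectivity class is exactly $\D$. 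I would run RAB's construction verbatim inside $\Modl\A$, the only additive-specific remark being that every object and morphism it produces is additive because $\Modl\A$ is; verifying fact (iii) and checking that this construction respects additivity are the two places where genuine (if standard) care is needed.
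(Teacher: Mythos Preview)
Your proposal is correct and follows exactly the approach indicated by the paper: the paper does not give a self-contained proof but simply remarks that, since $\Modl\A$ is locally finitely presentable, the proof of \cite[Theorem~2.2]{RAB} adapts directly, which is precisely the transfer you carry out via the dictionary (i)--(iii) and the explicit invocation of the $\lambda=\aleph_0$ case. Your additional unpacking of the easy direction and the observation that the RAB construction stays inside $\Modl\A$ automatically are exactly the ``easy adaptation'' the paper alludes to.
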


\subsection{Duality between $\DEFA$ and $\ABEX$}
\begin{definition}\cite{Kra}\label{AddExdef}
A category $\D$ is an \emph{exactly definable additive category} if it is equivalent to the category $\Ex[\B,\Ab]$ of exact functors on a small abelian category $\B$.
\end{definition}

The class of exactly definable additive categories is contained in the class of definable additive categories.
\begin{pro}\cite[Theorem~6.1]{PreAMS}\label{AddExdefIsDef}
For a small abelian category $\B$, the category $\Ex[\B,\Ab]$ of exact functors on $\B$ is a definable subcategory of $\Modl\B$.
\end{pro}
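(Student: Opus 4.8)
The plan is to prove the statement in complete analogy with the non-additive Theorem \ref{ExdefIsDef}, by exhibiting $\Ex[\B,\Ab]$ as a subcategory of $\Modl\B=[\B,\Ab]$ that is closed under products, directed colimits and pure submodules; definability then follows at once from conditions (2)--(4) of Theorem \ref{DefAddChar} applied to the small preadditive category $\B$. First I would record the standing facts: $\Modl\B$ is locally finitely presentable, with the representable functors $\B(B,-)$ ($B\in\B$) as finitely presentable projective generators and with the Yoneda isomorphism $[\B,\Ab](\B(B,-),F)\cong FB$; and an additive $F\colon\B\to\Ab$ lies in $\Ex[\B,\Ab]$ exactly when it sends every short exact sequence $\sigma\colon 0\to B_1\xrightarrow{f}B_2\xrightarrow{g}B_3\to 0$ in $\B$ to a short exact sequence in $\Ab$.

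Closure under products and directed colimits is immediate from the additive doctrine already recorded before Proposition \ref{AddExComplLex}: these (co)limits in $[\B,\Ab]$ are computed pointwise, and by $AB4^*$ (respectively $AB5$) a product (respectively directed colimit) of short exact sequences of abelian groups is again short exact. Hence if each $F_i$ preserves $\sigma$, so do $\prod_i F_i$ and $\colim_i F_i$, and both constructions again lie in $\Ex[\B,\Ab]$.

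The substantive step is closure under pure submodules. Let $\iota\colon F\rightarrowtail G$ be a pure monomorphism in $\Modl\B$ with $G$ exact; I must show $F$ is exact. Fixing $\sigma$ and applying $F$ and $G$, naturality of $\iota$ yields a morphism from the complex $FB_1\to FB_2\to FB_3$ to the exact sequence $GB_1\to GB_2\to GB_3$, each component $\iota_{B_i}$ being a monomorphism. Left exactness of $F$ comes for free, using only that $\iota$ is monic and $G$ left exact: $\iota_{B_2}\circ Ff=Gf\circ\iota_{B_1}$ is monic, whence $Ff$ is monic. For exactness in the middle and for surjectivity of $Fg$ I would invoke the defining lifting property of pure monomorphisms in a locally finitely presentable category (\cite{AR}): for any $u\colon P\to Q$ between finitely presentable objects and any square $\iota a=bu$ with $a\colon P\to F$, $b\colon Q\to G$, there is $t\colon Q\to F$ with $tu=a$. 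Translating through Yoneda, an element $x\in\ker(Fg)\subseteq FB_2$ corresponds to $\hat x\colon\B(B_2,-)\to F$; since $\iota_{B_2}(x)\in\ker(Gg)=\operatorname{im}(Gf)$ there is $\hat y\colon\B(B_1,-)\to G$ with $\iota\hat x=\hat y\circ\B(f,-)$, where $\B(f,-)\colon\B(B_2,-)\to\B(B_1,-)$. Purity applied to the square with $u=\B(f,-)$ gives $t\colon\B(B_1,-)\to F$ with $t\circ\B(f,-)=\hat x$, i.e.\ an $x''\in FB_1$ with $Ff(x'')=x$; this is exactly $\ker(Fg)\subseteq\operatorname{im}(Ff)$. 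An entirely parallel argument, applied to $\B(g,-)\colon\B(B_3,-)\to\B(B_2,-)$ and an element $z\in FB_3$ lifted first along the epimorphism $Gg$, produces a preimage of $z$ under $Fg$, giving surjectivity of $Fg$. Thus $F$ preserves $\sigma$, and $F\in\Ex[\B,\Ab]$.

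The main obstacle is precisely this last step: keeping the variances straight when translating the kernel/image conditions in $\Ab$ into lifting squares against the contravariant representables $\B(B_i,-)$, and verifying in each instance that the test square genuinely commutes and has finitely presentable corners so that purity applies. Once the two lifting instances are set up correctly the conclusion is formal; no estimates or delicate computations are required, and the whole argument reduces to bookkeeping with the Yoneda embedding $B\mapsto\B(B,-)$ and the exactness of $G$.
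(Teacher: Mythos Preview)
Your argument is correct, but it follows a genuinely different route from the paper's. The paper proceeds by exhibiting $\Ex[\B,\Ab]$ directly as a finite-injectivity class in $\Modl\B$: first it quotes \cite[Example~1.33(8)]{AR} to get $\Lex[\B,\Ab]$ as a finite-orthogonality class, then \cite[Remark~4.4(1)]{AR} to upgrade this to a finite-injectivity class (since $\Modl\B$ has pushouts), and finally observes---exactly as in Theorem~\ref{ExdefIsDef}---that a lex $F$ sends a given epimorphism $h\colon B_1\to B_2$ to a surjection iff $F$ is injective with respect to $\B(h,-)\colon\B(B_2,-)\to\B(B_1,-)$. Definability then follows from Proposition~\ref{AddDefInj}. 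Your approach instead verifies the three closure conditions of Theorem~\ref{DefAddChar} by hand, with the purity lifting property doing the real work for pure subobjects. The trade-off: the paper's proof is shorter and yields the explicit injectivity-class presentation (which is what the surrounding development actually uses), while yours is self-contained and avoids the external citations to \cite{AR}. Note, incidentally, that your two lifting arguments are \emph{exactly} the injectivity conditions the paper writes down---injectivity with respect to $\B(f,-)$ and $\B(g,-)$---so the two proofs converge at the level of the test morphisms even though the packaging differs.

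One small terminological quibble: what you call ``left exactness coming for free'' is only exactness at $FB_1$ (monicity of $Ff$); genuine left exactness also requires exactness at $FB_2$, which you correctly establish afterwards via purity. The mathematics is fine, just relabel that sentence.
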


\begin{proof}
Let $\B$ be a small abelian category. The category $\Lex[\B,\Ab]$ of additive lex functors on $\B$ is a finite orthogonality class in $\Modl\B$ \cite[Example~1.33(8)]{AR}. The module category $\Modl\B$ is cocomplete and hence contains all pushouts. Thus by \cite[Remark~4.4(1)]{AR}, $\Lex[\B,\Ab]$ is a finite-injectivity class in $\Modl\B$. The explicit description of the category $\Ex[\B,\Ab]$ as a finite-injectivity class in $\Lex[\B,\Ab]$ is analogous to the proof of Theorem \ref{ExdefIsDef}. Thus $\Ex[\B,\Ab]$ is a finite-injectivity class in $\Modl\B$.
\end{proof}

The correct non-additive analogue of a module category is a presheaf category, so one ought to study definable subcategories of presheaf categories rather than those of locally finitely presentable categories. But a proof similar to the one above shows that we obtain the exactly same class of definable categories in both cases because all locally finitely presentable categories are reflective subcategories of presheaf categories.

We work with a definable subcategory $\D$ of the module category $\Modl\A$. Analogous to the non-additive case, we argue that the category $\D^+$ of product and directed colimit preserving functors on $\D$ is a small abelian category.

The following is the additive analogue of Proposition \ref{key} with essentially the same proof.
\begin{pro}\label{Addkey}
Suppose $X\in\D^+$. There is $K\in\fpmodl\A$ and a short exact sequence $\Modl\A(K,-)|_\D\xrightarrow{\eta}X\to 0$.
\end{pro}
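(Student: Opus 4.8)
The plan is to mirror the proof of Proposition \ref{key} in the additive setting, replacing ``regular epimorphism'' by ``short exact sequence ending in $0$'' and the exact category $\LL^+$ by the abelian category $\D^+$. Throughout I assume, as established just before the statement, that $\D$ is a definable subcategory of $\Modl\A$, that $\D$ is $\kappa$-accessible with products and directed colimits for some regular cardinal $\kappa$, and that $\D^+$ is a small abelian category.

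First I would set up the restricted representables. For each object $K$ in $\Modl\A$, the restriction $y'K:\D\to\Ab$ of the representable $\Modl\A(K,-)$ to $\D$ preserves all limits present in $\D$, in particular products; and when $K$ is finitely presented, $y'K$ additionally preserves directed colimits, so $y'K\in\D^+$. Passing to a skeleton, let $\B:=\mathrm{Pres}_\kappa\D$ be the small full subcategory of $\kappa$-presentable objects of $\D$. Given $X\in\D^+$, I would form the small product $\overline{L}:=\prod_{B\in\B}B^{X(B)}$ in $\D$, set $J:=\bigsqcup_{B\in\B}X(B)$ with projections $\pi_j:\overline{L}\to B$, and use that $X$ preserves products to find an element $a\in X(\overline{L})$ with $X(\pi_j)(a)=j$ for every $j\in J$ (here the additive analogue of choosing a distinguished tuple works exactly as in the non-additive proof).

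Next I would express $\overline{L}$ as a directed colimit $\colim K_s$ of finitely presented objects in $\Modl\A$ with colimit maps $e_s:K_s\to\overline{L}$, which is possible since $\Modl\A$ is locally finitely presentable. The same chain of natural bijections as in Proposition \ref{key}---Yoneda, preservation of limits by the restricted Yoneda embedding $y':(\Modl\A)^{op}\to[\D,\Ab]$, and the universal property of the directed colimit---yields
\begin{equation*}
X(\overline{L})\simeq\colim_s\,[\D,\Ab](y'K_s,X).
\end{equation*}
Hence there is some $s$ and a natural transformation $\eta:y'K_s\to X$ with $\eta_{\overline{L}}(e_s)=a$. Writing $K:=K_s$, $e:=e_s$ and using $X(\pi_j)\circ\eta_{\overline{L}}=\eta_B\circ y'K(\pi_j)$, I conclude $j=X(\pi_j)(a)=\eta_B(\pi_j\circ e)$, so each $\eta_B$ is surjective for $B\in\B$; since every object of $\D$ is a $\kappa$-directed colimit of objects of $\B$ and directed colimits are exact in $\Ab$, the components $\eta_L$ are surjective for all $L\in\D$. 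Finally, because $\D^+$ is abelian and the inclusion $\D^+\to[\D,\Ab]$ is exact, an epimorphism in $\D^+$ is precisely a componentwise surjection; therefore $\eta$ is an epimorphism, giving the desired exact sequence $\Modl\A(K,-)|_\D\xrightarrow{\eta}X\to 0$.

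The only point requiring additive-specific care---and the step I expect to be the main obstacle---is the transfer of ``$\eta$ is a componentwise surjection implies $\eta$ is an epimorphism in $\D^+$.'' In the non-additive case this rested on $\LL^+$ being Barr-exact with a regular inclusion into the functor category; here I must instead verify that the inclusion $\D^+\hookrightarrow[\D,\Ab]$ is exact, which is what guarantees that cokernels (and hence the notion of epimorphism) in $\D^+$ are computed componentwise. This follows from the fact, noted earlier via the $AB4^*$ and $AB5$ axioms, that products and directed colimits of short exact sequences in $\Ab$ are again short exact, so the class of product- and directed-colimit-preserving functors is closed under the relevant (co)kernels. Once this exactness is in place, the remainder is a line-by-line additive rerun of Proposition \ref{key}.
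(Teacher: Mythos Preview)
Your proposal is correct and matches the paper's approach exactly: the paper does not give a separate proof of Proposition~\ref{Addkey} but simply states that it is ``the additive analogue of Proposition~\ref{key} with essentially the same proof,'' and what you have written is precisely that additive rerun. Your identification of the one additive-specific point---that exactness of the inclusion $\D^+\hookrightarrow[\D,\Ab]$ (via $AB4^*$ and $AB5$) replaces the ``regular inclusion'' argument---is the right translation of the final step.
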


Corollary \ref{Quo} has the following additive analogue with similar proof. A minor simplification in the additive case is the use of kernels instead of kernel pairs.
\begin{cor}\label{AddQuo}
The functor $Q:\Modl\A^+\to\D^+$ given by precomposition with the inclusion $\D\to\Modl\A$ is an epimorphism in the category $\ABEX$ of small abelian categories and exact functors.
\end{cor}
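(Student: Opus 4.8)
The plan is to transcribe the proof of Corollary~\ref{Quo} into the additive setting, using Proposition~\ref{Addkey} in place of Proposition~\ref{key} and replacing kernel pairs by kernels. First I note that $Q$ is itself exact: since finite limits and colimits in $\Ab$ are computed pointwise and commute with restriction along $\D\hookrightarrow\Modl\A$, precomposition sends kernels and cokernels in $\Modl\A^+$ to kernels and cokernels in $\D^+$. Thus it suffices to show that whenever $P_1,P_2\colon\D^+\to\B$ are exact functors into a small abelian category $\B$ with $P_1\circ Q=P_2\circ Q$, then $P_1\simeq P_2$.

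The first step is a presentation lemma. Given $X\in\D^+$, Proposition~\ref{Addkey} supplies $K\in\fpmodl\A$ and an epimorphism $\eta\colon\Modl\A(K,-)|_\D\to X$. Its kernel $X_1$ again lies in $\D^+$, because kernels are computed pointwise and finite limits commute with products and directed colimits in $\Ab$; applying Proposition~\ref{Addkey} to $X_1$ and composing with the inclusion $X_1\hookrightarrow\Modl\A(K,-)|_\D$ yields a morphism $g\colon\Modl\A(K',-)|_\D\to\Modl\A(K,-)|_\D$ between restricted representables with $X=\operatorname{coker}g$. This is exactly where the additive simplification enters: an epimorphism in the abelian category $\D^+$ is automatically the cokernel of its kernel, so no kernel pairs are required. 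Hence every object of $\D^+$ is a cokernel of a morphism between restricted representables, and the same recipe presents morphisms of $\D^+$ as maps of cokernel presentations.

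Each restricted representable is $Q(\Modl\A(K,-))$, so $P_1$ and $P_2$ agree on these objects and on every morphism between them lying in the image of $Q$; since $P_1,P_2$ are exact they preserve the cokernels above, so once agreement is known on the presentation morphisms $g$ it propagates to all of $\D^+$ and gives $P_1\simeq P_2$. The main obstacle is precisely this last point: $g$ need not lie in the image of $Q$, because $Q$ is \emph{not} full on representables in general. For instance, taking $\D$ to be the category of $\mathbb{Q}$-vector spaces inside $\Modl{\mathbb{Z}}=\Ab$, the restricted representable of $\mathbb{Z}$ has endomorphism group $\mathbb{Q}$ in $\D^+$, whereas only the subgroup $\mathbb{Z}$ comes from module endomorphisms of $\mathbb{Z}$. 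I would resolve this by recognizing $Q$ as a Serre (Gabriel) quotient functor with kernel $\{F\in\Modl\A^+:F|_\D=0\}$: Proposition~\ref{Addkey} shows that the restricted representables generate $\D^+$ under cokernels, while a morphism such as ``multiplication by $\tfrac12$'' is the inverse of the $Q$-image of ``multiplication by $2$'', which becomes invertible in $\D^+$. Consequently every morphism of $\D^+$ is a fraction $Q(f)\circ Q(s)^{-1}$ with $Q(s)$ an isomorphism; as exact functors preserve isomorphisms and $P_1,P_2$ agree on all $Q$-images, they agree on every such fraction, in particular on each $g$. This upgrades the object-level agreement to a natural isomorphism $P_1\simeq P_2$ and shows that $Q$ is an epimorphism in $\ABEX$.
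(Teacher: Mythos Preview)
Your overall shape matches the paper's intended argument exactly: apply Proposition~\ref{Addkey} twice to present each $X\in\D^+$ as a cokernel of a map $g$ between restricted representables (kernels replacing kernel pairs, as the paper notes), then use exactness of $P_1,P_2$ to propagate agreement. That part is fine and is all the paper claims to do.

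Where you go beyond the paper is in isolating a genuine subtlety that the template proof of Corollary~\ref{Quo} passes over: from $P_1\circ Q=P_2\circ Q$ one obtains agreement of $P_1,P_2$ only on objects and on morphisms \emph{in the image of} $Q$, whereas the presentation morphism $g$ need not lie there. Your $\mathbb{Q}\hookrightarrow\Modl{\mathbb{Z}}$ example is exactly right and shows that the restricted Yoneda functor $\fpmodl\A^{\,op}\to\D^+$ is not full in general, so the step ``$P_1,P_2$ agree on the full subcategory of restricted representables'' is not automatic.

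The gap in your own proposal is the resolution. You assert that $Q$ is a Serre quotient and hence that every morphism of $\D^+$ is a fraction $Q(f)\circ Q(s)^{-1}$, but you do not prove this; the single observation that ``$\tfrac12=Q(2)^{-1}$'' is an example, not an argument. Establishing the calculus of fractions (equivalently, that the induced exact functor $\Modl\A^+/\ker Q\to\D^+$ is full) is at least as hard as the epimorphism statement you are trying to prove, so as written the proposal is circular at this point. If you want to pursue this route you must give an independent proof of fullness---for instance by lifting an arbitrary natural transformation between restricted representables to $\Modl\A^+$ after precomposing with a suitable $Q$-isomorphism---rather than invoking the Serre-quotient description as a black box.
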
 

Now we are ready to state the promised analogue of Theorem \ref{DefExDef}.
\begin{theorem}\cite[Corollary~10.11,\,Proposition~11.1]{PreAMS} \label{AddDefExDef}
For a definable additive category $\D$, there is an equivalence $\D\simeq\Ex[\D^+,\Ab]$. In particular, a category is definable if and only if it is exactly definable. 
\end{theorem}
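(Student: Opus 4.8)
The plan is to mirror the non-additive proof of Theorem~\ref{DefExDef} line by line, substituting kernels for kernel pairs and short exact sequences for coequalizer diagrams wherever the exact structure is invoked. Let $\D$ be a definable subcategory of $\Modl\A$. The starting point is that $\D^+$ is a small abelian category (established above) and that $\Modl\A^+$ is the free abelian completion of the lex category $\fpmodl\A^{op}$; by Proposition~\ref{AddExComplLex} together with Proposition~\ref{AddExComplLex}'s universal property we get $\Modl\A\simeq[\fpmodl\A,\Ab]\simeq\Ex[\Modl\A^+,\Ab]$. Concretely, each module $M$ gives an evaluation functor $ev_M:\Modl\A^+\to\Ab$, $ev_M(X):=X(M)$, which is exact, and every exact functor on $\Modl\A^+$ arises this way.

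Next I would use Corollary~\ref{AddQuo}: the restriction functor $Q:\Modl\A^+\to\D^+$ is an epimorphism in $\ABEX$. Precomposition with $Q$ therefore induces a full embedding $\Ex[\D^+,\Ab]\hookrightarrow\Ex[\Modl\A^+,\Ab]\simeq\Modl\A$, so every exact functor $\D^+\to\Ab$ is of the form $ev_M$ for some $M\in\Modl\A$. The whole content then reduces to pinning down which modules $M$ give rise to exact functors that factor through $Q$: I claim this happens exactly when $M\in\D$.

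The two inclusions are proved separately. For $M\in\D\Rightarrow ev_M$ factors through $Q$, the point is that an object of $\D$ only sees the restricted functors, so $ev_M$ descends along $Q$; this is the routine direction. For the converse, I would invoke Proposition~\ref{AddDefInj} to write $\D=\M\mbox{-}\mathrm{Inj}$ for a set $\M$ of morphisms with finitely presented domain and codomain. Given $h:K_1\to K_2$ in $\M$, the map $-\circ h:\Modl\A(K_2,-)\to\Modl\A(K_1,-)$ becomes, after applying $Q$, an epimorphism in $\D^+$ (this uses that $h$ is an $\M$-morphism). If $ev_M=G\circ Q$ with $G$ exact, then $G$ preserves epimorphisms, so $ev_M(-\circ h)$ is surjective for every $h\in\M$; unwinding $ev_M(-\circ h):\Modl\A(K_2,M)\to\Modl\A(K_1,M)$ this says precisely that $M$ is injective with respect to each $h\in\M$, i.e.\ $M\in\M\mbox{-}\mathrm{Inj}=\D$. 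Combining both directions gives $\D\simeq\Ex[\D^+,\Ab]$, and the ``in particular'' statement follows by combining this equivalence with Proposition~\ref{AddExdefIsDef}.

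The main obstacle I anticipate is the verification that $Q(-\circ h)$ is genuinely an epimorphism in $\D^+$ for $h\in\M$: in the non-additive case this is the assertion that restricting the representable-comparison map to $\D$ yields a regular epimorphism, and it rests on the injectivity presentation of $\D$. In the additive setting one must check that surjectivity of each component $\Modl\A(K_2,M)\to\Modl\A(K_1,M)$ for $M\in\D$ is exactly the $\M$-injectivity condition, and that this pointwise surjectivity is what ``epimorphism in $\D^+$'' means (using that epimorphisms in the abelian category $\D^+$ are detected by evaluation at objects of $\D$). Everything else is a mechanical transcription in which ``regular epimorphism'' becomes ``surjection'' and exact functors automatically preserve the relevant short exact sequences.
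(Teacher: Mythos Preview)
Your approach is essentially the same as the paper's, which simply says the proof is analogous to that of Theorem~\ref{DefExDef} once one has the equivalences $\Modl\A\simeq[\A,\Ab]\simeq\Ex[\Modl\A^+,\Ab]$ from Proposition~\ref{AddExComplLex}. One correction: Proposition~\ref{AddExComplLex} identifies $\Modl\A^+$ as the free abelian completion of the preadditive category $\A$, not of $\fpmodl\A^{op}$, so the middle term of your chain should read $[\A,\Ab]$ rather than $[\fpmodl\A,\Ab]$ (the latter is not equivalent to $\Modl\A$); with that fix your argument goes through verbatim.
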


The proof is analogous to that of Theorem \ref{DefExDef} after one obtains equivalences $\Modl\A\simeq[\A,\Ab]\simeq\Ex[\Modl\A^+,\Ab]$ from the universal property of $\Modl\A^+$ as the free abelian completion of the preadditive category $\A$ (Proposition \ref{AddExComplLex}).

Now we prove the other part of the duality.
\begin{theorem}\cite[Proposition~11.2]{PreAMS}\label{AddMakDua}
For a small abelian category $\B$, we have $\B\simeq[\Ex[\B,\Ab],\Ab]^{\prod\to}$.
\end{theorem}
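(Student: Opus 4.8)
The statement to prove is the ``other half'' of the additive duality:
for a small abelian category $\B$, one has $\B\simeq[\Ex[\B,\Ab],\Ab]^{\prod\to}$, i.e.
$\B\simeq\Ex[\B,\Ab]^+$. By Proposition \ref{AddExdefIsDef}, $\D:=\Ex[\B,\Ab]$ is a
definable additive category, so the expression $\Ex[\B,\Ab]^+=\D^+$ makes sense and is a
small abelian category by the discussion following Proposition \ref{AddExdefIsDef}. The plan
is to imitate the non-additive argument of Theorem \ref{ExdefIsDef} and its abelian
packaging in Proposition \ref{AddExComplLex}: exhibit an explicit functor from $\B$ into
$\D^+$, namely evaluation, and show it is an equivalence by checking it is exact, fully
faithful, and essentially surjective.

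\textbf{The evaluation functor.} First I would define the candidate equivalence
$\mathrm{ev}:\B\to\D^+$ sending an object $B\in\B$ to the evaluation functor
$\mathrm{ev}_B:\D\to\Ab$, $X\mapsto X(B)$ (here $X\in\D=\Ex[\B,\Ab]$ is an exact functor on
$\B$). Since products and directed colimits in the functor category $[\B,\Ab]$ are computed
pointwise, each $\mathrm{ev}_B$ manifestly preserves products and directed colimits, so
$\mathrm{ev}_B\in\D^+$; and a morphism $B\to B'$ in $\B$ induces a natural transformation
$\mathrm{ev}_B\to\mathrm{ev}_{B'}$, so $\mathrm{ev}$ is a well-defined additive functor.
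Exactness of $\mathrm{ev}$ is again pointwise: a short exact sequence in $\B$ is sent by
every exact $X\in\D$ to a short exact sequence in $\Ab$, and short exactness in $\D^+$ is
tested objectwise, so $\mathrm{ev}$ preserves short exact sequences. Thus $\mathrm{ev}$ is an
exact functor between small abelian categories, and it remains to prove it is faithful,
full, and essentially surjective.

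\textbf{Full faithfulness and essential surjectivity.} For full faithfulness the key input
is that $\D=\Ex[\B,\Ab]$ contains enough exact functors to separate and co-separate the
objects and morphisms of $\B$; concretely I would invoke the embedding side of the duality,
i.e. that $\B$ may be recovered as the finitely presentable objects of a suitable functor
category, together with the Yoneda-type computation already used in
Proposition \ref{AddExComplLex}. The cleanest route is to combine the two halves already in
hand: Theorem \ref{AddDefExDef} gives $\D\simeq\Ex[\D^+,\Ab]$ for the definable category
$\D$, and Proposition \ref{AddExComplLex} identifies free abelian completions via the
$(-)^+$ construction; applying $\Ex[-,\Ab]$ to the sought equivalence and comparing with
Theorem \ref{AddDefExDef} forces $\mathrm{ev}$ to be an equivalence by a $2$-categorical
cancellation, much as Theorem \ref{EXDEFDual} is assembled from
Theorems \ref{ExdefIsDef} and \ref{DefExDef} in the non-additive case. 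For essential
surjectivity one shows every product-and-directed-colimit-preserving functor
$\D\to\Ab$ is of the form $\mathrm{ev}_B$; this is the exact analogue of the statement, used
inside the proof of Theorem \ref{DefExDef}, that every regular functor on $\K^+$ is an
evaluation, and it follows from the universal property of the free abelian completion
(Proposition \ref{AddExComplLex}) applied to the preadditive category underlying $\B$.

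\textbf{Main obstacle.} The step I expect to be most delicate is essential surjectivity,
i.e.\ showing that an arbitrary $F\in\D^+$ is represented by an object of $\B$ rather than by
some object lying outside $\B$ in an ambient module category. In the non-additive proof this
is precisely where Proposition \ref{key} (and its additive twin Proposition \ref{Addkey})
does the heavy lifting, producing a representing object that is a priori external and then
using the injectivity/definability conditions to pull it back inside. The analogous argument
here must use that $\B$ is abelian (not merely lex), so that every such $F$ admits a
finite presentation by representables $\Modl\B(B_i,-)|_\D$ and the comparison of kernels and
cokernels forces the representing object to lie in $\B$; verifying that this presentation is
stable under the passage from the external module category back to $\B$ is the crux, and is
where the self-duality of small abelian categories exploited by Prest is implicitly
replaced by the representation machinery of
Propositions \ref{Addkey} and \ref{AddExComplLex}.
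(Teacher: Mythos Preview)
Your outline sets up the evaluation functor $\mathrm{ev}:\B\to\D^+$ correctly and verifies its exactness, but the two substantive steps both have gaps.

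\textbf{The cancellation argument is circular.} You propose to deduce that $\mathrm{ev}$ is an equivalence from the fact that $-\circ\mathrm{ev}:\Ex[\D^+,\Ab]\to\Ex[\B,\Ab]$ is one (the latter following from Theorem~\ref{AddDefExDef}). But passing from ``$-\circ\mathrm{ev}$ is an equivalence'' to ``$\mathrm{ev}$ is an equivalence'' requires knowing that $\Ex[-,\Ab]:\ABEX^{op}\to\DEFA$ reflects equivalences, and that is exactly the content of the duality (Theorem~\ref{ABEXDEFADual}) whose missing half you are proving. The analogy with how Theorem~\ref{EXDEFDual} is assembled is misleading: there both halves were already in hand (the second half of Theorem~\ref{ExdefIsDef} is Makkai's result, cited rather than proved), whereas here the present theorem \emph{is} the second half.

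\textbf{Essential surjectivity is not established.} You correctly flag that Proposition~\ref{Addkey} only produces a cover of $F\in\D^+$ by a restricted representable $\Modl\B(K,-)|_\D$ with $K\in\fpmodl\B$, not with $K$ a representable $\B(B,-)$. The sentence ``the comparison of kernels and cokernels forces the representing object to lie in $\B$'' is the whole problem, not its solution. (There is in fact a clean observation you are missing: for $K=\mathrm{coker}(\B(B_2,-)\to\B(B_1,-))$ one computes $\Modl\B(K,X)\cong X(\ker(B_1\to B_2))$ for $X$ exact, so every such restricted representable is already an $\mathrm{ev}_B$. This reduces essential surjectivity to fullness, but you still have no independent argument for fullness.)

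\textbf{What the paper does instead.} The paper avoids proving fullness and essential surjectivity directly. It realizes both $\B$ and $\D^+$ as exact quotients of the free abelian category $Ab(\B)\simeq\Modl\B^+$: the canonical $Q':Ab(\B)\to\B$, and the $Q:\Modl\B^+\to\D^+$ of Corollary~\ref{AddQuo}. One checks $\mathrm{ev}\circ Q'\simeq Q$, so $\mathrm{ev}$ is an epimorphism in $\ABEX$. Theorem~\ref{AddDefExDef} then shows that the definable subcategories of $\Modl\B$ cut out by these two quotients coincide (both are $\D$). The proof concludes by invoking an external bijection between exact quotients of $Ab(\B)$ and definable subcategories of $\Modl\B$ (\cite[Theorem~8.1]{PreAMS}), forcing the two quotients---and hence $\B$ and $\D^+$---to agree. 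This last bijection is the genuine input that your outline lacks.
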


\begin{proof}
Each small abelian category is a quotient of a free abelian category \cite[2.5]{Ade}. Specifically, given a small abelian category $\B$, there is a quotient functor $Q':Ab(\B)\to\B$ where $Ab(\B)$ is the free abelian category on $\B$ considered as a preadditive category. From the definition of free abelian completion, we get $\Ex[Ab(\B),\Ab]\simeq[\B,\Ab]\simeq\Modl\B$. Moreover, Proposition \ref{AddExComplLex} states that $Ab(\B)\simeq\Modl\B^+$.

The category $\Ex[\B,\Ab]$ is a definable subcategory of $\Modl\B$ as shown in Proposition \ref{AddExdefIsDef}. Thus there is a quotient functor $Q:Ab(\B)(\simeq\Modl\B^+)\to\Ex[\B,\Ab]^+$ from Corollary \ref{AddQuo}. Consider the evaluation functor $ev:\B\to\Ex[\B,\Ab]^+$ defined by $ev_A:F\mapsto F(A)$. It is easy to see that $ev\circ Q'\simeq Q$. Thus $ev$ is an epimorphism.

Theorem \ref{AddDefExDef} states that the exactly definable subcategories of $\Modl\B$ defined by the quotients $\B$ and $\Ex[\B,\Ab]^+$ of $Ab(\B)$ are equivalent; such an equivalence is induced via precomposition with $ev$.

Recall that there is a bijection between abelian quotients of $Ab(\B)$ and definable subcategories of $\Modl\B$ from the equivalence (iii)$\leftrightarrow$(vi) of \cite[Theorem~8.1]{PreAMS}. Hence $ev$ is an isomorphism.
\end{proof}

We showed in Theorem \ref{AddMakDua} that for a small abelian category $\B$, there is an equivalence $\B\simeq\Ex[\B,\Ab]^+$, whereas in Theorem \ref{AddDefExDef} we showed that for any definable additive category $\D$, there is an equivalence $\D\simeq\Ex[\D^+,\Ab]$. Given an exact functor $b:\B_1\to\B_2$ between small abelian categories, there is an obvious interpretation functor $-\circ b:\Ex[\B_2,\Ab]\to\Ex[\B_1,\Ab]$. Conversely since products and directed colimits of short exact sequences are short exact in $\Ab$, the  functor $-\circ F:\D_2^+\to\D_1^+$ induced by an interpretation functor $F:\D_1\to\D_2$ between definable categories is an exact functor. We summarize our results in the form of a duality below.
\begin{theorem}\cite{PreRaj}\label{ABEXDEFADual}
There is a $2$-equivalence between $2$-categories
\begin{equation*}
\Ex[-,\Ab]:\ABEX^{op}\leftrightarrows\DEFA:[-,\Ab]^{\prod\to}.
\end{equation*}
\end{theorem}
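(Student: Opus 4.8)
The plan is to assemble the $2$-equivalence from the already-established object-level equivalences together with the functoriality of the two assignments. I would first verify that the two assignments genuinely define $2$-functors. For $\Ex[-,\Ab]:\ABEX^{op}\to\DEFA$, given an exact functor $b:\B_1\to\B_2$ between small abelian categories, precomposition $-\circ b:\Ex[\B_2,\Ab]\to\Ex[\B_1,\Ab]$ is well-defined on objects (the composite of an exact functor with $b$ is exact) and is an interpretation functor because products and directed colimits in $\Ex[\B_i,\Ab]\subseteq\Modl{\B_i}$ are computed pointwise. Dually, for $[-,\Ab]^{\prod\to}:\DEFA\to\ABEX^{op}$, an interpretation functor $F:\D_1\to\D_2$ yields $-\circ F:\D_2^+\to\D_1^+$, and this is exact precisely because, by the additive regular doctrine (products and directed colimits of short exact sequences are short exact in $\Ab$), kernels and cokernels in $\D_i^+$ are computed pointwise and are preserved under precomposition with $F$. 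On $2$-cells both assignments act by whiskering, and the compatibility with vertical and horizontal composition is routine.

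Next I would exhibit the unit and counit of the $2$-equivalence and show they are natural isomorphisms. The counit at a small abelian category $\B$ is the equivalence $\B\simeq[\Ex[\B,\Ab],\Ab]^{\prod\to}=\Ex[\B,\Ab]^+$ furnished by Theorem \ref{AddMakDua}; concretely it is the evaluation functor $ev:\B\to\Ex[\B,\Ab]^+$ shown there to be an equivalence (in fact an isomorphism after suitable normalization). The unit at a definable additive category $\D$ is the equivalence $\D\simeq\Ex[\D^+,\Ab]$ from Theorem \ref{AddDefExDef}, again realized by evaluation $\D\to\Ex[\D^+,\Ab]$, $L\mapsto ev_L$. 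Both families were already proved to be objectwise equivalences, so the remaining content is their naturality.

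The main obstacle will be precisely this naturality (pseudonaturality) of the unit and counit, i.e. checking that for every exact $b:\B_1\to\B_2$ the square relating $ev_{\B_1}, ev_{\B_2}$ and the two induced precomposition functors commutes up to a coherent natural isomorphism, and symmetrically for interpretation functors on the definable side. I would reduce this to a computation on representables: by Proposition \ref{Addkey} every object of $\D^+$ is covered by a restricted representable $\Modl\A(K,-)|_\D$, and by Corollary \ref{AddQuo} the quotient $Q:\Modl\A^+\to\D^+$ is an epimorphism in $\ABEX$, so two exact functors out of $\D^+$ agreeing on restricted representables agree everywhere. Tracking where evaluation functors send these generators, the naturality squares commute by a direct Yoneda-type calculation, with coherence of the chosen isomorphisms inherited from the universal property of the free abelian completion $\Modl\A^+$ (Proposition \ref{AddExComplLex}).

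Finally, having established that unit and counit are invertible $2$-natural (pseudonatural) transformations, the triangle identities hold up to invertible modification by the same generators-and-relations argument, and the two assignments therefore constitute a $2$-equivalence $\Ex[-,\Ab]:\ABEX^{op}\leftrightarrows\DEFA:[-,\Ab]^{\prod\to}$, as claimed. This mirrors exactly the passage from Theorems \ref{ExdefIsDef} and \ref{DefExDef} to the non-additive duality of Theorem \ref{EXDEFDual}, with abelian categories and exact functors replacing exact categories and regular functors throughout.
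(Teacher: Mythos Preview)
Your proposal is correct and follows essentially the same approach as the paper: the paper presents this theorem as a summary of the object-level equivalences from Theorems \ref{AddMakDua} and \ref{AddDefExDef} together with the functoriality observations (precomposition with an exact $b$ is an interpretation functor; precomposition with an interpretation $F$ is exact because products and directed colimits of short exact sequences are short exact in $\Ab$), exactly as you do. Your treatment is in fact more thorough than the paper's, which does not spell out the pseudonaturality verification or the triangle identities but simply states the duality as a packaging of the preceding results.
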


\subsection{(Anti)-equivalences between three $2$-categories}
\begin{definitions}
An object $A$ of an abelian category $\A$ with directed colimits is \emph{coherent} if it is finitely presentable and each of its finitely generated subobjects is finitely presentable.

An abelian category $\G$ with directed colimits is \emph{locally coherent} if it has a generating set consisting of coherent objects.

A coherent morphism $f:\G\to\G'$ between locally coherent categories is a pair of adjoint functors $f_*:\G\leftrightarrows\G':f^*$ whose left adjoint maps finitely presentable objects of $\G'$ to finitely presentable objects of $\G$.
\end{definitions}

Let $\COH$ denote the $2$-category of locally coherent categories whose $1$-morphisms are coherent morphisms and $2$-morphisms are natural transformations between them.

The full subcategory $\fp\G$ of a locally finitely presentable category $\G$ is a (skeletally) small abelian category. In the other direction, the free completion of a small abelian category $\A$ under directed colimits, denoted $\Ind\A$ is a locally coherent category.

\begin{figure}[h]
\centering
\begin{tikzpicture}
  \matrix (m) [matrix of math nodes,row sep=5em,column sep=4em,minimum width=8em]
  {\ABEX^{op} & \ & \DEFA \\ \ & \COH & \ \\};
  \path[-stealth]
    (m-1-1) edge[transform canvas={yshift=0.4ex}] node [above] {$\ABEX(-,\Ab)=\Ex[-,\Ab]$} (m-1-3)
    (m-1-3) edge[transform canvas={yshift=-0.4ex}] node [below] {$\DEFA(-,\Ab)=[-,\Ab]^{\prod\to}$} (m-1-1)
    	(m-1-3)	edge[transform canvas={yshift=0.4ex}] node [sloped, above]{$\Fun(-)$} (m-2-2)
    	(m-2-2) edge[transform canvas={yshift=-0.4ex}] node [sloped,below] {$\Abs(-)$} (m-1-3)
    	(m-1-1)	edge[transform canvas={yshift=0.4ex}] node [sloped, above] {$\Ind(-)$} (m-2-2)
    	(m-2-2) edge[transform canvas={yshift=-0.4ex}] node [sloped,below] {$\fp{(-)}$} (m-1-1);
\end{tikzpicture}
\caption{The additive case}\label{AddPic}
\end{figure}
One can associate to a definable subcategory $\D$ of a module category $\Modl\A$ a locally coherent category $\Fun(\D)$ which is a localisation of $[\fpmodl\A,\Ab]$. In the other direction, the full subcategory $\Abs(\G)$ of a locally coherent category $\G$ consisting of the absolutely pure objects (i.e., objects $M$ such that every monomorphism $M\to N$ in $\G$ is pure) is a definable category. See \cite{PreAMS} for the details.

The following theorem summarizes the functorial version of our discussion.
\begin{theorem}\cite[Theorem~1.1]{Pre32}
The triangle in Figure \eqref{AddPic} of $2$-equivalences between $2$-categories commutes.
\end{theorem}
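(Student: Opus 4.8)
The plan is to verify that each of the three edges of the triangle is a $2$-equivalence and then to identify the two triangular composites with the remaining edge. The top edge is precisely the $2$-equivalence $\Ex[-,\Ab]:\ABEX^{op}\leftrightarrows\DEFA:[-,\Ab]^{\prod\to}$ of Theorem \ref{ABEXDEFADual}, so the work concentrates on the two slanted edges and on commutativity.

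For the left edge I would argue in the style of Gabriel-Ulmer. As recorded before the statement, $\Ind(-)$ sends a small abelian category to a locally coherent one and $\fp{(-)}$ sends a locally coherent category to the small abelian category of its finitely presentable objects. The heart of the matter is the pair of natural equivalences $\fp(\Ind\A)\simeq\A$ and $\Ind(\fp\G)\simeq\G$: in a locally coherent category every object is a directed colimit of finitely presentable ones and these are coherent, so $\G$ is freely generated under directed colimits by $\fp\G$; conversely $\Ind\A$ is finitely accessible with $\A$ abelian, hence idempotent complete, giving $\fp(\Ind\A)\simeq\A$. On $1$-morphisms a coherent morphism $\G\to\G'$ restricts along $\fp{(-)}$ to an exact functor $\fp\G'\to\fp\G$ (its left adjoint preserves finitely presentable objects and restricts there to an exact functor), matching the direction in $\ABEX^{op}$, while every exact functor extends essentially uniquely to a coherent morphism of the $\Ind$-completions; $2$-morphisms correspond by restriction. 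This gives the $2$-equivalence $\ABEX^{op}\simeq\COH$.

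For the right edge, given a definable additive category $\D$ the category $\Fun(\D)$ is a locally coherent category, a localisation of $[\fpmodl\A,\Ab]$, and $\Abs(\G)$ recovers a definable category as the absolutely pure objects of $\G$; these are mutually inverse $2$-equivalences by Prest's theory, which I would invoke rather than reprove (see \cite{PreAMS}).

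The remaining and genuinely delicate step is commutativity, which I expect to be the main obstacle. Since the top edge is an equivalence with inverse $[-,\Ab]^{\prod\to}$, commutativity reduces to a single natural equivalence, for every definable additive category $\D$,
\begin{equation*}
\Fun(\D)\simeq\Ind(\D^+),\qquad\text{equivalently}\qquad\fp(\Fun(\D))\simeq\D^+,
\end{equation*}
where $\D^+=[\D,\Ab]^{\prod\to}$ is the small abelian category of Theorem \ref{AddMakDua}. The content of this identification is that the finitely presentable (equivalently coherent) objects of $\Fun(\D)$ are exactly the coherent functors on $\D$, which form the abelian category $\D^+$; since $\Fun(\D)$ is locally coherent it then coincides with $\Ind$ of its finitely presentable part, giving $\Fun(\D)\simeq\Ind(\D^+)$. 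Combined with Theorem \ref{AddMakDua}, which identifies $(\Ex[\B,\Ab])^+$ with $\B$, this yields $\Fun\circ\Ex[-,\Ab]\simeq\Ind$ on objects, and the reverse composites agree by passing to inverses. The heaviest bookkeeping lies in promoting these object-level equivalences to $2$-natural ones: one must check compatibility with interpretation functors $F:\D_1\to\D_2$, which induce $-\circ F$ on the $(-)^+$ side and the corresponding coherent morphism on the $\Fun$ side, and with the natural transformations between them. This compatibility follows from the naturality of the defining constructions, and once it is in place the triangle of Figure \eqref{AddPic} commutes.
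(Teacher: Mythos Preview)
The paper does not prove this theorem: it is stated with a citation to \cite[Theorem~1.1]{Pre32}, and the surrounding paragraph only records the definitions of $\Fun(-)$ and $\Abs(-)$ before deferring to \cite{PreAMS} for details. There is therefore no proof in the paper to compare your attempt against; you are supplying an argument where the authors chose simply to quote one.

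Your outline is reasonable and in fact anticipates the non-additive strategy the paper itself develops in Section~5. The identification $\fp(\Fun(\D))\simeq\D^+$ that you isolate as the crux of commutativity is exactly the additive counterpart of Theorem~\ref{funcind}, and your appeal to Theorem~\ref{AddMakDua} to close the loop is the right move. One step you pass over quickly deserves more care: for the left edge you assert that the left adjoint $f^*$ of a coherent morphism restricts to an \emph{exact} functor between the finitely presentable parts. The paper's definition of coherent morphism only requires $f^*$ to preserve finitely presentable objects; as a left adjoint it is right exact, but left exactness on $\fp\G'$ is not immediate from that definition alone and needs either an additional hypothesis or a separate argument. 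Apart from this, your sketch is a faithful reconstruction of the argument in the cited source and goes well beyond what the present paper provides.
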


Since the dual of a small abelian category is again so, there is an internal duality at each vertex of the triangle above. For instance, given a ring $R$, there is a bijection between definable subcategories of the categories of left and right $R$-modules. This internal duality is heavily exploited in the proofs of $2$-equivalences in the above diagram, and it was believed that such an internal duality in the non-additive case is necessary to obtain the non-additive analogue of the diagram. But despite the lack of internal dualities, we show its non-additive version in Figure \eqref{NonAddPic}.

\section{Functors on definable categories}
After discussing the duality between regular toposes and small exact categories, we associate one such regular topos, $\Fun_\K(\LL)$, to each definable subcategory $\LL$ of a locally finitely presentable category $\K$. We prove that $\Fun_\K(\LL)$ is independent of the category $\K$ and complete the picture of $2$-(anti-)equivalences between three $2$-categories.
\subsection{Regular toposes}
We define regular toposes and discuss their relationship with small exact categories. This is an unfortunate choice of terminology  since a topos is always a regular category. But a Grothendieck topos is regular iff it occurs as the classifying topos of a regular theory, and hence the name. 
\begin{definitions}\cite[Remark D3.3.10]{Ele}
An object $A$ of a topos $\E$ is called supercompact if every jointly epimorphic family $(f_i:B_i\to A)_{i\in I}$ contains at least one epimorphism $f_i:B_i\to A$.

An object $C$ of a topos $\E$ is called regular if it is supercompact and for any pullback $P\simeq B\times_C B'$ with $B$ and $B'$ supercompact, the object $P$ is supercompact as well.
\end{definitions}

\begin{definitions}\cite[Def.~C2.1.1,\,Theorem~D3.3.1]{Ele}
A coverage $\J$ on a small category $\C$ is regular if it is generated by singleton covering families.

A Grothendieck topos $\E$ is a regular topos if there exists a small lex category $\C$ and a regular coverage $\J$ on $\C$ such that $\E\simeq\Sh\C\J$.

A geometric morphism $e_*:\E_1\leftrightarrows\E_2:e^*$ between two regular toposes is regular if the inverse image functor $e^*$ preserves regular objects.
\end{definitions}
We denote by $\REG$ the $2$-category of regular toposes whose $1$-morphisms are regular geometric morphisms and $2$-morphisms are natural transformations.

\begin{theorem}\cite[Theorem~3.4(b)]{HuFlat}\label{ExComplPSh}
The free exact completion of a small lex category $\C$ can be realised as the full subcategory of the presheaf category $\PSh\C$ of functors $F$ such that for some $C\in\C$ there is a regular epimorphism $\eta:\C(-,C)\to F$ with the following property: the domain $G$ of the kernel pair of $\eta$ admits a regular epimorphism $\C(-,C')\to G$ for some $C'\in\C$.
\end{theorem}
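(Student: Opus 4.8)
The plan is to exhibit the full subcategory described in the statement directly as an exact category receiving a lex embedding from $\C$, and then to check the defining universal property of the free exact completion; a slicker route via Carboni's intrinsic characterisation is indicated at the end. Write $\E\subseteq\PSh\C$ for the full subcategory in question. First I would record the ambient facts. The Yoneda embedding $y\colon\C\to\PSh\C$, $C\mapsto\C(-,C)$, is fully faithful and, since $\C$ is lex, preserves finite limits; in particular $\C(-,C_1)\times\C(-,C_2)\cong\C(-,C_1\times C_2)$ and $\C(-,1)$ is terminal. Each representable is regular projective in $\PSh\C$, since $\PSh\C(\C(-,C),-)\cong\mathrm{ev}_C$ preserves epimorphisms (epimorphisms of presheaves being pointwise). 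Finally $\PSh\C$ is a presheaf topos, hence Barr-exact: regular epimorphisms are pullback-stable and every equivalence relation is effective. Every representable lies in $\E$ (take $\eta=\mathrm{id}$, whose kernel pair is the diagonal $\C(-,C)$), so $y$ factors through $\E$.

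Next I would show that $\E$ is closed in $\PSh\C$ under finite limits and under coequalizers of its own kernel pairs, making $\E$ exact and $y\colon\C\to\E$ lex and regular. For finite limits, given presentations $\eta_i\colon\C(-,C_i)\to F_i$ I would lift along the projective representables to cover $F_1\times F_2$ (and, similarly, pullbacks) by the representable $\C(-,C_1\times C_2)$, using pullback-stability of regular epis in the topos, and then check that the kernel pair of the resulting cover is again covered by a representable; here the hypothesis that each $F_i$ has a representably covered kernel pair is exactly what is needed, and this is also where one must verify that a \emph{single} quotient step still suffices. For quotients, if $R\rightrightarrows F$ is an equivalence relation in $\E$ with coequalizer $F\to F/R$ in $\PSh\C$, then composing a cover $\C(-,C)\to F$ with $F\to F/R$ covers $F/R$, and the kernel pair of this composite is computed from $R$ and from the kernel pair of $\C(-,C)\to F$, both representably covered, so $F/R$ stays in $\E$. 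Effectiveness of equivalence relations in $\E$ is inherited from the ambient topos.

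With $\E$ exact and $y$ a lex embedding, it remains to prove that for every exact $\B$ the functor $-\circ y\colon\Reg(\E,\B)\to\Lex(\C,\B)$ is an equivalence. A regular functor restricts along the lex $y$ to a lex functor, so $-\circ y$ is well defined. For essential surjectivity, given lex $L\colon\C\to\B$ I would extend it by sending $F$ with canonical presentation $\C(-,C')\to G\rightrightarrows\C(-,C)\to F$ to the coequalizer in $\B$ of the equivalence relation $LC'\rightrightarrows LC$ gotten by applying $L$; this coequalizer exists and is effective because $\B$ is exact and $L$ is lex. On morphisms I would lift along the projective representables and pass to the quotient. Fullness and faithfulness then follow as in the proof of Corollary \ref{Quo}: the representables form a regular-projective generating family in $\E$, every object is a coequalizer of a kernel pair of a map between representables, and a regular functor is determined up to isomorphism by its values on the representables together with preservation of these presentations.

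The main obstacle is the bookkeeping in the universal property, namely verifying that the extension of $L$ is independent of the chosen presentations and is genuinely functorial on morphisms; this is the familiar subtlety in exact-completion arguments, where one lifts maps between quotients along projectives and then identifies the lifts using the equivalence relation. To bypass it I would instead invoke Carboni's intrinsic characterisation: an exact category whose regular projectives generate and are closed under finite limits is the exact completion of its full subcategory of regular projectives. Having shown that $\E$ is exact, that the representables are regular projective and generate (immediate from the defining covering condition), and that they are closed under finite limits (since $\C$ is lex and $y$ preserves finite limits), this identifies $\E$ with $\C_{ex/lex}$ directly, modulo passing to the idempotent completion of $\C$, which does not change the exact completion.
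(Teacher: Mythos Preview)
The paper does not give its own proof of this statement: it is quoted verbatim as \cite[Theorem~3.4(b)]{HuFlat} and used as an input to the discussion that follows (in particular to obtain Corollary~\ref{ExComplBoth}). So there is no in-paper argument to compare your proposal against.

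That said, your outline is the standard one and is essentially how Hu and Carboni prove such results. A couple of remarks on the execution. First, in the Carboni route you correctly note the idempotent-completion caveat, but you should also observe that what Carboni's theorem identifies is the full subcategory of \emph{all} regular projectives of $\E$, and these are exactly the retracts of representables, not just the representables; so strictly speaking you recover $(\widehat\C)_{ex/lex}$ where $\widehat\C$ is the Cauchy completion, and then use $(\widehat\C)_{ex/lex}\simeq\C_{ex/lex}$. Second, the step you flag as ``a single quotient step still suffices'' for closure under finite limits is genuinely the place where the work lies: the kernel pair of the product cover $\C(-,C_1\times C_2)\twoheadrightarrow F_1\times F_2$ is the product of the two kernel pairs, and one must argue that a product of objects each covered by a representable is again covered by a representable (this uses that regular epimorphisms in a topos are closed under products, plus $\C$ lex). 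None of this is a gap, but it is where a careful proof spends its effort.
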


It can be easily verified that the regular objects in $\PSh\C$ are precisely those presheaves on $\C$ which admit a regular epimorphism from a representable with the property in the above theorem (see \cite[D3.3.4]{Ele} for a proof). Hence the exact completion of $\C$ can be realised as the full subcategory $\reg{\PSh\C}$ of regular objects in $\PSh\C$.

We combine the results in Theorems \ref{ExComplPSh} and \ref{ExComplLex} in the following corollary.
\begin{cor}\label{ExComplBoth}
For any small lex category $\C$, there is an equivalence of categories $(\Lex(\C,\Set),\Set)^{\prod\to}\simeq\reg{\PSh\C}$, both equivalent to the free exact completion of $\C$.
\end{cor}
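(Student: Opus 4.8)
The plan is to prove this by transitivity, since the corollary asserts nothing beyond identifying two concrete models of a single abstract object---the free exact completion $\C_{ex/lex}$. First I would invoke Corollary \ref{ExComplLex}, which already supplies the equivalence $\C_{ex/lex}\simeq(\Lex(\C,\Set),\Set)^{\prod\to}$; this identifies the left-hand side of the claimed equivalence with the exact completion.

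Next I would turn to Theorem \ref{ExComplPSh}, which realises $\C_{ex/lex}$ as the full subcategory of $\PSh\C$ cut out by the stated two-step regular-epimorphism condition. The remark immediately following that theorem observes that this full subcategory is precisely $\reg{\PSh\C}$, the subcategory of regular objects: an object of $\PSh\C$ satisfies the condition of Theorem \ref{ExComplPSh} if and only if it is regular in the sense of the relevant definition (with reference to \cite[D3.3.4]{Ele}). Hence $\C_{ex/lex}\simeq\reg{\PSh\C}$, which identifies the right-hand side with the exact completion as well. Composing the two equivalences yields $(\Lex(\C,\Set),\Set)^{\prod\to}\simeq\C_{ex/lex}\simeq\reg{\PSh\C}$, which is the assertion.

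The only point needing a word of justification is that both constructions produce the \emph{same} exact completion; but this is automatic, since the free exact completion is determined up to equivalence by its universal property, so any two categories each equivalent to it are equivalent to one another. I do not expect any genuine obstacle here: all the substantive work has been carried out in Theorem \ref{ExComplPSh} and Corollary \ref{ExComplLex}, and the corollary is a purely formal consequence obtained by chaining these equivalences through the common vertex $\C_{ex/lex}$.
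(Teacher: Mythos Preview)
Your proposal is correct and matches the paper's approach exactly: the paper simply states that the corollary combines Theorem~\ref{ExComplPSh} (together with the remark following it identifying the described subcategory with $\reg{\PSh\C}$) and Corollary~\ref{ExComplLex}, which is precisely the transitivity argument through $\C_{ex/lex}$ that you outline.
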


Since regular epimorphisms in a regular category $\B$ are pullback stable, the canonical regular coverage $\reg\T_\B$ generated by singleton families consisting of regular epimorphisms is a regular coverage. Hence $\Sh\B{\reg\T_\B}$ is a regular topos. The canonical regular coverage on an exact category is subcanonincal, i.e., the representables are $\reg\T_B$-sheaves. On the other hand, the full subcategory of regular objects in a regular topos is a small exact category. In fact, the representables $\B(-,B)$  are precisely the regular objects. Below we summarize this discussion.
\begin{theorem}\cite[D3.3.10]{Ele}\label{EXREGDual}
There is a $2$-equivalence between $2$-categories
\begin{equation*}
\Sh{-}{\reg\T}:\EX^{op}\leftrightarrows:\REG:\reg{(-)}.
\end{equation*}
\end{theorem}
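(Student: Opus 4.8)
The plan is to exhibit the two assignments $\Sh{-}{\reg\T}$ and $\reg{(-)}$ as mutually quasi-inverse $2$-functors. The object-level content — that $\Sh\B{\reg\T_\B}$ is a regular topos for exact $\B$, that its canonical coverage is subcanonical, and that $\reg\E$ is a small exact category for a regular topos $\E$ — is already assembled in the discussion preceding the theorem, so what remains is to verify the two triangle equivalences together with the correspondence on $1$- and $2$-cells.

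For the composite $\reg{(\Sh\B{\reg\T_\B})}$, let $\B$ be a small exact category. Because $\reg\T_\B$ is subcanonical, the Yoneda embedding $y\colon\B\to\Sh\B{\reg\T_\B}$ is full and faithful, and by the fact recorded just before the statement its essential image is exactly the subcategory of regular objects. Hence $y$ corestricts to an equivalence $\B\simeq\reg{(\Sh\B{\reg\T_\B})}$. One checks that $y$ is lex and, since a singleton covering family for $\reg\T_\B$ consists of a regular epimorphism, that $y$ carries coequalizers of kernel pairs to epimorphisms in the topos; thus this is an equivalence of exact categories, natural in $\B$.

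For the reverse composite I would use the Comparison Lemma \cite{Ele}. Given a regular topos $\E$, I first show that the regular objects form a dense generating subcategory: the (sheafified) representables coming from a lex site for $\E$ are regular and every object of $\E$ is covered by them. I then identify the coverage that $\E$ induces on $\reg\E$ with the canonical regular coverage $\reg\T$, using that a singleton family in $\reg\E$ is covering in $\E$ precisely when it consists of a regular epimorphism. The Comparison Lemma then yields $\E\simeq\Sh{\reg\E}{\reg\T}$, naturally in $\E$. This density-and-coverage step is the main obstacle: the delicate point is to match the intrinsic regular coverage on $\reg\E$ with the topology restricted from $\E$, and to verify that no covering information is lost when passing from a full generating site to the subcategory of regular objects.

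Finally, for $2$-functoriality, a regular functor $b\colon\B_1\to\B_2$ is a morphism of regular sites and so induces a geometric morphism $\Sh{\B_2}{\reg\T}\to\Sh{\B_1}{\reg\T}$ (reversing direction, as demanded by the $op$); its inverse image restricts to $b$ on representables and hence preserves regular objects, so it is a regular geometric morphism. Conversely, if $e\colon\E_1\to\E_2$ is regular then $e^*$ preserves regular objects and, being lex and preserving epimorphisms, restricts to a regular functor $\reg{\E_2}\to\reg{\E_1}$. These assignments are mutually inverse on $1$-cells, and natural transformations between regular functors correspond bijectively to natural transformations between the associated inverse-image functors, giving the correspondence on $2$-cells. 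Together with the two triangle equivalences this establishes the claimed $2$-equivalence.
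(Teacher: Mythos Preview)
The paper does not give its own proof of this theorem: it is stated as a citation to \cite[D3.3.10]{Ele}, with the surrounding paragraphs supplying only the object-level identifications (the paragraph before records that the regular objects of $\Sh\B{\reg\T_\B}$ are precisely the representables, hence $\B\simeq\reg{(\Sh\B{\reg\T_\B})}$) and the action on $1$-morphisms (the paragraph after). Your proposal is consistent with, and a fleshing-out of, exactly this surrounding discussion; the one ingredient you add that the paper leaves entirely to the citation is the reverse composite $\E\simeq\Sh{\reg\E}{\reg\T}$ via the Comparison Lemma, which is the standard argument and is in fact the tool the paper itself deploys later in the proof of Theorem~\ref{funcind}.
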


The action of these $2$-functors on $1$-morphisms is as follows. Precomposition with a regular functor $b:\B_1\to\B_2$ between exact categories is automatically a morphism of sites $b:(\B_1,\reg\T_{\B_1})\to(\B_2,\reg\T_{\B_2})$ \cite[Ex.~C2.3.2(b)]{Ele}. This induces a geometric morphism $\Sh{\B_2}{\reg\T_{\B_2}}\to\Sh{\B_1}{\reg\T_{\B_1}}$ between corresponding regular toposes whose inverse image functor takes representables to representables, which in turn implies that it is a regular geometric morphism. In the other direction, the restriction of the inverse image functor of a regular geometric morphism between regular toposes is clearly a regular functor.

\begin{rmk}\cite[Remark~D3.3.12]{Ele}
The regular topos $\Sh\B{\reg\T}$ is closed under directed colimits in $\PSh\B$. Thus its regular objects, i.e., the representables, are finitely presentable as objects of $\Sh\B{\reg\T}$. The converse is not true in general since the full subcategory of finitely presentable objects is closed under all coequalizers while the full subcategory of regular objects is only closed under some coequalizers.
\end{rmk}

\subsection{Functor category is independent of the embedding}
We work with a definable subcategory $\LL$ of a locally finitely presentable category $\K$. Denote by $\Fun(\K)$ the collection of all functors $\K\to\Set$ which preserve directed colimits, and by $\fun(\K)$ the full subcategory $\reg{\Fun(\K)}$ of regular objects of $\Fun(\K)$. Clearly $\Fun(\K)\simeq(\fp\K,\Set)$. From Corollary \ref{ExComplBoth}, $\fun(\K)$ is precisely the exact completion of $\fp\K^{op}$ and thus equivalent to the category of all functors $\K\to\Set$ which preserve directed colimits and products. 

Let $\M$ denote the set of morphisms $h$ in $\fp\K$ (assuming $\K$ is skeletal) such that each $L\in\LL$ is injective with respect to $h$. Recall that $\M$ is closed under identities, composition, left cancellation and pushouts along morphisms in $\fp\K$. In other words, $\M$ is a pushout-stable subcategory of $\fp\K$.

To each object $K\in\fp\K$ we associate the collection $\J_\M(K)$ of singleton families consisting of morphisms in $\M$ with domain $K$. Owing to the closure properties of $\M$, it is easy to see that the function $K\mapsto \J_M(K)$ is a regular coverage on $\fp\K^{op}$. We define $\Fun_\K(\LL)$ to be the localization $\Sh{\fp\K^{op}}{\J}$ of $\Fun(\K)$. Since $\fp\K^{op}$ is a small lex category and $\J$ is a regular coverage, the topos $\Sh{\fp\K^{op}}\J$ is a regular topos. Therefore the full subcategory $\reg{\Sh{\fp\K^{op}}\J}$ of regular objects in the sheaf topos $\Sh{\fp\K^{op}}\J$ is an exact category. We denote this category by $\fun_\K(\LL)$.

Unraveling the definition one sees that a functor $F:\fp\K\to\Set$ is a $\J$-sheaf if and only if $F(h)$ is a bijection for each morphism $h:A\to B$ in $\M$.

The equivalences $\K\simeq\Lex(\fp\K^{op},\Set)$ and $(\fp\K^{op})_{ex/lex}\simeq\reg{(\fp\K,\Set)}$ (Corollary \ref{ExComplBoth}) yield $\K\simeq \Lex(\fp\K^{op},\Set)\simeq\Reg((\fp\K^{op})_{ex/lex},\Set)\simeq\Reg(\fun(\K),\Set)$.

\begin{theorem}\label{funcind}
If $\LL$ is a definable subcategory of a locally finitely presentable category $\K$, then $\fun_\K(\LL)\simeq(\LL,\Set)^{\prod\to}$. Hence the functor category $\fun_\K(\LL)$ is independent of the embedding of $\LL$ into $\K$.
\end{theorem}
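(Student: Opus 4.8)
The plan is to show that both categories $\fun_\K(\LL)$ and $(\LL,\Set)^{\prod\to}=\LL^+$ arise as the same exact quotient of $\fun(\K)$, and then invoke Theorem~\ref{DefExDef} to conclude independence of the embedding. Recall from the excerpt that $\fun(\K)\simeq(\fp\K,\Set)^{\prod\to}\simeq\K^+$ is the free exact completion of $\fp\K^{op}$, and that $\K\simeq\Reg(\fun(\K),\Set)$. The target category $\fun_\K(\LL)$ is by construction $\reg{\Sh{\fp\K^{op}}{\J}}$, the exact category of regular objects in the sheaf subtopos cut out by the regular coverage $\J=\J_\M$ associated to the pushout-stable subcategory $\M\subseteq\fp\K$ with $\LL=\M\mbox{-}\mathrm{Inj}$.

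First I would identify the localization functor. The inclusion $\Sh{\fp\K^{op}}{\J}\hookrightarrow\PSh{\fp\K^{op}}$ has a left-exact left adjoint (sheafification), and restricting to regular objects yields a regular functor $a:\fun(\K)\to\fun_\K(\LL)$. Using the characterization noted just before the theorem---that $F:\fp\K\to\Set$ is a $\J$-sheaf iff $F(h)$ is a bijection for every $h\in\M$---I would check that this sheafification forces exactly the identifications corresponding to the morphisms $-\circ h:\K(B,-)\to\K(A,-)$ for $h:A\to B$ in $\M$ becoming regular epimorphisms (equivalently, the $h$-injective objects becoming the sheaves). Concretely, the regular epimorphism $Q(-\circ h)$ from the proof of Theorem~\ref{DefExDef} is precisely the image under the quotient $\K^+\to\LL^+$ of the map induced by $h$, and the coverage $\J$ is built to invert exactly these. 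The aim of this step is a regular functor $\fun(\K)\to\fun_\K(\LL)$ which is an epimorphism in $\EX$ and which kills exactly the same data as the epimorphism $Q:\K^+\to\LL^+$ of Corollary~\ref{Quo}.

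Next I would match the two quotients. Both $Q:\K^+\to\LL^+$ and $a:\fun(\K)\to\fun_\K(\LL)$ are epimorphisms in $\EX$ out of $\fun(\K)\simeq\K^+$, so by the universal property it suffices to show they have the same kernel congruence, i.e.\ that $Q$ inverts a morphism of $\K^+$ iff its image is inverted by $a$, or equivalently that an object of $\fun(\K)$ descends along one iff it descends along the other. This reduces to the statement that the regular objects of $\Sh{\fp\K^{op}}{\J}$ correspond bijectively, under the equivalence $\K^+\simeq\fun(\K)$, to the objects of $\LL^+$---which in turn follows from the fact that both are generated under coequalizers of kernel pairs by the restricted representables $\K(K,-)|_\LL$ (as in the first paragraph of the proof of Corollary~\ref{Quo}) subject to precisely the relations imposed by $\M$. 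An epimorphism in $\EX$ is determined up to equivalence by the objects it inverts, so matching the inverted representables gives $\fun_\K(\LL)\simeq\LL^+$.

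The main obstacle I expect is the comparison of the two localizations at the level of their regular objects rather than their ambient toposes: one must verify that sheafification for the coverage $\J$ agrees, on regular objects, with the exact quotient $Q$ produced by Proposition~\ref{key}, and in particular that the kernel-pair condition defining regular objects in $\Sh{\fp\K^{op}}{\J}$ (Theorem~\ref{ExComplPSh}) is equivalent to membership in the image of $\LL^+$. This is where the hypothesis that $\LL$ is genuinely definable---closed under pure subobjects, so that $\M$ captures all the injectivity data---is essential; without it the coverage would invert too few maps. Once this equivalence of quotients is established, independence of the embedding is immediate, since $\LL^+=(\LL,\Set)^{\prod\to}$ is manifestly intrinsic to $\LL$, so $\fun_\K(\LL)$ cannot depend on $\K$.
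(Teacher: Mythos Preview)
Your approach is genuinely different from the paper's. The paper does not compare two exact quotients of $\K^+$; instead it compares two \emph{sites} for the same topos. Concretely, it observes (via Proposition~\ref{key}) that the restricted representables $\K(K,-)|_\LL$ form a $\reg\T$-dense full subcategory $\C$ of $\LL^+$, applies the Comparison Lemma to get $\Sh{\LL^+}{\reg\T}\simeq\Sh{\C}{\reg\T_\C}$, and then identifies $(\C,\reg\T_\C)$ with $(\fp\K^{op},\J)$ via the restricted Yoneda functor. Passing to regular objects and invoking Theorem~\ref{EXREGDual} finishes. This is clean because the Comparison Lemma does all the work of matching the two descriptions.

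Your route has a real gap. The assertion that ``an epimorphism in $\EX$ is determined up to equivalence by the objects it inverts'' is not a standard fact and you do not prove it; the notion of ``kernel congruence'' for a regular functor between exact categories is left informal, and there is no general first-isomorphism theorem for exact categories available in the paper to invoke. The right way to salvage your strategy is to use the duality already established in Theorem~\ref{EXDEFDual}: an epimorphism $\K^+\to\B$ in $\EX$ corresponds, under $\Reg(-,\Set)$, to a full embedding $\Reg(\B,\Set)\hookrightarrow\Reg(\K^+,\Set)\simeq\K$, so two such epimorphisms agree iff they cut out the same definable subcategory of $\K$. For $Q$ this subcategory is $\LL$ by Theorem~\ref{DefExDef}; for your sheafification $a$, the corresponding subcategory is $\Pts(\Sh{\fp\K^{op}}{\J})$, i.e.\ the $\J$-continuous lex functors $\fp\K^{op}\to\Set$, which are exactly the $\M$-injective objects of $\K$, namely $\LL$. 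That argument works, but it is what you must actually write down; the ``same kernel'' heuristic does not suffice. You should also justify that sheafification for a regular coverage restricts to regular objects (equivalently, that the associated geometric morphism is regular), which you currently assume.
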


\begin{proof}
Suppose $\reg\T$ denotes canonical regular coverage on $\LL^+$ consisting of only regular epimorphisms. We claim that there is an equivalence of categories between $\Sh{\LL^+}{\reg\T}$ and $\Sh{\fp\K^{op}}\J$.

Proposition \ref{key} states that every object of $\LL^+$ is a regular epimorphic image of a restricted representable $\K(K,-)|_\LL$ for $K\in\fp\K$. This is equivalent to the statement that the full subcategory $\C$ of $\LL^+$ consisting of the restricted representables is $\reg\T$-dense \cite[Definition~C2.2.1]{Ele}. Let $\reg\T_\C$ denote the restriction of the coverage $\reg\T$ to $\C$. Then by Comparison lemma \cite[Theorem~2.2.3]{Ele}, $\Sh{\LL^+}{\reg\T}\simeq\Sh{\C}{\reg\T_\C}$. So it is enough to exhibit an equivalence of categories between $\Sh{\C}{\reg\T_\C}$ and $\Sh{\fp\K^{op}}\J$.

The corestriction of the Yoneda embedding gives an equivalence (in fact, an isomorphism) of categories $y':\fp\K^{op}\to\C$. It is routine to verify that a presheaf on $\fp\K^{op}$ is a $\J$-sheaf iff the corresponding presheaf on $\C$ induced by $y'$ is a $\reg\T_\C$-sheaf. This completes the proof of the claim.

Thus we have equivalences $\Fun_\K(\LL):=\Sh{\fp\K^{op}}\J\simeq\Sh{\LL^+}{\reg\T}$. Restricting to the full subcategories of regular objects, we have $\fun_\K(\LL)\simeq\reg{\Sh{\LL^+}{\reg\T}}\simeq\LL^+$ where the final equivalence is by Theorem \ref{EXREGDual}.
\end{proof}

In view of Theorem \ref{funcind}, we can remove the subscript $\K$ from the functor category notations. It is useful to explicitly state the equivalence between $\LL^+$ and $\reg{\Sh{\fp\K^{op}}\J}$. Each presheaf $F$ on $\fp\K^{op}$ has a unique extension to a covariant functor $\overrightarrow{F}:\K\to\Set$ that preserves directed colimits. For a $\J$-sheaf $F$, the restriction $\overrightarrow{F}|_\LL$ is the corresponding object of $\LL^+$. Conversely given an object $X\in\LL^+$, the representable functor $\LL^+(-,X)$ is a regular $\reg\T$-sheaf on $\LL^+$. Define $G_X:\fp\K\to\Set$ by $G_X(K):=\LL^+(\K(K,-)|_\LL,X)$.

\subsection{Points of regular toposes}
Recall that a point $e$ of a topos $\E$ is a geometric morphism $e_*:\Set\leftrightarrows\E: e^*$. Let $\Pts(\E)$ denote the category of points of $\E$.

Given a small lex category $\C$ with a coverage $\J$, we say that a functor $\C\to\Set$ is $\J$-continuous if it sends each $\J$-covering family to a jointly epimorphic family.

\begin{theorem}\cite[Corollary~VII.6.4]{MM}
Let $(\C,\J)$ be a site with $\C$ a small lex category. Then the category $\Pts(\E)$ is equivalent to the category of $\J$-continuous lex functors $\C\to\Set$.
\end{theorem}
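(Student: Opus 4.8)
The plan is to obtain the statement by specializing the general description of geometric morphisms into a Grothendieck topos (Diaconescu's theorem) to the case of a point, and then to use that the domain category $\C$ is lex in order to replace ``flat'' by ``left exact''. Write $\E=\Sh{\C}{\J}$ and let $i:\E\hookrightarrow\PSh{\C}$ be the inclusion with its left-exact left adjoint (sheafification) $a$. A point is a geometric morphism $e:\Set\to\E$, i.e.\ an adjunction $e_*:\Set\leftrightarrows\E:e^*$ with $e^*$ preserving finite limits, and morphisms of points are the natural transformations between inverse-image functors. Since postcomposing $a\dashv i$ with $e$ turns a point of $\E$ into a point of $\PSh{\C}$, I would first treat the presheaf case and then carve out the sheaf condition.

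First I would recall Diaconescu's theorem for the presheaf topos: restriction along the Yoneda embedding $y:\C\to\PSh{\C}$, sending a point $p$ to $F:=p^*\circ y$, induces an equivalence between $\Pts(\PSh{\C})$ and the category of flat functors $\C\to\Set$. The inverse sends a flat $F$ to the essentially unique colimit-preserving functor $\PSh{\C}\to\Set$ extending $F$ along Yoneda (the tensor $-\otimes_\C F$, equivalently the left Kan extension $\operatorname{Lan}_y F$); this functor automatically has a right adjoint, being a cocontinuous functor out of a Grothendieck topos, and it is left exact precisely because $F$ is flat. Naturality in $F$ makes this an equivalence of categories, not merely a bijection on objects.

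The crux is the following lemma: \emph{when $\C$ has finite limits, a functor $F:\C\to\Set$ is flat if and only if it is left exact.} I would prove it through the category of elements $\int F$ (objects $(C,x)$ with $x\in FC$; a morphism $(C,x)\to(C',x')$ being $u:C\to C'$ with $(Fu)(x)=x'$), using the standard fact that $F$ is flat iff $\int F$ is cofiltered. If $F$ is left exact, then $F1=1$ gives an object, so $\int F$ is nonempty; given $(C,x),(C',x')$ the element $(x,x')\in F(C\times C')=FC\times FC'$ furnishes a cone through the two product projections; and given a parallel pair $u,v:(C,x)\rightrightarrows(C',x')$ the equalizer $e:E\to C$ in $\C$, together with the unique $y\in FE=\operatorname{eq}(Fu,Fv)$ satisfying $(Fe)(y)=x$, furnishes an equalizing map $e:(E,y)\to(C,x)$; hence $\int F$ is cofiltered and $F$ is flat. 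Conversely, cofilteredness of $\int F$ forces $F$ to preserve the terminal object (maps into $1$ are unique), binary products (the cone condition gives surjectivity of $F(C\times C')\to FC\times FC'$ and the equalizing condition gives injectivity) and equalizers (again the two conditions), so $F$ is left exact. This translation between the filtering conditions and limit-preservation, which genuinely exploits the presence of finite limits in $\C$ to build the mediating cones, is the main technical point.

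Finally I would impose the sheaf condition. A point $p$ of $\PSh{\C}$ factors, uniquely up to isomorphism, through $i:\E\hookrightarrow\PSh{\C}$ exactly when $p^*$ inverts the canonical monomorphisms $S\rightarrowtail y(C)$ attached to the $\J$-covering sieves $S$ of $C$; I would check that, after transporting along Diaconescu, this condition is equivalent to $F=p^*\circ y$ being $\J$-continuous, i.e.\ to $F$ sending each $\J$-covering family to a jointly epimorphic family in $\Set$. Combining this with the previous two steps, $\Pts(\E)$ is equivalent to the category of $\J$-continuous flat functors $\C\to\Set$, which by the lemma coincides with the category of $\J$-continuous lex functors $\C\to\Set$. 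The equivalence is the one induced by $e\mapsto e^*\circ y$, and it is an equivalence of categories because morphisms of points correspond, under restriction along $y$, to natural transformations between the associated functors.
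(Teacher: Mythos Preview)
Your argument is correct and is essentially the standard one: Diaconescu's theorem identifies points of $\PSh{\C}$ with flat functors $\C\to\Set$; for a lex $\C$ the flat functors are exactly the lex ones (via cofilteredness of the category of elements); and the factorization through the sheaf subtopos singles out exactly the $\J$-continuous ones. There is nothing to compare against, however: the paper does not prove this statement at all but merely cites it as \cite[Corollary~VII.6.4]{MM}, and your outline is precisely the route taken in MacLane--Moerdijk to reach that corollary.
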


For a small exact category $\B$ with the canonical regular coverage $\reg\T$, the $\reg\T$-continuous lex functors to $\Set$ are precisely the regular functors on $\B$. Hence the above statement says that $\Pts(\Sh\B{\reg\T})$ is equivalent to $\Reg(\B,\Set)$ which is a definable category by Theorem \ref{ExdefIsDef}. 

A regular geometric morphism $e:\Sh\B{\reg\T}\to\Sh{\B'}{\reg\T}$ between regular toposes is the same thing as a regular functor $e^*:\B'\to\B$ in $\EX$ (Theorem \ref{EXREGDual}), which induces an interpretation functor $-\circ e^*:\Reg(\B,\Set)\to\Reg(\B',\Set)$. In other words, there is a covariant $2$-functor $\Pts(-):\REG\to\DEF$.
\begin{theorem}
There is a $2$-equivalence between $2$-categories
\begin{equation*}
\Pts(-):\REG\leftrightarrows:\DEF:\Fun(-).
\end{equation*}
\end{theorem}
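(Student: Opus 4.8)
The plan is to realise both $2$-functors as composites of the two anti-equivalences already in hand, namely the $\EX$--$\DEF$ duality $\Reg(-,\Set):\EX^{op}\leftrightarrows\DEF:(-,\Set)^{\prod\to}$ of Theorem~\ref{EXDEFDual} and the $\EX$--$\REG$ duality $\Sh{-}{\reg\T}:\EX^{op}\leftrightarrows\REG:\reg{(-)}$ of Theorem~\ref{EXREGDual}. Granting these, the composite $\Reg(-,\Set)\circ\reg{(-)}:\REG\to\DEF$ and the composite $\Sh{-}{\reg\T}\circ(-,\Set)^{\prod\to}:\DEF\to\REG$ are automatically mutually quasi-inverse $2$-equivalences, because each of the four constituents is half of a $2$-equivalence and the intermediate $\EX^{op}$ cancels. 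The substance of the proof is thus to identify $\Pts(-)$ with the first composite and $\Fun(-)$ with the second, on objects, $1$-cells and $2$-cells.

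First I would identify $\Pts(-)$ with $\Reg(-,\Set)\circ\reg{(-)}$. On objects this is the equivalence $\Pts(\Sh{\B}{\reg\T})\simeq\Reg(\B,\Set)$ recorded just before the statement, applied with $\B=\reg\E$: since every regular topos is recovered from its regular objects as $\E\simeq\Sh{\reg\E}{\reg\T}$ (Theorem~\ref{EXREGDual}), this reads $\Pts(\E)\simeq\Reg(\reg\E,\Set)$. On $1$-cells, a regular geometric morphism $e:\E_1\to\E_2$ corresponds under Theorem~\ref{EXREGDual} to the regular functor $\reg\E_2\to\reg\E_1$ obtained by restricting the inverse image $e^*$, and $\Pts(e)$ is precomposition with this functor, which is exactly its image under $\Reg(-,\Set)$; $2$-cells are matched by the corresponding whiskerings.

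Next I would identify $\Fun(-)$ with $\Sh{-}{\reg\T}\circ(-,\Set)^{\prod\to}$. On objects, Theorem~\ref{funcind} gives $\fun(\LL)=\reg{\Fun(\LL)}\simeq\LL^+$; since $\Fun(\LL)=\Sh{\fp\K^{op}}{\J}$ is a regular topos, Theorem~\ref{EXREGDual} reconstructs it from its regular objects as $\Fun(\LL)\simeq\Sh{\LL^+}{\reg\T}$, which is precisely the value of the composite at $\LL$. On $1$-cells, an interpretation functor $F:\LL_1\to\LL_2$ induces the regular functor $-\circ F:\LL_2^+\to\LL_1^+$ (the action of $(-,\Set)^{\prod\to}$), and $\Fun(F)$ is the regular geometric morphism assigned to it by $\Sh{-}{\reg\T}$; again $2$-cells are handled by whiskering.

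Finally I would assemble the identifications. Since $\Reg(-,\Set)$ and $(-,\Set)^{\prod\to}$ are mutually quasi-inverse (Theorem~\ref{EXDEFDual}) and $\reg{(-)}$ and $\Sh{-}{\reg\T}$ are mutually quasi-inverse (Theorem~\ref{EXREGDual}), the round trips $\Pts(-)\circ\Fun(-)$ and $\Fun(-)\circ\Pts(-)$ are $2$-naturally isomorphic to $\Id_{\DEF}$ and $\Id_{\REG}$; concretely these reduce to the equivalences $\LL\simeq\Reg(\LL^+,\Set)$ of Theorem~\ref{DefExDef} and $\B\simeq\Reg(\B,\Set)^+$ of Theorem~\ref{ExdefIsDef}. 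The main obstacle I anticipate is the purely $2$-categorical bookkeeping: verifying that the object-level equivalences above are pseudonatural in $\E$ and in $\LL$ and that they intertwine the actions on $1$- and $2$-cells coherently, so that one obtains equivalences of $2$-functors and not merely objectwise equivalences. Because precomposition and the comparison- and points-lemmas being invoked are strict on the nose, this coherence should follow formally, but it is where the care is needed.
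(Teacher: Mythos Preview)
Your proposal is correct and is essentially the approach the paper has in mind: the paper omits the proof entirely, saying only that it ``relies on Theorem~\ref{funcind}, which can be restated as the commutativity of the triangle'' in Figure~\eqref{NonAddPic}, i.e., exactly your identification of $\Fun(-)$ and $\Pts(-)$ with the composites through $\EX^{op}$ of the two established $2$-equivalences. Your write-up in fact supplies more detail than the paper does, including the coherence bookkeeping on $1$- and $2$-cells that the paper leaves implicit.
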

The easy and omitted proof relies on Theorem \ref{funcind}, which can be restated as the commutativity of the triangle in Figure \eqref{NonAddPic} of three $2$-equivalences (filling up the details in Figure \eqref{Scheme}). Compare with Figure \eqref{AddPic}.
\begin{figure}[h]
\centering
\begin{tikzpicture}
  \matrix (m) [matrix of math nodes,row sep=5em,column sep=4em,minimum width=8em]
  {\EX^{op} & \ & \DEF \\ \ & \REG & \ \\};
  \path[-stealth]
    (m-1-1) edge[transform canvas={yshift=0.4ex}] node [above] {$\EX(-,\Set)=\Reg(-,\Set)$} (m-1-3)
    (m-1-3) edge[transform canvas={yshift=-0.4ex}] node [below] {$\DEF(-,\Set)=(-,\Set)^{\prod\to}$} (m-1-1)
    	(m-1-3)	edge[transform canvas={yshift=0.4ex}] node [sloped, above]{$\Fun(-)$} (m-2-2)
    	(m-2-2) edge[transform canvas={yshift=-0.4ex}] node [sloped,below] {$\Pts(-)$} (m-1-3)
    	(m-1-1)	edge[transform canvas={yshift=0.4ex}] node [sloped, above] {$\Sh-{\reg\J}$} (m-2-2)
    	(m-2-2) edge[transform canvas={yshift=-0.4ex}] node [sloped,below] {$\reg{(-)}$} (m-1-1);
\end{tikzpicture}
\caption{The non-additive regular case: detailed}\label{NonAddPic}
\end{figure}

Let us look at the special case of this picture to understand how (the categories of) covariant and contravariant Hom functors interact with each other. Let $\C$ be a small lex category. Then its exact completion, $\C_{ex/lex}$, has the contravariant functor category, $(\C^{op},\Set)$, as the classifying topos whereas the category of covariant lex functors, $\Lex(\C,\Set)$, forms the corresponding definable category. Thus the functors $\Fun(-)$ and $\Pts(-)$ map the categories of covariant and contravariant functors to each other.

Now we make the connection of finitary regular logic to the above triangle more precise.
\begin{definition}\cite{Barr}\label{regtheory}
A regular theory is a theory consisting only of the sentences of the form $\forall\overline{x}(\psi(\overline{x})\rightarrow\phi(\overline{x}))$ where $\phi$ and $\psi$ are regular formulas in the infinitary logic $L_\infty$ (i.e., formulas constructed only using truth, conjunctions and existential quantifiers of appropriate arities).
\end{definition}

A full subcategory $\LL$ in a locally presentable category $\K$ is a small injectivity class iff it is axiomatizable by a regular theory in some logic $L_\lambda$ for a regular cardinal $\lambda$ \cite[Exercise~5.e]{AR}. Inspecting the proof of this statement one can get a cardinal specific version for each regular cardinal $\lambda$. In particular, a definable category, which is a finite-injectivity class in a locally finitely presentable category, is axiomatized by a regular theory in the finitary logic $L_\omega$.

\begin{pro}\cite[Lemma~D1.3.8,\,Proposition~D1.3.10(ii)]{Ele}
Any regular formula is equivalent a pp-formula. Thus any regular theory is equivalently given by the sentences of the form $\forall\overline{x}(\theta(\overline{x})\rightarrow(\exists\overline{y})\theta'(\overline{x}\ \overline{y}))$ where $\theta$ and $\theta'$ are conjunctions of atomic formulas, and we may additionally assume that $\models\theta'(\overline{x}\ \overline{y}) \rightarrow\theta(\overline{x})$.
\end{pro}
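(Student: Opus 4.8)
The plan is to establish the two assertions separately: first that every regular formula is provably equivalent to a pp-formula, and then to use this normal form to rewrite the axioms of a regular theory into the desired shape.

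For the first assertion, I would argue by structural induction on the regular formula, using that regular formulas are generated from atomic formulas and $\top$ by finite conjunction and existential quantification. The base cases are immediate: an atomic formula is already pp (with an empty existential block), and $\top$ is the empty conjunction. For the conjunction step, given two pp-formulas $\exists\overline{y_1}\,\theta_1(\overline{x}\,\overline{y_1})$ and $\exists\overline{y_2}\,\theta_2(\overline{x}\,\overline{y_2})$, I would first rename the bound variables so that $\overline{y_1}$ and $\overline{y_2}$ are disjoint and disjoint from $\overline{x}$, and then invoke the Frobenius rule of regular logic to pull the existentials outside the conjunction, obtaining $\exists\overline{y_1}\,\overline{y_2}\,(\theta_1\wedge\theta_2)$, which is again pp. For the existential step, nested existential quantifiers simply merge into a single block, so the result stays pp. This gives the content of Lemma~D1.3.8.

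For the second assertion, I would start from an axiom $\psi\vdash_{\overline{x}}\phi$ of the regular theory and replace $\psi$ and $\phi$ by their pp-normal forms, say $\psi\equiv\exists\overline{z}\,\theta_0(\overline{x}\,\overline{z})$ and $\phi\equiv\exists\overline{y}\,\theta'(\overline{x}\,\overline{y})$. The key step is to absorb the existential variables $\overline{z}$ of the antecedent into the context using the adjunction between existential quantification and weakening: since $\overline{z}$ is not free in $\phi$, the sequent $\exists\overline{z}\,\theta_0\vdash_{\overline{x}}\phi$ is interderivable with $\theta_0\vdash_{\overline{x}\,\overline{z}}\phi$. Renaming the enlarged context $\overline{x}\,\overline{z}$ as a fresh $\overline{x}$ and writing $\theta:=\theta_0$, the axiom now has the required shape $\forall\overline{x}(\theta(\overline{x})\rightarrow\exists\overline{y}\,\theta'(\overline{x}\,\overline{y}))$ with $\theta$ and $\theta'$ both conjunctions of atomic formulas.

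Finally, to secure the extra condition $\models\theta'(\overline{x}\,\overline{y})\rightarrow\theta(\overline{x})$, I would conjoin the antecedent to the matrix of the consequent, replacing $\theta'$ by $\theta\wedge\theta'$; this is still a conjunction of atomics and visibly implies $\theta$. The substitution leaves the axiom unchanged because, as $\theta$ does not involve $\overline{y}$, the Frobenius rule gives $\exists\overline{y}\,(\theta\wedge\theta')\dashv\vdash\theta\wedge\exists\overline{y}\,\theta'$, so the modified sequent $\theta\vdash_{\overline{x}}\exists\overline{y}\,(\theta\wedge\theta')$ reduces to $\theta\vdash_{\overline{x}}\theta\wedge\exists\overline{y}\,\theta'$, i.e. to the trivial $\theta\vdash_{\overline{x}}\theta$ together with the original $\theta\vdash_{\overline{x}}\exists\overline{y}\,\theta'$. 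I expect the main subtlety to lie not in any single step but in the careful bookkeeping of variable contexts---avoiding capture when pulling existentials out, and correctly applying the existential--weakening adjunction---rather than in any deep logical content, since every equivalence used is among the standard structural and logical rules of regular logic.
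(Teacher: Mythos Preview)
Your argument is correct and is essentially the standard one; note, however, that the paper does not supply its own proof of this proposition but simply cites \cite[Lemma~D1.3.8,\,Proposition~D1.3.10(ii)]{Ele}, and your structural induction together with the existential--weakening adjunction and the Frobenius trick for the final clause is precisely the content of those references.
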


To compare with Definition \ref{DefAddDef} of a definable additive category, we see that non-additive definable categories also admit a presentation as the category of models of a theory given by $pp$-pairs.

\section{Future directions}
In a future work we plan to address the `coherent' case where small exact categories are replaced by small pretoposes. The strong conceptual completeness theorem for coherent logic (equivalently, for full first-order logic) was proved by Makkai in \cite{MakStone}, and uses the theory of ultracategories. The corresponding analogue of definable categories would be subcategories of locally finitely presentable categories closed under ultraproducts, directed colimits and pure subobjects.

It is natural to ask whether the syntax-semantics duality for regular logic is entirely `logical' in nature in the sense that it does not depend on the base category. More specifically, it would be useful to replace $\Set$ or $\Ab$ by a sufficiently nice monoidal category $\V$, and a functor by a $\V$-enriched functor. Following \cite{BQ}, we may ask for $\V$ to be locally finitely presentable, exact, and symmetric monoidal closed. Such class of monoidal categories clearly includes $\Set$, $\Ab$, presheaf toposes and module categories.

There are many tools exclusive to the additive case. For example, the fact that the dual category of an abelian category is abelian. Such a statement is not true for small exact categories. The duality $Ab(\A^{op})\simeq(Ab(\A))^{op}$ \cite[Theorem~4.5]{PreAMS} due to Gruson-Jensen, and independently to Auslander, for a small preadditive category $\A$ was crucial in the original proofs of the $2$-duality $\ABEX^{op}\simeq\DEFA$. This provides an internal duality at each vertex of the triangle given in Figure \eqref{AddPic}. The theory of tensor products is also well developed on the additive side.

We believe that the duality for small preadditive categories is more fundamental than the duality for abelian categories. The non-additive analogue of the former duality is simply the statement that the dual of a small category is again one such, which is true. In a future work, we also plan to explore duality and tensor products in the regular non-additive case.

\end{document}